\def\dec{\mathrm{dec}}
\newcommand{\p}{\mathbb{P}}
\newcommand{\N}{\mathbb{N}}
\newcommand{\E}{\mathbb{E}}
\newcommand{\Var}{\mathrm{Var}}
\newcommand{\R}{\mathbb{R}}
\newcommand{\degree}{{\rm deg}}
\newtheorem{theorem}{Theorem}
\newtheorem{lemma}{Lemma}
\newtheorem{cor}{Corollary}
\newtheorem{defi}{Definition}
\newcommand{\loglog}{{\rm loglog\,}}
\newcommand{\ii}{{\rm i}}
\newcommand{\jj}{{\rm j}}
\newcommand*{\ind}[1]{\mathbf{1}_{\{#1\}}}
\newcommand*{\Ind}[1]{\mathbf{1}_{#1}}
\newcommand{\xd}{\mathbf{X}^{\rm \dec}}
\newcommand{\xdi}{\mathbf{X}_\ii^{\rm dec}}
\newcommand{\xdij}{\mathbf{X}_{\ii_{J_j}}^{\rm dec}}
\newcommand{\xdik}{\mathbf{X}_{\ii_{K}}^{\rm dec}}
\newcommand{\xdii}{\mathbf{X}_{\ii_{I}}^{\rm dec}}
\newcommand{\xdiic}{\mathbf{X}_{\ii_{I^c}}^{\rm dec}}
\newcommand{\xdk}{\mathbf{X}_K}
\newcommand{\xdjz}{\mathbf{X}_{J_0}}
\newcommand{\edi}{\mathbf{\epsilon}_\ii^{\rm dec}}
\newcommand{\xui}{\mathbf{X}_\ii}
\newcommand{\eui}{\mathbf{\epsilon}_\ii}
\newcommand{\nodiag}[1]{I_#1^d}
\newcommand{\llog}{{\rm LL}}
\newcommand*{\nomax}[1]{\diamond#1}
\newcommand{\kimax}{{\rm k}_{I}}
\newcommand{\kiimax}{{\rm k}_{(J_i\backslash J)^c}}
\newcommand{\kijmax}{{\rm k}_{(J_j\backslash J)^c}}
\title{The LIL for $U$-statistics in Hilbert spaces}
\author{{Rados{\l}aw Adamczak}\thanks{Institute of Mathematics, Polish Academy of Sciences, Warsaw, Poland.
Email: R.Adamczak@impan.gov.pl. Research partially supported by
MEiN Grant 2 PO3A 019 30.}\quad {Rafa{\l}
Lata{\l}a}\thanks{Institute of Mathematics, Warsaw University,
Warsaw, Poland. Email: RLatala@mimuw.edu.pl. Research partially
supported by MEiN Grant 1 PO3A 012 29.}}
\begin{document}
\maketitle

\begin{abstract}
We give necessary and sufficient conditions for the (bounded) law
of the iterated logarithm for $U$-statistics in Hilbert spaces. As
a tool we also develop moment and tail estimates for canonical
Hilbert-space valued $U$-statistics of arbitrary order, which are
of independent interest.

\noindent Keywords: $U$-statistics, law of the iterated logarithm,
tail and moment estimates.

\noindent AMS 2000 Subject Classification: Primary: 60F15,
Secondary: 60E15
\end{abstract}

%\begin{keyword}[class=AMS]
%\kwd{60E15.}
%\end{keyword}

%\begin{keyword}
%\kwd{$U$-statistics} \kwd{Law of the Iterated Logarithm}
%\end{keyword}
%\end{frontmatter}

\section{Introduction}
In the last two decades we have witnessed a rapid development in the asymptotic theory of $U$-statistics, boosted
by the introduction of the so called 'decoupling' techniques (see \cite{dlp2, dlp1,dlp3}), which allow to treat $U$-statistics conditionally
as sums of independent random variables. This approach yielded better understanding of $U$-statistics versions of the classical limit theorems
of probability. Necessary and sufficient conditions were found for the strong law of large numbers \cite{LZ}, the central limit theorem \cite{RV,GZ2}
and the law of the iterated logarithm \cite{GLKZ,AmcLat}. Also some sharp exponential inequalities for canonical $U$-statistics
have been found \cite{GLZ, Amc, H}. Analysis of the aforementioned results shows an interesting phenomenon. Namely, the
natural counterparts of the necessary and sufficient conditions for sums of i.i.d. random variables ($U$-statistics of degree 1), remain sufficient
for $U$-statistics of arbitrary degree, but with an exception for the CLT, they cease to be necessary. The correct conditions turn out to be much
more involved and are expressed for instance in terms of convergence of some series (LLN) or as growth conditions for some functions (LIL).

A natural problem is an extension of the above results to the infinite-dimensional setting. There has been some progress in this direction, and partial answers have
been found, usually under the assumption on the geometrical structure of the space in which the values of a $U$-statistic are taken. In general however the picture
is far from being complete and the necessary and sufficient conditions are known only in the case of the CLT for Hilbert space valued $U$-statistics (see \cite{dlp2, GZ2}
for the proof of sufficiency in type 2 spaces and necessity in cotype 2 spaces respectively).

In this article we generalize to separable Hilbert spaces the results from \cite{AmcLat} on necessary and sufficient conditions for the LIL for
real valued $U$-statistics. The conditions are expressed only in terms of the $U$-statistic kernel and the distribution of the underlying i.i.d.
sequence and can be also considered a generalization of results from \cite{GKZ}, where the LIL for i.i.d. sums in Hilbert spaces was characterized.
We consider only the bounded version of the LIL and do not give the exact value of the $\limsup$ nor determine the limiting set.
Except for the classical case of sums of i.i.d. random variables, the problem of finding the $\limsup$ is at the moment
open even in the one dimensional case (see \cite{AG1, dlp2, KLOZ} for some partial results)
and the problem of the geometry of the limiting set and the compact LIL is solved only under suboptimal integrability conditions \cite{AG1}.

The organization of the paper is as follows. First, in Section \ref{nierownosci} we prove sharp exponential inequalities
for canonical $U$-statistics, which generalize the results of \cite{Amc, GLZ} for the real-valued case. Then, after recalling
some basic facts about the LIL we give necessary and sufficient condition for the LIL for decoupled, canonical $U$-statistics (Theorem \ref{decoupled_lil_theorem}).
The quite involved proof is given in the two subsequent sections. Finally we conclude with our main result (Theorem \ref{undecoupled_lil_theorem}), which gives a characterization
of the LIL for undecoupled $U$-statistics and follows quite easily from Theorem \ref{decoupled_lil_theorem} and
the one dimensional result.

\section{Notation}
For an integer $d$, let $(X_i)_{i\in \N}$, $(X_i^{(k)})_{i\in \N,
1 \le k \le d}$ be independent random variables with values in a Polish
space $\Sigma$, equipped with the Borel $\sigma$-field
$\mathcal{F}$. Let also $(\varepsilon_i)_{i\in \N}$, $(\varepsilon_i^{(k)})_{i\in \N,
1 \le k \le d}$ be independent Rademacher variables, independent of $(X_i)_{i\in \N}$, $(X_i^{(k)})_{i\in \N,
1 \le k \le d}$.

Consider moreover measurable functions $h_\ii\colon
\Sigma^d \to H$, where $(H,|\cdot|)$ is a separable Hilbert space (we will denote both
the norm in $H$ and the absolute value of a real number by $|\cdot|$, the context will however
prevent ambiguity).

To shorten the notation, we will use the following convention. For
$\ii = (i_1,\ldots,i_d) \in \{1,\ldots,n\}^d$ we will write $\xui$
(resp. $\xdi$) for $(X_{i_1},\ldots,X_{i_d})$, (resp.
$(X_{i_1}^{(1)},\ldots,X_{i_d}^{(d)})$) and $\eui$ (resp. $\edi$)
for the product
$\varepsilon_{i_1}\cdot\ldots\cdot\varepsilon_{i_d}$ (resp.
$\varepsilon_{i_1}^{(1)}\cdot\ldots\cdot\varepsilon_{i_d}^{(d)}$),
the notation being thus slightly inconsistent, which however
should not lead to a misunderstanding. The $U$-statistics will
therefore be denoted
\begin{align*}
&\sum_{\ii \in \nodiag{n}} h_\ii(\xui) &\textrm{(an undecoupled $U$-statistic)}\\
&\sum_{|\ii| \le n} h_\ii(\xdi) &\textrm{(a decoupled $U$-statistic)}\\
&\sum_{\ii \in \nodiag{n}} \eui h_\ii(\xui) &\textrm{(an undecoupled randomized $U$-statistic)}\\
&\sum_{|\ii| \le n} \edi h_\ii(\xdi) &\textrm{(a decoupled randomized
$U$-statistic)},
\end{align*}
where %%$\ii = (i_1,\ldots,i_d)$ and
\begin{align*}
|\ii| & = \max_{k=1,\ldots,d} i_k,\\
\nodiag{n} &= \{\ii\colon |\ii|\le n, i_j \neq i_k\;\textrm{for}\;
j\neq k\}.
\end{align*}
Since in this notation $\{1,\ldots,d\} = I_d^1$ we will write
\begin{displaymath}
I_d = \{1,2,\ldots,d\}.
\end{displaymath}

Throughout the article we will write $L_d, L$ to denote
constants depending only on $d$ and
universal constants respectively. In all those cases the values of
a constant may differ at each occurrence.

For $I \subseteq I_d$, we will write $\E_I$ to denote integration
with respect to variables $(X_{i}^{(j)})_{i\in \N,j\in I}$. We
will consider mainly canonical (or completely degenerated)
kernels, i.e. kernels $h_\ii$, such that for all $j \in I_d$,
$\E_j h_\ii(\xdi) = 0$ a.s.

\section{Moment inequalities for $U$-statistics in Hilbert space \label{nierownosci}}

In this section we will present sharp moment and tail inequalities
for Hilbert space valued $U$-statistics, which in the sequel will
constitute an important ingredient in the analysis of the LIL.
These estimates are a natural generalization of inequalities for
real valued U-statistics presented in \cite{Amc}.

Let us first introduce some definitions.

\begin{defi}
For a nonempty, finite set $I$ let $\mathcal{P}_{I}$ be the family
consisting of all partitions $\mathcal{J} = \{J_1,\ldots,J_k\}$ of
$I$ into nonempty, pairwise disjoint subsets. Let us also define
for $\mathcal{J}$ as above $\degree(\mathcal{J}) = k$.
Additionally let $\mathcal{P}_\emptyset = \{\emptyset\}$ with
$\degree(\emptyset) = 0$.
\end{defi}

\begin{defi}\label{deterministic_norms}
For a nonempty set $I \subseteq I_d$ consider $\mathcal{J} =
\{J_1,\ldots,J_k\} \in \mathcal{P}_{I}$. For an array
$(h_\ii)_{\ii \in I_n^d}$ of H-valued kernels and fixed value of
$\ii_{I^c}$, define
\begin{align*}
\|(h_\ii)_{\ii_{I}}\|_\mathcal{J} = \sup\Big\{&\Big|\sum_{\ii_{I}}\E_I[
h_{\ii}(\xdi)\prod_{j=1}^{\degree(\mathcal{J})}
f_{\ii_{J_j}}^{(j)}(\xdij)]\Big| \colon f_{\ii_{J_j}}^{(j)}\colon \Sigma^{J_j} \to \R\\
&\E \sum_{\ii_{J_j}} |f_{\ii_{J_j}}^{(j)}(\xdij)|^2 \le 1 \; {\rm
for }\; j = 1,\ldots,\degree(\mathcal{J})\Big\}.
\end{align*}
Let moreover $\|(h_\ii)_{\ii_\emptyset}\|_{\emptyset}= |h_\ii|$.
\end{defi}

\paragraph{Remark} It is worth mentioning that for $I = I_d$,
$\|\cdot\|_\mathcal{J}$ is a deterministic norm, whereas for
$I\subsetneq I_d$ it is a random variable, depending on
$\mathbf{X}_{\ii_{I^c}}^{\rm dec}$.

Quantities given by the above definition suffice to obtain
precise moment estimates for real valued $U$-statistics.
However, to bound the moments of
$U$-statistics with values in general Hilbert spaces, we will need
to introduce one more definition.

\begin{defi}\label{determnistic_norms_Hilbert}
For nonempty sets $K\subseteq I \subseteq I_d$ consider
$\mathcal{J} = \{J_1,\ldots,J_k\} \in \mathcal{P}_{I\backslash
K}$. For an array $(h_\ii)_{\ii \in I_n^d}$ of H-valued kernels
and fixed value of $\ii_{I^c}$, define
\begin{align*}
\|(h_\ii)_{\ii_{I}}\|_{K,\mathcal{J}} =
\sup\Big\{&|\sum_{\ii_{I}}\E_I [\langle
h_{\ii}(\xdi),g_{\ii_K}(\xdik)\rangle\prod_{j=1}^{\degree(\mathcal{J})}
f_{\ii_{J_j}}^{(j)}(\xdij)]| \colon \\
&f_{\ii_{J_j}}^{(j)} \colon \Sigma^{J_j} \to \R,\; g_{\ii_K}\colon
\Sigma^K \to H\;,\E \sum_{\ii_{K}}|g_{\ii_K}(\xdik)|^2 \le 1\\
&\E \sum_{\ii_{J_j}} |f_{\ii_{J_j}}^{(j)}(\xdij)|^2 \le 1 \; {\rm
for }\; j = 1,\ldots,\degree(\mathcal{J})
\Big\}.
\end{align*}
\end{defi}

\paragraph{Remark} One can see that the only difference between
the above definition and Definition \ref{deterministic_norms} is
that the latter distinguishes one set of coordinates and allows
functions corresponding to this set to take values in $H$.
Moreover, since the norm in $H$ satisfies $|\cdot| =
\sup_{|\phi|\le 1}\langle\phi,\cdot\rangle$, we can treat
Definition \ref{deterministic_norms} as a counterpart of
Definition \ref{determnistic_norms_Hilbert} for $K = \emptyset$.
We will use this convention to simplify the statements of the
subsequent theorems. Thus, from now on, we will write
\begin{displaymath}
\|\cdot\|_{\emptyset,\mathcal{J}} := \|\cdot\|_{\mathcal{J}}.
\end{displaymath}

\paragraph{Example} For $d=2$ and $I = \{1,2\}$, the above definition gives
\begin{align*}
\|(h_{ij}(X_i,Y_j))_{i,j}\|_{\emptyset,\{\{1,2\}\}} & = \sup\Big\{\Big|\E \sum_{ij} h_{ij}(X_i,Y_j)f_{ij}(X_i,Y_j)\Big|\colon \\
&\phantom{xxxxxx} \E \sum_{ij} f(X_i,Y_j)^2\le 1\Big\}\\
&=\sup_{\phi\in H,|\phi|\le 1}\sqrt{\E \sum_{ij} \langle \phi,h_{ij}(X_i,Y_j)\rangle^2},\\
\|(h_{ij}(X_i,Y_j))_{i,j}\|_{\emptyset, \{\{1\}\{2\}\}} & = \sup\Big\{\Big|\E \sum_{ij} h_{ij}(X_i,Y_j)f_i(X_i)g_j(Y_j)\Big|\colon \\
&\phantom{xxxxxx}  \sum_i \E f(X_i)^2, \sum_j \E g(Y_j)^2\le 1\Big\},\\
\|(h_{ij}(X_i,Y_j))_{i,j}\|_{\{1\},\{\{2\}\}} & = \sup\Big\{\E \sum_{ij} \langle f_i(X_i),h_{ij}(X_i,Y_j)\rangle g_j(Y_j)\colon \\
&\phantom{xxxxxx} \E \sum_i |f(X_i)|^2,\E \sum_j g(Y_j)^2\le 1\Big\},\\
\|(h_{ij}(X_i,Y_j))_{i,j}\|_{\{1,2\},\emptyset} & = \sup\Big\{\E \sum_{ij}\langle f_{ij}(X_i,Y_j),h_{ij}(X_i,Y_j)\rangle\colon\\
&\phantom{xxxxxx} \E \sum_{ij} |f(X_i,Y_j)|^2\le 1\Big\}\\
& = \sqrt{\sum_{i,j}\E|h_{ij}(X_i,Y_j)|^2}.
\end{align*}

We can now present the main result of this section.

\begin{theorem}\label{moment_estimates} For any array of $H$-valued,
completely degenerate kernels $(h_\ii)_\ii$ and any $p \ge 2$, we
have
\begin{align*}
\E\big|\sum_\ii h(\xdi)\big|^p \le L_d^p\Big(\sum_{K\subseteq I\subseteq
I_d}\sum_{\mathcal{J}\in \mathcal{P}_{I\backslash K}} p^{p(\#I^c +
\deg\mathcal{J}/2)}\E_{I^c}\max_{\ii_{I^c}}\|(h_\ii)_{\ii_I}\|_{K,\mathcal{J}}^p\Big).
\end{align*}
\end{theorem}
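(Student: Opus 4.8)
The plan is to establish this upper bound by induction on the order $d$, after a preliminary symmetrization. Since the kernels $(h_\ii)_\ii$ are completely degenerate, the usual randomization inequalities for decoupled $U$-statistics give $\E|\sum_\ii h_\ii(\xdi)|^p\le L_d^p\,\E|\sum_\ii\edi h_\ii(\xdi)|^p$, so it suffices to bound the fully randomized statistic; one may also assume that only finitely many of the $h_\ii$ are nonzero.

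For the base case $d=1$ one identifies $H$-valued sums with suprema of empirical processes: $|\sum_i\varepsilon_ih_i(X_i)|=\sup_{|\phi|\le1}\sum_i\varepsilon_i\langle\phi,h_i(X_i)\rangle$. The moment form of Talagrand's concentration inequality for empirical processes then yields $\|\sum_i\varepsilon_ih_i(X_i)\|_p\le L\bigl(\sigma_s+\sqrt{p}\,\sigma_w+p\,(\E\max_i|h_i(X_i)|^p)^{1/p}\bigr)$ with $\sigma_s=(\sum_i\E|h_i(X_i)|^2)^{1/2}$ and $\sigma_w=\sup_{|\phi|\le1}(\sum_i\E\langle\phi,h_i(X_i)\rangle^2)^{1/2}$. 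Raising to the $p$-th power reproduces exactly the claimed inequality for $d=1$: here $\sigma_s=\|(h_i)\|_{\{1\},\emptyset}$, $\sigma_w=\|(h_i)\|_{\emptyset,\{\{1\}\}}$, and $(\E\max_i|h_i(X_i)|^p)^{1/p}$ is the $L^p$-norm of $\max_i\|(h_i)_{\ii_\emptyset}\|_{\emptyset,\emptyset}$, these being precisely the three pairs $(K,\mathcal{J})$ available when $d=1$.

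For the inductive step, assume the inequality for all orders $<d$, fix $p$, and condition on $X^{(1)},\dots,X^{(d-1)}$ together with all Rademacher variables except $(\varepsilon_i^{(d)})_i$. What remains is a sum $\sum_{i_d}\varepsilon_{i_d}^{(d)}W_{i_d}$ of conditionally independent, conditionally symmetric $H$-valued summands, where, writing $\ii'=(i_1,\dots,i_{d-1})$, $W_{i_d}=\sum_{\ii'}\varepsilon_{i_1}^{(1)}\cdots\varepsilon_{i_{d-1}}^{(d-1)}h_{(\ii',i_d)}(\cdot)$ is---once $X_{i_d}^{(d)}$ is frozen---a randomized decoupled $U$-statistic of order $d-1$ in the coordinates $1,\dots,d-1$ whose kernel is still completely degenerate there. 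Applying the case $d=1$ conditionally and then taking the full expectation bounds the quantity we want by a sum of three groups of terms: a term $p^p\,\E\max_{i_d}|W_{i_d}|^p$ (which will feed the terms of the statement with $d\in I^c$), a term $p^{p/2}\,\E[\sup_{|\phi|\le1}(\sum_{i_d}\E_d\langle\phi,W_{i_d}\rangle^2)^{p/2}]$ (feeding the terms with $\{d\}$ a singleton block of $\mathcal{J}$), and a term $\E[(\sum_{i_d}\E_d|W_{i_d}|^2)^{p/2}]$ (feeding the terms with $d\in K$). In each group one rewrites the inner expression as, or bounds it by, the appropriate norm of a completely degenerate $U$-statistic of order $d-1$ built from the original kernel---directly in the first group, and in the other two after expanding $|\cdot|^2$ resp.\ the inner product, applying the Cauchy--Schwarz inequality and, wherever a further Rademacher average in the $d$-th coordinate is produced, the Khinchine--Kahane inequality in $H$---and then invokes the induction hypothesis. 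A combinatorial lemma finally matches each resulting norm $\|\cdot\|_{K',\mathcal{J}'}$ of the order-$(d-1)$ kernel with a norm $\|(h_\ii)_{\ii_I}\|_{K,\mathcal{J}}$ of the original kernel for an enlarged coordinate set, verifying that the extra factor $p^{p}$, $p^{p/2}$ or $1$ coming from the $d$-th coordinate combines with $p^{p(\#(I_{d-1}\setminus I')+\deg\mathcal{J}'/2)}$ to give exactly $p^{p(\#I^c+\deg\mathcal{J}/2)}$.

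The principal obstacle is the first group. Because the terms with $d\in I^c$ require the variables $X_{i_d}^{(d)}$ to be kept rather than averaged, one cannot interchange $\max_{i_d}$ with the expectation over $X^{(1)},\dots,X^{(d-1)}$, and the $W_{i_d}$ are far from independent over $i_d$ in that expectation; controlling $\E\max_{i_d}|W_{i_d}|^p$ by the quantities $\E_d\max_{i_d}\|(h_\ii)_{\ii_I}\|_{K,\mathcal{J}}^p$ that appear on the right-hand side---without losing a power of $p$---is the technical heart of the argument. I would handle this by first proving a cruder two-term moment (hence tail) bound for lower-order $U$-statistics and then bootstrapping, feeding those tail estimates into a union bound over $i_d$, or alternatively via a Hoffmann--J{\o}rgensen-type decoupling of the outer maximum. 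The remaining difficulty is purely combinatorial: tracking the distinguished block $K$---the feature absent from the real-valued theory of \cite{Amc}, where the $H$-valued charge is always tested against a fixed unit vector---through the recursion, since here it may instead be absorbed into an entire block of coordinates, and checking that its interplay with the partitions $\mathcal{J}$ yields precisely the index sets and powers of $p$ displayed in the theorem.
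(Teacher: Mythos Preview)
Your inductive scheme reverses the paper's order of operations, and this creates a genuine gap beyond the one you already flag. You apply the $d=1$ bound to the $d$-th coordinate first and hope to feed the resulting three groups into the induction hypothesis on $W_{i_d}$. The paper does the opposite: it freezes all of $(X_{i_d}^{(d)})_{i_d}$, applies the induction hypothesis directly to the $(d-1)$-order array with kernels $(\sum_{i_d} h_\ii)_{\ii_{I_{d-1}}}$, and only afterwards peels off the $d$-th coordinate via Lemma~\ref{Talagrand}.

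The reason this matters is that your second and third groups are not $L^p$-norms of lower-order $U$-statistics at all; they are $L^{p/2}$-norms of \emph{quadratic} functionals of the $W_{i_d}$, and the induction hypothesis gives no handle on those. Concretely, $\sum_{i_d}\E_d|W_{i_d}|^2=\E_{d,\varepsilon^{(d)}}\big|\sum_\ii\edi h_\ii\big|^2$, so by Jensen your third group is bounded above by the very $p$-th moment you are trying to estimate --- the argument is circular. The weak-variance group is no better: the supremum over $\phi$ sits inside the $L^{p/2}$-norm and ties all $d-1$ remaining coordinates together, so ``expand, Cauchy--Schwarz, Khinchine--Kahane, then induction'' does not reduce it to a $(d-1)$-order $U$-statistic in any obvious way.

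The paper's route bypasses all of this. After induction and Lemma~\ref{Talagrand} one is left with the scalar quantity $\E_d\|(\sum_{i_d}h_\ii)_{\ii_I}\|_{K,\mathcal{J}}$ for each pair $K\subseteq I\subseteq I_{d-1}$, $\mathcal{J}\in\mathcal{P}_{I\setminus K}$. This is handled by Lemma~\ref{crucial_lemma}, which recasts the $(K,\mathcal{J})$-norm as a multilinear operator norm on a tensor product $H_0\otimes\cdots\otimes H_k$ of Hilbert spaces (with $H_0$ carrying the $H$-valued charge); summing over $i_d$ then becomes a sum of independent random multilinear forms, and the real-valued operator-norm estimate of Lemma~\ref{random_multimatrix} applies verbatim after this change of basis. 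That lemma is the technical heart you are missing --- it is precisely what produces the $(L,\mathcal{K})$-norms with $d$ joining $K$, joining a block of $\mathcal{J}$, or standing alone, together with the correct powers of $p$, without ever facing a quadratic functional. Finally, the maxima over $\ii_{I^c}$ are recovered from sums by Lemma~\ref{sumy_na_maxima}, not by bootstrapping or Hoffmann--J{\o}rgensen.
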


The proof of the above theorem proceeds along the lines of arguments presented in \cite{Amc,GLZ}. In particular we will need the following moment estimates for suprema of
empirical processes \cite{GLZ}.
\begin{lemma}[{\cite[Proposition 3.1]{GLZ}, see also \cite[Theorem 12]{BBLM}}]
\label{Talagrand} Let $X_1,\ldots,X_n$ be independent random
variables with values in $(\Sigma,\mathcal{F})$ and $\mathcal{T}$
be a countable class of measurable real functions on $\Sigma$,
such that for all $f \in \mathcal{T}$ and $i \in I_n$, $\E f(X_i)
= 0$ and $\E f(X_i)^2 < \infty$.  Consider the random variable $S
:= \sup_{f \in \mathcal{T}} |\sum_i f(X_i)|$. Then for all $p \ge
1$,
\begin{displaymath}
\E S^p \le L^p\left[ (\E S)^p + p^{p/2}\sigma^p +
p^p\E\max_i\sup_{f \in \mathcal{T}} |f(X_i)|^p\right],
\end{displaymath}
where
\begin{displaymath}
\sigma^2 = \sup_{f \in \mathcal{T}} \sum_i \E f(X_i)^2.
\end{displaymath}
\end{lemma}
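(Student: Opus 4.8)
The plan is to prove the bound by the entropy method of Boucheron, Bousquet, Lugosi and Massart (this is the route behind \cite[Theorem 12]{BBLM}). First I would reduce to a one-sided supremum: replacing $\mathcal{T}$ by the countable class $\mathcal{T}\cup(-\mathcal{T})$, which has the same $\sigma^2$ and the same value of $\E\max_i\sup_f|f(X_i)|^p$, we get $S=Z:=\sup_{f\in\mathcal{T}\cup(-\mathcal{T})}\sum_i f(X_i)$ with all kernels still centered. Since $S\ge 0$ we have $\E S^p\le 2^{p-1}\big((\E S)^p+\|(Z-\E Z)_+\|_p^p\big)$, so it suffices to bound $\|(Z-\E Z)_+\|_p$; I treat the main case $p\ge 2$, the range $1\le p<2$ being easier. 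Writing $M:=\max_i\sup_f|f(X_i)|$, if $\E M^p=\infty$ there is nothing to prove, and otherwise the crude bound $|Z|\le nM$ guarantees that every $L_p$-norm occurring below is finite, which will matter for an absorption step.

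The engine is the entropy-method moment inequality: if $Z=g(X_1,\dots,X_n)$, $X_i'$ is an independent copy of $X_i$, $Z_i'$ denotes $g$ with $X_i$ replaced by $X_i'$, and $V^+:=\E'\sum_i(Z-Z_i')_+^2$ with $\E'$ taken over the copies, then $\|(Z-\E Z)_+\|_q\le L\sqrt{q\,\|V^+\|_{q/2}}$ for all $q\ge 2$. Applying this to $Z=\sup_f\sum_i f(X_i)$, I would pick $f^*$ achieving (up to an arbitrarily small error) the supremum defining $Z$ — it depends on $(X_i)$ but not on the copies — and use $(Z-Z_i')_+\le(f^*(X_i)-f^*(X_i'))_+$. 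Taking $\E'$, using $\E f(X_i)=0$ for every $f$ in the class and the fact that $f^*$ is a \emph{single} fixed function once $(X_i)$ is given, one gets $\E'\big[(f^*(X_i)-f^*(X_i'))^2\mid X\big]=f^*(X_i)^2+\E f^*(X_i)^2$ together with $\sum_i\E f^*(X_i)^2\le\sup_f\sum_i\E f(X_i)^2=\sigma^2$. Hence, pointwise, $V^+\le\sigma^2+W$ with $W:=\sup_f\sum_i f(X_i)^2$, and therefore $\|(Z-\E Z)_+\|_p\le L\sqrt{p}\,\sigma+L\sqrt{p\,\|W\|_{p/2}}$.

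The crux is a self-bounding estimate for $W$. Write $W\le\sigma^2+\widetilde W$ with $\widetilde W:=\sup_f\sum_i\big(f(X_i)^2-\E f(X_i)^2\big)$, again a supremum of sums of independent centered summands, and symmetrize: $\|\widetilde W\|_{p/2}\le 2\big\|\sup_f|\sum_i\varepsilon_i f(X_i)^2|\big\|_{p/2}$. Conditionally on $(X_i)$, the maps $t\mapsto t^2$ are $2M$-Lipschitz on $[-M,M]$, which contains all the values $f(X_i)$, and vanish at $0$; the contraction principle applied with $F(x)=x^{p/2}$ then gives $\big\|\sup_f|\sum_i\varepsilon_i f(X_i)^2|\big\|_{L_{p/2}(\varepsilon)}\le 4M\,\big\|\sup_f|\sum_i\varepsilon_i f(X_i)|\big\|_{L_{p/2}(\varepsilon)}$ for each fixed $(X_i)$. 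Integrating in $X$, using H\"older's inequality and Jensen, and then the symmetrization inequality $\big\|\sup_f|\sum_i\varepsilon_i f(X_i)|\big\|_p\le 2\|Z\|_p$, one arrives at $\|W\|_{p/2}\le\sigma^2+L\|M\|_p\,\|Z\|_p$.

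It remains to close the loop. Combining the last two estimates, $\|Z\|_p\le \E S+L\sqrt{p}\,\sigma+L\sqrt{p\,\|M\|_p\,\|Z\|_p}$, and since $\|Z\|_p<\infty$ a priori the last term is absorbed by Young's inequality, giving $\|Z\|_p\le L\big(\E S+\sqrt{p}\,\sigma+p\,\|M\|_p\big)$. Feeding this into $\|V^+\|_{p/2}\le 2\sigma^2+L\|M\|_p\|Z\|_p$ and using Young's inequality once more turns $\|(Z-\E Z)_+\|_p^p\le L^p p^{p/2}\|V^+\|_{p/2}^{p/2}$ into $L^p\big((\E S)^p+p^{p/2}\sigma^p+p^p\,\E\max_i\sup_f|f(X_i)|^p\big)$, which together with $\E S^p\le 2^{p-1}\big((\E S)^p+\|(Z-\E Z)_+\|_p^p\big)$ is the assertion. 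I expect the self-bounding step for $W$ to be the main obstacle: the contraction principle must be invoked conditionally on $X$ because its Lipschitz constant $M$ is itself random, and the a priori finiteness of $\|Z\|_p$ has to be in place before the absorption; after that everything is routine bookkeeping with Young's inequality. (An alternative, closer to the original argument of Gin\'e, Lata\l a and Zinn behind \cite[Proposition 3.1]{GLZ}, is to combine Talagrand's concentration inequality in the bounded case with a truncation $f=f\ind{|f|\le u}+f\ind{|f|>u}$, re-centering both pieces; the unbounded tail is then controlled through $\sum_i\sup_f|f(X_i)|\ind{\sup_f|f(X_i)|>u}$, which introduces the $\E\max_i\sup_f|f(X_i)|^p$ term.)
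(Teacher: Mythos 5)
The paper itself contains no proof of Lemma \ref{Talagrand}: it is imported verbatim from \cite[Proposition 3.1]{GLZ} and \cite[Theorem 12]{BBLM} and is only ever applied later with $p\ge 2$. Your argument is, in effect, a correct reconstruction of the proof behind the second citation: the entropy-method moment bound $\|(Z-\E Z)_+\|_p\le L\sqrt{p\,\|V^+\|_{p/2}}$, the estimate $V^+\le \sigma^2+\sup_f\sum_i f(X_i)^2$ obtained from a near-maximizer $f^*$ together with the centering hypothesis, the symmetrization-plus-contraction step performed conditionally on $X$ (with the random Lipschitz constant $2M$, and the factor $4M$ you quote is the right one) yielding $\|\sup_f\sum_i f(X_i)^2\|_{p/2}\le \sigma^2+L\|M\|_p\|Z\|_p$, and the final absorption by Young's inequality, which you correctly protect with the a priori bound $|Z|\le nM$ when $\E M^p<\infty$. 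The parenthetical alternative you sketch (Talagrand's concentration inequality for the bounded part plus truncation and a Hoffmann--J{\o}rgensen-type control of the unbounded part) is precisely the route of \cite{GLZ}, so both of your options track the cited sources rather than anything in this paper. The one loose end is the claim that the range $1\le p<2$ is ``easier'': it does not follow from the $p=2$ case by Jensen, since $(\E M^2)^{p/2}$ cannot in general be dominated by a constant times $\E M^p$; one should instead invoke the Hoffmann--J{\o}rgensen moment inequality directly (which for $1\le p<2$ gives the statement even without the $\sigma$ term), or simply observe that the paper never uses the lemma below $p=2$. This is a minor omission, not a flaw in the main argument.
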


We will also need the following technical lemma.
\begin{lemma}[Lemma 5 in \cite{Amc}]\label{sumy_na_maxima}
For $\alpha > 0$ and arbitrary nonnegative kernels $g_\ii \colon
\Sigma^d \to \R_+$ and $p > 1$ we have
\begin{displaymath}
p^{\alpha p}\sum_{\ii} \E g_\ii^p \le L_d^p p^{\alpha
d}\left[p^{\alpha p}\E\max_{\ii} g_\ii^p + \sum_{I \subsetneq
\{1,\ldots,d\}} p^{\#I p}\E_I \max_{\ii_I}(\sum_{\ii_{I^c}}
\E_{I^c} g_\ii)^p \right].
\end{displaymath}
\end{lemma}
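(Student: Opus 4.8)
\emph{Proof strategy.} The plan is to distil the statement into one genuinely probabilistic one--variable inequality and then climb up to order $d$ by induction. Everything rests on the deterministic fact that for nonnegative reals $a_1,\dots,a_m$, any $p\ge 1$ and any $\alpha>0$,
\begin{equation}\label{eq:sketch-det}
p^{\alpha p}\sum_i a_i^p\le p^\alpha\Big(p^{\alpha p}\max_i a_i^p+\big(\textstyle\sum_i a_i\big)^p\Big),
\end{equation}
which follows from $\sum_i a_i^p\le(\max_i a_i)^{p-1}\sum_i a_i$ by separating the cases $\sum_i a_i\le p^\alpha\max_i a_i$ and $\sum_i a_i> p^\alpha\max_i a_i$. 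I would then prove the one--variable probabilistic version: for independent nonnegative $W_1,\dots,W_m$,
\begin{equation}\label{eq:sketch-prob}
p^{\alpha p}\sum_i\E W_i^p\le L^pp^\alpha\Big(p^{\alpha p}\E\max_i W_i^p+\big(\textstyle\sum_i\E W_i\big)^p\Big).
\end{equation}
I would obtain \eqref{eq:sketch-prob} by a layer--cake argument (not by integrating \eqref{eq:sketch-det}, which produces an uncontrollable feedback term): with $S=\sum_i\E W_i$ and threshold $t_0=Sp^{-\alpha}$, the part on $\{W_i\le t_0\}$ is $\le t_0^{p-1}S$ and gives the $(\sum_i\E W_i)^p$--term, while $\sum_i\E[W_i^p\ind{W_i>t_0}]=t_0^p\sum_i\p(W_i>t_0)+\int_{t_0}^\infty pt^{p-1}\sum_i\p(W_i>t)\,dt$ is estimated with $\sum_i\p(W_i>t)\le S/t$, with $\sum_i\p(W_i>t)\le 2\p(\max_i W_i>t)$ on the range where $\sum_i\p(W_i>t)<1$ (since $1-\prod_i(1-p_i)\ge(1-e^{-1})\sum_ip_i$ when $\sum_ip_i\le1$), and with $\p(\max_iW_i>t)\ge1-e^{-1}$ on the complementary range, which after one Young's inequality yields the $p^{\alpha p}\E\max_iW_i^p$--term. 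Conditioning extends \eqref{eq:sketch-prob} to kernels $W_i=g_i(X_i,Z)$ carrying extra independent randomness $Z$, provided $Z$ is kept outside the power in the averaged term. I would also use the Rosenthal inequality for nonnegative independent summands, $\E(\sum_iW_i)^p\le L^p\big((\sum_i\E W_i)^p+p^p\sum_i\E W_i^p\big)$, and Lemma~\ref{Talagrand} for suprema of \emph{centred} empirical processes.

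\emph{Induction on $d$.} The case $d=0$ is trivial. For the step I would peel off the last coordinate: conditionally on $(X_i^{(j)})_{i,\,j<d}$ the variables indexing $i_d$ are independent, so \eqref{eq:sketch-prob} applied to $\sum_{i_d}\E_{\{d\}}g_\ii^p$, followed by summation over $i_1,\dots,i_{d-1}$ and integration, bounds $p^{\alpha p}\sum_\ii\E g_\ii^p$, up to the factor $L^pp^\alpha$, by the two quantities $p^{\alpha p}\sum_{i_1,\dots,i_{d-1}}\E\max_{i_d}g_\ii^p$ and $\sum_{i_1,\dots,i_{d-1}}\E\big(\sum_{i_d}\E_{\{d\}}g_\ii\big)^p$. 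In the second one $\rho_{\ii'}:=\sum_{i_d}\E_{\{d\}}g_{\ii',i_d}$ is an honest $(d-1)$--variable nonnegative kernel, so the induction hypothesis (used with an arbitrarily small parameter in place of $\alpha$, making its prefactor harmless) expresses it through $\E\max_{\ii'}\rho_{\ii'}^p$ and the terms $p^{\#I'p}\E_{I'}\max_{\ii'_{I'}}\big(\sum_{\ii'_{(I')^c}}\E_{(I')^c}\rho_{\ii'}\big)^p$ for $I'\subsetneq I_{d-1}$; substituting $\rho_{\ii'}=\sum_{i_d}\E_{\{d\}}g_\ii$ and setting $I:=I'$ (so $d\in I^c$) turns each of these into one of the asserted terms for $g$.

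\emph{The hard part} is the first quantity, where $h_{\ii'}:=\max_{i_d}g_{\ii',i_d}$ is a $(d-1)$--variable kernel carrying the extra randomness $Z:=(X^{(d)}_j)_j$. I would apply the induction hypothesis in the extra--randomness form (which follows from the plain one by conditioning on $Z$): its top term gives back $p^{\alpha p}\E\max_\ii g_\ii^p$, and for each remaining term one uses $\E_{(I')^c}\max_{i_d}g_{\ii',i_d}\le\sum_{i_d}\E_{(I')^c}g_{\ii',i_d}$ to return from $h$ to $g$, splits the resulting sum over $i_d$ into its conditional mean and fluctuation, estimates the fluctuation by Lemma~\ref{Talagrand} (conditionally on $(X_i^{(j)})_{j\in I'}$) and the mean trivially, and thereby turns the term either into the corresponding term of the assertion with $d$ \emph{summed} (coefficient $p^{\#I p}$, $d\in I^c$, coming from the mean, with no new power of $p$) or into a term with $d$ \emph{maxed} (coefficient $p^{\#I p}\cdot p^p$, $d\in I$, the extra $p^p$ being exactly the one attached to the $\E\max$--term in Lemma~\ref{Talagrand}); the remaining variance term from Lemma~\ref{Talagrand} is disposed of by the same procedure. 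Collecting everything and absorbing the accumulated universal constants and bounded powers of $p$ into $L_d^p$ (using $C^p\ge p^k$ for $C=C(k)$ large, $p\ge1$) gives the claim.

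\emph{Main obstacle.} The delicate point is precisely this: taking a maximum over a coordinate does not produce an honest lower--order $U$--statistic kernel (the output depends on \emph{all} the variables carried by that coordinate), so the induction hypothesis must be phrased robustly enough to accept such inputs; one must control the centred empirical processes that appear when the $d$--th coordinate's variables are integrated back in, which is where Lemma~\ref{Talagrand} enters; and one has to keep exact track of how the powers of $p$ and the ``$\max$ versus $\sum$'' status of each coordinate propagate through the recursion, so that the final coefficients come out to be precisely $p^{\alpha p}$ on the global maximum and $p^{\#I p}$ on the mixed terms.
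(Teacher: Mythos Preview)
The paper does not prove this lemma at all: it is quoted verbatim as ``Lemma~5 in \cite{Amc}'' and used as a black box in the proof of Theorem~\ref{moment_estimates}. So there is no proof in the present paper to compare your attempt against; the argument you would be matching lives in \cite{Amc}.

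As for your sketch on its own merits: the one--variable inequality \eqref{eq:sketch-prob} and the induction scheme (peel off the $d$--th coordinate, apply the $(d-1)$--dimensional statement to the two resulting pieces) are the right skeleton, and the treatment of the ``easy'' branch via $\rho_{\ii'}=\sum_{i_d}\E_{\{d\}}g_{\ii',i_d}$ is clean. The ``hard'' branch, however, is not closed. After you apply the induction hypothesis to $h_{\ii'}=\max_{i_d}g_{\ii',i_d}$ conditionally on $Z=(X^{(d)}_j)_j$, you face terms of the form
\[
p^{\#I'p}\,\E_Z\E_{I'}\max_{\ii'_{I'}}\Big(\sum_{\ii'_{(I')^c}}\E_{(I')^c}\max_{i_d}g_{\ii',i_d}\Big)^p,
\]
and your move $\E_{(I')^c}\max_{i_d}g\le\sum_{i_d}\E_{(I')^c}g$ followed by Lemma~\ref{Talagrand} in the $i_d$--direction produces, besides the two terms you account for, the term $(\E_d\max_{\ii'_{I'}}|\sum_{i_d}(V-\E_dV)|)^p$ from Lemma~\ref{Talagrand}, which you do not estimate; it does not obviously collapse into one of the target quantities, and ``the same procedure'' is not a bound for it. In the original proof in \cite{Amc} this difficulty is avoided: one works throughout with the nonnegative summands directly (Rosenthal--type bounds for positive variables and the pointwise inequality you call \eqref{eq:sketch-det}), never invoking the centred empirical--process Lemma~\ref{Talagrand}, so the problematic $(\E\sup|\sum|)^p$ term simply does not appear. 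If you want to salvage your route, replace the appeal to Lemma~\ref{Talagrand} by a direct nonnegative Rosenthal/Hoffmann--J{\o}rgensen bound on $\sum_{i_d}$ and keep the $\max_{\ii'_{I'}}$ outside via monotonicity; otherwise the recursion does not close.
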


Before stating the next lemma, let us introduce some more
definitions, concerning $\mathcal{J}$--norms of deterministic
matrices

\begin{defi} Let $(a_\ii)_{\ii \in I_n^d}$ be a $d$-indexed array of real numbers.
For $\mathcal{J} = \{J_1,\ldots,J_k\} \in \mathcal{P}_{I_d}$
define
\begin{displaymath}
\|(a_\ii)_\ii\|_{\mathcal{J}} =\sup\Big\{\sum_{\ii}
a_{\ii}x_{\ii_{J_{1}}}^{(1)}\cdots x_{\ii_{J_{k}}}^{(k)}\colon
\sum_{\ii_{J_{1}}}(x_{\ii_{J_{1}}}^{(1)})^{2}\leq 1,\ldots,
\sum_{\ii_{J_{k}}}(x_{\ii_{J_{k}}}^{(k)})^{2}\leq 1\Big\}.
\end{displaymath}
\end{defi}

We will also need
\begin{defi}
For $\ii \in \N^{d-1}\times I_n$ let $a_\ii \colon \Sigma \to \R$
be measurable functions and $Z_1,\ldots,Z_n$ be independent random variables with values in $\Sigma$.
For a partition $\mathcal{J} =
\{J_1,\ldots,J_k\} \in \mathcal{P}_{I_d}$ ($d \in J_1$), let us
define
\begin{align*}
\|(a_\ii(Z_{i_d}))_\ii\|_{\mathcal{J}}
=\sup\Big\{&\sqrt{\sum_{\ii_{J_1}}\E \Big(\sum_{\ii_{I_d\backslash
J_1}} a_{\ii}(Z_{i_d})x_{\ii_{J_{2}}}^{(2)}
\cdots x_{\ii_{J_{k}}}^{(k)}\Big)^2}\colon\\
& \sum_{\ii_{J_{2}}}(x_{\ii_{J_{2}}}^{(2)})^{2}\leq 1,\ldots,
\sum_{\ii_{J_{k}}}(x_{\ii_{J_{k}}}^{(k)})^{2}\leq 1\Big\}.
\end{align*}
\end{defi}

\paragraph{Remark} All the definitions of norms presented so far,
seem quite similar and indeed they can be all interpreted as
injective tensor-product norms on proper spaces. We have decided
to introduce them separately by explicit formulas, because this
form appears in our applications.

The next lemma is crucial for obtaining moment inequalities for canonical real-valued
$U$-statistics of order greater than 2. In the context of $U$-statistics in Hilbert spaces we will need it already
for $d=2$.

\begin{lemma}[Theorem 5 in \cite{Amc}]\label{random_multimatrix}
Let $Z_1,\ldots,Z_n$ be independent random variables with values
in $(\Sigma,\mathcal{F})$. For $\ii \in \N^{d-1}\times I_n$ let
$a_\ii \colon \Sigma \to \R$ be measurable functions, such that
$\E_{Z}a_\ii(Z_{i_d}) = 0$. Then, for all $p \ge 2$ we have
\begin{align*}
\E\|(\sum_{i_d} a_{\ii}(Z_{i_d})&)_{\ii_{I_{d-1}}}\|  \le
L_d\sum_{\mathcal{J} \in \mathcal{P}_{I_d}}p^{(1 +
\deg{(\mathcal{J})} -
d)/2}\|(a_\ii(Z_{i_d}))_\ii\|_\mathcal{J} \\
&+ L_d\sum_{J\in \mathcal{P}_{I_{d-1}}}p^{1 + (1 +
\deg(\mathcal{J}) -
d)/2}\sqrt{\E\max_{i_d}\|(a_\ii(Z_{i_d}))_{\ii_{I_{d-1}}}\|_\mathcal{J}^2},
\end{align*}
where $\|\cdot\|$ denotes the norm of a $(d-1)$-indexed matrix,
regarded as a $(d-1)$-linear operator on $(l_2)^{d-1}$ (thus the
$\|\cdot\|_{\{1\}\ldots\{d-1\}}$--norm in our notation).
\end{lemma}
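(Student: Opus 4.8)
The plan is to recognise the operator norm as the supremum of an empirical process, control its moments by Lemma~\ref{Talagrand} together with a Gaussian comparison, and read off the two families of terms from the ``variance'' of the process and from its maximal jump.

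First I would fix countable dense subsets of the unit balls of $\ell_2$ and write
\[
S:=\Big\|\Big(\sum_{i_d}a_\ii(Z_{i_d})\Big)_{\ii_{I_{d-1}}}\Big\|=\sup_{x^{(1)},\dots,x^{(d-1)}}\sum_{i_d}f^{\,x}_{i_d}(Z_{i_d}),\qquad f^{\,x}_{i_d}(z)=\sum_{\ii_{I_{d-1}}}a_\ii(z)\prod_{l=1}^{d-1}x^{(l)}_{i_l}.
\]
Here each $f^{\,x}_{i_d}$ is centred in $Z_{i_d}$ because $\E_{Z}a_\ii(Z_{i_d})=0$, so $S$ is the supremum over the countable class $\mathcal T=\{(f^{\,x}_{i_d})_{i_d}\}$ of a sum of independent centred summands, and the two quantities that Lemma~\ref{Talagrand} attaches to $\mathcal T$ are exactly structured norms of $(a_\ii(Z_{i_d}))$: the weak variance $\sigma^2:=\sup_x\sum_{i_d}\E f^{\,x}_{i_d}(Z_{i_d})^2$ equals $\|(a_\ii(Z_{i_d}))_\ii\|_{\mathcal J}^2$ for the finest partition $\mathcal J=\{\{d\},\{1\},\dots,\{d-1\}\}$ of $I_d$, and $\E\max_{i_d}\sup_x|f^{\,x}_{i_d}(Z_{i_d})|^p$ equals $\E\max_{i_d}\|(a_\ii(Z_{i_d}))_{\ii_{I_{d-1}}}\|_{\mathcal J}^p$ for the finest partition $\mathcal J=\{\{1\},\dots,\{d-1\}\}$ of $I_{d-1}$.

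The heart of the matter is to bound the first moment $\E S$. Writing $S=\|\sum_{i_d}V_{i_d}\|$ with $V_{i_d}=(a_\ii(Z_{i_d}))_{\ii_{I_{d-1}}}$ independent, mean zero and multilinear-form valued, symmetrisation gives $\E S\le 2\,\E\|\sum_{i_d}\varepsilon_{i_d}V_{i_d}\|$, and conditionally on $(Z_{i_d})$ the contraction principle lets me replace the signs $\varepsilon_{i_d}$ by standard Gaussians $g_{i_d}$; so it suffices to bound the expected operator norm of the Gaussian series $\sum_{i_d}g_{i_d}V_{i_d}$. For such a series with $(d-1)$-linear coefficients the known moment estimates for Gaussian multilinear forms express this by a sum, over partitions $\mathcal J\in\mathcal P_{I_d}$, of structured norms of the coefficient array, each block split off from $\mathcal J$ costing a factor of order $\sqrt p$ --- whence the exponent $p^{(1+\deg\mathcal J-d)/2}$, normalised so that the finest partition carries $p^{1/2}$. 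Pulling the expectation over $Z$ inside these norms by Jensen's inequality turns them into the deterministic quantities $\|(a_\ii(Z_{i_d}))_\ii\|_{\mathcal J}$; the resulting discrepancy is the operator norm of a sum of independent positive ``slice covariances'', which I would bound by another application of Lemma~\ref{Talagrand}, producing the maximal terms $\sqrt{\E\max_{i_d}\|(a_\ii(Z_{i_d}))_{\ii_{I_{d-1}}}\|_{\mathcal J}^2}$ over $\mathcal J\in\mathcal P_{I_{d-1}}$. When $d=2$ this whole step collapses to Cauchy--Schwarz --- the cross terms vanish by centredness, so $\E\|(\sum_{i_d}a_\ii(Z_{i_d}))_{i_1}\|\le\big(\sum_{i_1,i_d}\E a_\ii(Z_{i_d})^2\big)^{1/2}=\|(a_\ii(Z_{i_d}))_\ii\|_{\{\{1,2\}\}}$ --- and the intermediate partitions of $I_d$ arise in the same way after contracting some of the coordinates $i_1,\dots,i_{d-1}$ against fixed unit vectors.

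Finally I would insert this bound on $\E S$, together with $\sigma$ and the maximal quantity identified in the second paragraph, into Lemma~\ref{Talagrand} with exponent $p$; since $\E S\le(\E S^p)^{1/p}$ this upgrades the estimate to the $p$-th moment and adjoins $p^{1/2}\sigma$ and the finest-partition maximal term carrying the extra factor $p$ (hence the ``$1+$'' in the exponent $1+(1+\deg\mathcal J-d)/2$), while a truncation of $a_\ii(Z_{i_d})$ at a level comparable to the slice norms, combined with Lemma~\ref{sumy_na_maxima}, converts the sums of $p$-th moments that appear into the remaining $\E\max$ quantities at the advertised powers of $p$; absorbing constants into $L_d$ finishes the proof. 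The main obstacle is not a single estimate but the bookkeeping that keeps the scheme uniform in $d$: matching each partition of $I_d$ and of $I_{d-1}$ to its exact power of $p$, checking that the partial contractions and the truncation generate precisely the claimed norms with no loss, and exploiting that for $d\ge 3$ the operator norm is strictly smaller than the Hilbert--Schmidt norm --- so the trivial $\E S\le(\E\|\cdot\|_{\mathrm{HS}}^2)^{1/2}$ is insufficient and the Gaussian comparison is essential; that comparison, moreover, must be carried out through moment/Khintchine-type inequalities rather than a net argument, since $\mathcal T$ is infinite in the first $d-1$ coordinates.
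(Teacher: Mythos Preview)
This lemma is not proved in the paper; it is quoted as Theorem~5 of \cite{Amc} and used as a black box. So there is no proof here to compare your sketch against directly.

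That said, your outline has a real gap at the ``Gaussian comparison'' step. After symmetrising and passing to Gaussians you are left with $\E_g\bigl\|\sum_{i_d}g_{i_d}V_{i_d}\bigr\|$, the expected \emph{operator norm} of a Gaussian series of $(d-1)$-tensors --- i.e.\ the expected supremum of a Gaussian process indexed by the product of $d-1$ unit $\ell_2$-balls. The ``moment estimates for Gaussian multilinear forms'' you invoke (presumably \cite{L3}) concern Gaussian \emph{chaoses} $\sum_\ii a_\ii\, g^{(1)}_{i_1}\cdots g^{(d)}_{i_d}$, where independent Gaussians sit in \emph{every} coordinate slot; in your expression only the $i_d$-slot is Gaussian and the remaining $d-1$ directions are still a deterministic supremum. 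These are genuinely different objects, and the partition norms $\|\cdot\|_{\mathcal J}$ appear in the chaos bounds precisely because every coordinate is averaged rather than optimised --- there is no off-the-shelf comparison converting one into the other at the precision needed here. Your closing appeal to Lemma~\ref{sumy_na_maxima} is likewise out of place: that lemma exchanges sums of $p$-th moments for maxima, and no such sums occur in the scheme you describe.

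The proof in \cite{Amc} takes a different route. It is an induction on $d$ in which one peels off the supremum directions $x^{(l)}$ one at a time, applying Lemma~\ref{Talagrand} (or an equivalent concentration bound) at each stage to the remaining one-parameter supremum. The partition norms over $\mathcal P_{I_d}$ accumulate as the induction keeps track of which blocks of coordinates have already been merged, and the $\E\max_{i_d}$ terms over $\mathcal P_{I_{d-1}}$ arise from the maximal summand of Talagrand's inequality at each step. No Gaussian replacement enters.
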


To prove Theorem \ref{moment_estimates}, we will need to adapt the above lemma to be able to
bound the $(K,\mathcal{J})$-norms of sums of independent
kernels.

\begin{defi}
We define a partial order $\prec$ on $P_I$ as
\begin{displaymath}
\mathcal{I} \prec \mathcal{J}
\end{displaymath}
if and only if for all $I \in \mathcal{I}$, there exists $J\in\mathcal{J}$, such that $I \subseteq J$.
\end{defi}

\begin{lemma} \label{crucial_lemma}Assume that $\sum_\ii \E|h_\ii(\xdi)|^2 < \infty$.
Then for any $K\subseteq I_{d-1}$ and $\mathcal{J} =
\{J_1,\ldots,J_k\} \in \mathcal{P}_{I_{d-1}\backslash K}$ and
all $p\ge 2$,
\begin{align}
\label{awful_formula}
\E_{d} &\|(\sum_{i_d}
h_\ii(\xdi))_{\ii_{I_{d-1}}}\|_{K,\mathcal{J}} \\
& \le L_d\Bigg(\sum_{\stackrel{K\subseteq L \subseteq I_{d},\;
\mathcal{K}\in\mathcal{P}_{I_d\backslash
L}\colon}{\mathcal{J}\cup\{K,\{d\}\}\prec \mathcal{K}\cup\{L\}}}
p^{(\deg\mathcal{K}-\deg\mathcal{J})/2}\|(h_\ii)_{\ii_{I_d}}\|_{L,\mathcal{K}}\nonumber\\
&+\sum_{\stackrel{K\subseteq L\subseteq
I_{d-1},\;\mathcal{K}\in\mathcal{P}_{I_{d-1}\backslash
L}\colon}{\mathcal{J}\cup\{K\}\prec
\mathcal{K}\cup\{L\}}}p^{1+(\deg\mathcal{K}-\deg\mathcal{J})/2}\sqrt{\E_d\max_{i_d}\|(h_\ii)_{\ii_{I_{d-1}}}\|_{L,\mathcal{K}}^2}\Bigg)\nonumber.
\end{align}
\end{lemma}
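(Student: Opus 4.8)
The plan is to reduce \eqref{awful_formula} to an application of Lemma \ref{random_multimatrix}, by expressing the $(K,\mathcal{J})$-norm of the sum $\sum_{i_d} h_\ii(\xdi)$ as the norm of a suitable random $(d-1)$-indexed matrix whose entries are themselves sums over $i_d$ of real-valued functions of $X^{(d)}_{i_d}$. First I would fix test functions: the $(K,\mathcal{J})$-norm asks us to bound
\[
\sum_{\ii_{I_{d-1}}}\E_{I_{d-1}}\Big[\big\langle \textstyle\sum_{i_d} h_\ii(\xdi),\, g_{\ii_K}(\xdik)\big\rangle\prod_{j=1}^{k} f^{(j)}_{\ii_{J_j}}(\xdij)\Big]
\]
over $g$ and the $f^{(j)}$ satisfying the $L^2$ normalization. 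Pairing $g_{\ii_K}$ against $h_\ii$ produces, after taking $\E_{I_{d-1}}$, a real-valued array indexed by $\ii_{I_{d-1}}$ and still carrying the randomness of $X^{(d)}_{i_d}$; contracting with the $f^{(j)}$'s is then exactly the evaluation of a $(d-1)$-linear form, so $\|(\sum_{i_d} h_\ii)_{\ii_{I_{d-1}}}\|_{K,\mathcal{J}}$ becomes (a conditional version of) the operator norm $\|\cdot\|_{\{J_1\}\cdots\{J_k\}}$ of a $(d-1)$-indexed matrix built from $h_\ii$, $g_{\ii_K}$. The key point is that the coordinate blocks of this matrix are indexed by $K, J_1,\ldots,J_k$, i.e. by the partition $\mathcal{J}\cup\{K\}$ of $I_{d-1}$, and the matrix entries are centered sums over $i_d$ (since $h$ is completely degenerate, $\E_d h_\ii = 0$). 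Thus Lemma \ref{random_multimatrix} applies with this block structure playing the role of $I_{d-1}$ there, and $d$ the extra summation index.

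Next I would run Lemma \ref{random_multimatrix}. Its right-hand side has two types of terms. The first sum ranges over partitions $\mathcal{K}$ of the full index set (blocks $K,J_1,\dots,J_k$ together with $\{d\}$), weighted by $p^{(1+\deg\mathcal{K}-d)/2}$ and the deterministic $\mathcal{K}$-norm of the array of functions $(a_\ii(Z_{i_d}))_\ii$; the second sum ranges over partitions of the index set without $\{d\}$, weighted by $p^{1+(1+\deg\mathcal{K}-d)/2}$ and the square-root of the max-over-$i_d$ of the $\mathcal{K}$-norm squared. The main bookkeeping task is to identify each of these norms of $a$-arrays (which involve the auxiliary $g_{\ii_K}$ and $f^{(j)}$) with a genuine $(L,\mathcal{K})$-norm of the original kernel array $(h_\ii)_{\ii_{I_d}}$: one takes the supremum over the $g$'s and $f^{(j)}$'s, absorbs the block containing $d$ (together with $K$) into the distinguished Hilbert-valued slot, and reads off which set $L\supseteq K$ and which partition $\mathcal{K}\in\mathcal{P}_{I_d\setminus L}$ (resp.\ $\mathcal{P}_{I_{d-1}\setminus L}$) appears. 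The constraint in the index set of \eqref{awful_formula}, namely $\mathcal{J}\cup\{K,\{d\}\}\prec \mathcal{K}\cup\{L\}$ (resp.\ $\mathcal{J}\cup\{K\}\prec\mathcal{K}\cup\{L\}$), records precisely which partitions $\mathcal{K}$ and sets $L$ are reachable: a block of $\mathcal{K}\cup\{L\}$ must be a union of blocks of $\mathcal{J}\cup\{K\}$ (and of $\{d\}$ in the first case), because refining a test function $f$ on a larger block is always allowed but merging is not. Finally I would check the exponents of $p$: the $\mathcal{J}$-norm we started with carried no $p$, and $\deg(\mathcal{J}\cup\{K\}) = k+1$, $\deg(\mathcal{J}\cup\{K,\{d\}\}) = k+2 = d'$ where $d' = k+2$ is the size of the enlarged index family; plugging into $p^{(1+\deg\mathcal{K}-d')/2}$ gives exactly $p^{(\deg\mathcal{K}-\deg\mathcal{J})/2}$ after noting $\deg\mathcal{K}$ here counts blocks including $L$ and shifting by the fixed contributions of $K$ and $\{d\}$; the second family gains the extra factor $p^1$ as in the statement.

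The hard part will be the second step: correctly matching the abstract $\mathcal{K}$-norms produced by Lemma \ref{random_multimatrix} with the $(L,\mathcal{K})$-norms of Definition \ref{determnistic_norms_Hilbert}, and in particular verifying that taking $\sup$ over the auxiliary functions commutes with the structure of Lemma \ref{random_multimatrix} — i.e.\ that $\sup_{g,f}$ of the $\mathcal{K}$-norm of the $a$-array equals the $(L,\mathcal{K})$-norm of $h$. One subtlety is that the distinguished block of $\mathcal{K}$ (the one containing $d$) may or may not also contain $K$; when it does, the Hilbert-valued function $g_{\ii_K}$ and the ``free'' direction of Lemma \ref{random_multimatrix} combine into a single $H$-valued test function on $\Sigma^L$ with $L\supseteq K\cup\{d\}$, and when it doesn't, $g_{\ii_K}$ stays Hilbert-valued on $\Sigma^K$ but $K$ lies inside some block of $\mathcal{K}$ — this is the origin of the two separate sums and of the condition that $L\supseteq K$ together with $\mathcal{J}\cup\{K,\{d\}\}\prec\mathcal{K}\cup\{L\}$. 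A second subtlety is the interchange of $\E_d$ (the conditional expectation inside the definition of the norm, when $I\subsetneq I_d$) with the suprema; here one uses that the suprema in all the norms can be restricted to countable families (separability of $H$ and a standard approximation), so Lemma \ref{Talagrand} and Lemma \ref{random_multimatrix}, both stated for countable classes, apply, and Fubini/monotone convergence handle the rest. Once these identifications are in place, collecting terms and relabeling the index of summation yields \eqref{awful_formula} with a constant $L_d$ absorbing the finitely many combinatorial factors.
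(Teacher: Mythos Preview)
Your plan is essentially the paper's own proof: recognize the $(K,\mathcal{J})$-norm of $\sum_{i_d}h_\ii$ as the $(k+1)$-linear operator norm of a random functional on the product of the ``block'' Hilbert spaces $H_0,\dots,H_k$ (with $H_0$ carrying the $H$-valued slot), apply Lemma \ref{random_multimatrix} to this $(k+2)$-indexed object with $i_d$ as the extra summation index, and translate the resulting $\mathcal{I}$-norms back into $(L,\mathcal{K})$-norms via associativity/distributivity of the block structure. The paper packages this identification through tensor products (defining $H_0=H\otimes[\otimes_{l\in K}(\oplus_i L^2(X_i^{(l)}))]$, $H_j=\otimes_{l\in J_j}(\oplus_i L^2(X_i^{(l)}))$, and functionals $A_{i_d}$), which makes the ``$\sup_{g,f}$ commutes'' worry disappear: the supremum over $g,f^{(j)}$ \emph{is} the operator norm, so Lemma \ref{random_multimatrix} is applied once, to the operator-valued sum, rather than to a fixed choice of test functions followed by a supremum. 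Your exponent bookkeeping and the reading of the refinement condition $\mathcal{J}\cup\{K,\{d\}\}\prec\mathcal{K}\cup\{L\}$ match the paper's.
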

\paragraph{Remark} In the above lemma we slightly abuse the notation, by identifying for $K = \emptyset$ the
partition $\{\emptyset\}\cup\mathcal{J}$ with $\mathcal{J}$.

\paragraph{}Given Lemma \ref{random_multimatrix}, the proof of Lemma \ref{crucial_lemma} is not complicated, the main idea is just
a change of basis, however due to complicated notation it is quite difficult to write
it directly. We find it more convenient to write the proof in terms of tensor products of
Hilbert spaces.

Let us begin with a classical fact.

\begin{lemma} \label{iloczyn_tensorowy_Hilbert} Let $H$ be a separable Hilber space and
$X$ a $\Sigma$-valued random variable. Then
$H\otimes L^2(X) \simeq L^2(X,H)$, where $L^2(X,H)$ is the space of square integrable random variables
of the form $f(X)$,
$f\colon \Sigma \to H$-measurable. With the above identification,
for $h\in H$, $f(X)\in L^2(X)$, we have $h\otimes f(X) = hf(X) \in
L^2(X,H)$.
\end{lemma}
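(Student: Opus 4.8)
The plan is to construct an explicit isometric isomorphism. First I would introduce the bilinear map $B\colon H\times L^2(X)\to L^2(X,H)$ given by $B(h,f)=hf(X)$, i.e.\ $B(h,f)$ is the $H$-valued random variable $\omega\mapsto f(X(\omega))h$; it is square integrable since $\E|hf(X)|^2=|h|^2\E f(X)^2<\infty$, and bilinearity is immediate. By the universal property of the algebraic tensor product, $B$ factors through a linear map $T\colon H\otimes_{\mathrm{alg}}L^2(X)\to L^2(X,H)$ satisfying $T(h\otimes f)=hf(X)$.

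Next I would verify that $T$ is an isometry on the algebraic tensor product, that is, that it carries the canonical tensor inner product to the inner product of $L^2(X,H)$. On elementary tensors this amounts to the computation
\[
\langle h_1f_1(X),h_2f_2(X)\rangle_{L^2(X,H)}=\E\big[\langle h_1,h_2\rangle f_1(X)f_2(X)\big]=\langle h_1,h_2\rangle_H\,\langle f_1,f_2\rangle_{L^2(X)},
\]
which is exactly $\langle h_1\otimes f_1,\,h_2\otimes f_2\rangle$; extending by bilinearity to finite sums of elementary tensors shows that $T$ preserves the inner product, hence the norm. In particular $T$ is injective, and it extends uniquely to an isometric embedding of the completion $H\otimes L^2(X)$ into $L^2(X,H)$.

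It remains to show that the range of $T$ is dense, which is the only place where separability of $H$ enters. Fix an orthonormal basis $(e_k)_{k\ge1}$ of $H$ and let $g\in L^2(X,H)$ be arbitrary. Put $g_k(X):=\langle g(X),e_k\rangle\in L^2(X)$ (these are genuine square-integrable functions of $X$ by the Pettis measurability theorem, since $H$ is separable). Then $g(X)=\sum_k g_k(X)e_k$ pointwise and $\sum_k\|g_k\|_{L^2(X)}^2=\E|g(X)|^2=\|g\|_{L^2(X,H)}^2<\infty$, so $g=\sum_k T(e_k\otimes g_k)$ with the series converging in $L^2(X,H)$. Hence $g$ lies in the closure of the range of $T$, so $T$ is onto and therefore a unitary isomorphism $H\otimes L^2(X)\simeq L^2(X,H)$ sending $h\otimes f(X)$ to $hf(X)$, as claimed.

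The only genuinely delicate point is the passage between the Bochner-space description of $L^2(X,H)$ and its coordinatewise description used in the density step; once separability of $H$ is invoked (so that strong and weak measurability agree and every element admits an unconditionally convergent expansion along $(e_k)$), the remainder is the routine verification that the explicitly defined map $T$ is a bilinear isometry onto a dense subspace.
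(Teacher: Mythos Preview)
Your argument is correct and is the standard proof of this classical fact. The paper itself does not give a proof: it introduces the lemma with the phrase ``Let us begin with a classical fact'' and states it without justification, so there is nothing to compare against beyond noting that your explicit construction of the unitary $T(h\otimes f)=hf(X)$, together with the density argument via an orthonormal basis of $H$, is exactly the routine verification the authors had in mind.
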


\begin{proof}[Proof of Lemma \ref{crucial_lemma}]
To avoid problems with notation, which would lengthen an intuitively easy proof,
we will omit some technical details, related to obvious identification of some tensor product
of Hilbert spaces (in the spirit of Lemma \ref{iloczyn_tensorowy_Hilbert}). Similarly, when considering
linear functionals on a space, which can be written as a tensor product in several ways, we will
switch to the most convenient notation, without further explanations.

Let
\begin{displaymath}
H_0 = H\otimes\big[\otimes_{l\in K}(\oplus_{i=1}^n
L^2(X_{i}^{(l)})] \simeq \oplus_{|\ii_K|\le n} L^2(\xdik,H)
\end{displaymath}
and, for $j=1,\ldots,k$,
\begin{displaymath}
H_i = \otimes_{l\in J_j}(\oplus_{i=1}^n L^2(X_{i}^{(l)})) \simeq
\oplus_{|\ii_{J_j}|\le n} L^2(\xdij).
\end{displaymath}

In the case $K=\emptyset$, we have (using the common convention for empty products)
 $H_0 \simeq H$.

For $i_d = 1,\ldots, n$ and fixed value of $X_{i_d}^{(d)}$,
let $A_{i_d}$ be a linear functional on $\tilde{H} =
\oplus_{|\ii_{I_{d-1}}|\le n}L^2(\mathbf{X}_{\ii_{I_{d-1}}}^{\rm
dec},H)\simeq \otimes_{j=0}^k H_k$, given by
$(h_\ii(\xdi))_{|\ii_{I_{d-1}}|\le n} \in \tilde{H}$, with the formula
\begin{align*}
A_{i_d}((g_{\ii_{I_{d-1}}}(\mathbf{X}_{\ii_{I_{d-1}}}^{\rm
dec}))_{\ii_{I_{d-1}}}) &= \langle
(g_{\ii_{I_{d-1}}}(\mathbf{X}_{\ii_{I_{d-1}}}^{\rm
dec}))_{\ii_{I_{d-1}}}, (h_\ii(\xdi))_{\ii_{I_{d-1}}}\rangle_{\tilde{H}}\\
&= \sum_{|\ii_{I_{d-1}}|\le n}\E_{\{1,\ldots,d-1\}}\langle
g_{\ii_{I_{d-1}}}(\mathbf{X}_{\ii_{I_{d-1}}}^{\rm dec}),
h_\ii(\xdi)\rangle_H.
\end{align*}
 As functions of
$X_{i_d}^{(d)}$, $A_{i_d}= A_{i_d}(X_{i_d}^{(d)})$ are
independent random linear functionals. Thus they determine
also random  $(k+1)$-linear functionals on $\oplus_{j=0}^k H_k$,
given by
\begin{displaymath}
(h_0,h_1,\ldots,h_k) \mapsto A_{i_d}(h_0\otimes
h_1\otimes\ldots\otimes h_k).
\end{displaymath}
If we denote by $\|\cdot\|$ the norm of a
$(k+1)$-linear functional, the left hand-side of (\ref{awful_formula}), can be written as

\begin{displaymath}
\E\big\|\sum_{i_d=1}^n A_{i_d}(X_{i_d}^{(d)})\big\|.
\end{displaymath}
Moreover, denoting by $\|A_{i_d}\|_{HS}$ the norm of $A_{i_d}$ seen as
a linear operator on $\otimes_{j=0}^k H_j$ (by analogy with the Hilbert-Schmidt norm of a matrix), we have
\begin{displaymath}
\sum_{i_d=1}^n \E \|A_{i_d}(X_{i_d}^{(d)})\|^2_{HS} =
\|(h_\ii)_\ii\|_{I_d,\emptyset}^2 < \infty,
\end{displaymath}
so
the sequence $A_{i_d}(X_{i_d}^{(d)})$, determines
a linear functional $A$ on $\tilde{H}\otimes
[\oplus_{i_d=1}^n L^2(X_{i_d}^{(d)})] \simeq \oplus_{|\ii|\le
n}L^2(\xdi,H) \simeq \oplus_{i_d=1}^n
L^2(X_{i_d}^{(d)},\tilde{H})$, given by the formula
\begin{displaymath}
A(g_1(X_1^{(d)}),\ldots,g_n(X_n^{(d)})) = \sum_{i_d=1}^n \E[
A_{i_d}(X_{i_d}^{(d)})(g_{i_d}(X_{i_d}^{(d)}))].
\end{displaymath}
It is easily seen, that if we interpret the domain of this functional as
 $\oplus_{|\ii|\le n}L^2(\xdi,H)$, then
it corresponds to the multimatrix $(h_\ii(\xdi))_\ii$.

Let us now introduce the following notation, consistent with the definition of
$\|\cdot\|_\mathcal{J}$.  If $T$ is a linear functional on
$\otimes_{j=0}^m E_j$ for some Hilbert spaces $E_j$, and
$\mathcal{I} = \{L_1,\ldots,L_r\}\in \mathcal{P}_{I_m\cup\{0\}}$,
then let $\|T\|_{\mathcal{I}}$ denote the norm of $T$ as a
$r$-linear functional on $\oplus_{i=1}^r[\otimes_{j\in L_i} E_j]$, given by

\begin{displaymath}
(e_1,\ldots,e_r) \mapsto T(e_1\otimes\ldots\otimes e_r).
\end{displaymath}

Now, denoting $H_{k+1}= \oplus_{i_d=1}^n L^2(X_{i_d}^{(d)})$,
we can apply the above definition to $\tilde{H}\otimes
[\oplus_{i_d=1}^n L^2(X_{i_d}^{(d)})] \simeq \otimes_{j=0}^{k+1} H_j$
and use Lemma \ref{random_multimatrix} to obtain
\begin{align}\label{forma_tensorowa}
\E\big\|\sum_{i_d=1}^n A_{i_d}(X_{i_d}^{(d)})\big\| \le&
L_d\sum_{\mathcal{I} \in \mathcal{P}_{I_{k+1}\cup\{0\}}}p^{(1 +
\deg{(\mathcal{I})} - (k+2))/2}\|A\|_\mathcal{I}  \nonumber\\
&+ L_d\sum_{\mathcal{I}\in \mathcal{P}_{I_{k}\cup\{0\}}}p^{1 + (1 +
\deg(\mathcal{I}) -
(k+2))/2}\sqrt{\E\max_{i_d}\|A_{i_d}(X_{i_d}^{(d)})\|_\mathcal{I}^2}.
\end{align}

This inequality is just the statement of the Lemma, which follows from
,,associativity'' of the tensor product and its ,,distributivity''
with respect to the simple sum of Hilbert spaces. Indeed, denoting
$J_{k+1} = \{d\}$, we have for $0\notin L_i$ and $U=
\bigcup_{j\in L_i} J_j$,
\begin{align*}
\otimes_{j \in L_i} H_j &\simeq \otimes_{j\in L_i}\otimes_{l\in
J_j}(\oplus_{s=1}^n L^2(X_{s}^{(l)}))\simeq \otimes_{l\in U}
(\oplus_{s=1}^n L^2(X_{s}^{(l)})) \simeq \oplus_{|\ii_U|\le n}
L^2(\mathbf{X}_{\ii_{U}}^{\rm dec}).
\end{align*}
Similarly, if $0 \in L_i$,
\begin{align*}
\otimes_{j \in L_i} H_j &\simeq [\oplus_{|\ii_K|\le n}
L^2(\xdik,H)]\times [\otimes_{0\neq j\in L_i}\otimes_{l\in
J_j}(\oplus_{s=1}^n L^2(X_{s}^{(l)}))]\\
& \simeq \oplus_{|\ii_U|\le n} L^2(\mathbf{X}_{\ii_{U}}^{\rm
dec},H),
\end{align*}
where $U = (\bigcup_{0\neq j\in L_i} J_j)\cup K$. Using the fact
that for fixed $X_{i_d}^{(d)}$, $A_{i_d}$ corresponds
to the multimatrix $(h_\ii(\xdi))_{|\ii_{I_{d-1}}|\le n}$, and $A$ corresponds
to $(h_\ii(\xdi))_{|\ii|\le n}$, we can see, that each summand
$\|\cdot\|_{\mathcal{I}}$ on the right hand side of
(\ref{forma_tensorowa}) is equal to some summand
$\|\cdot\|_{L,\mathcal{K}}$ on the right hand side of (\ref{awful_formula}). Informally speaking
and abusing slightly the notation (in the case $K=
\emptyset$), we ,,merge'' the elements of the partition
$\{\{d\},J_1,\ldots,J_k,K\}$ or $\{J_1,\ldots,J_k,K\}$ in a way described by the partition
 $\mathcal{I}$, thus obtaining the partition
$\{L\}\cup\mathcal{K}$, where $L$ is the set corresponding in the new partition
to the set $L_i \in \mathcal{I}$, containing $0$
(in particular, if $K=\emptyset$ and $\{0\} \in
\mathcal{I}$, then $L = \emptyset$). Let us also notice, that
$\deg(\mathcal{I}) = \deg(\mathcal{K}) + 1$, hence
\begin{displaymath}
1 + \deg(\mathcal{I}) - (k+2) = \deg(\mathcal{K}) -
\deg(\mathcal{J}),
\end{displaymath}
which shows, that also the powers of $p$ on the right hand sides of (\ref{awful_formula})
and (\ref{forma_tensorowa}) are the same, completing the proof.
\end{proof}

\begin{proof}[Proof of Theorem \ref{moment_estimates}]

For $d=1$, the theorem is an obvious consequence of Lemma
\ref{Talagrand}. Indeed, since $|\cdot| = \sup_{|\phi|\le
1}|\phi(\cdot)|$, and we can restrict the supremum to a countable
set of functionals, we have
\begin{align*}
\E|\sum_i h_i(X_i)|^p &\le L^p\big( (\E|\sum_i h_i(X_i)|)^p +
p^{p/2}\sup_{|\phi|\le 1}(\sum_i \E\langle
\phi,h_i(X_i)\rangle^2)^{p/2}\\
 &+ p^p\E\max_i|h_i(X_i)|^p\big).
\end{align*}

But $\E|\sum_i h_i(X_i)| \le \sqrt{\E|\sum_i h_i(X_i)|^2} =
\sqrt{\sum_i \E|h_i(X_i)|^2} = \|(h_i)_i\|_{\{1\},\emptyset}$ and
we also have $\sup_{|\phi|\le 1}(\sum_i \E\langle
\phi,h_i(X_i)\rangle^2)^{1/2} = \|(h_i)_i\|_{\emptyset,\{1\}}$ and $\max_i |h_i(X_i)| =
\max_i \|h_i\|_{\emptyset,\emptyset}$.

We will now proceed by induction with respect to $d$. Assume that
the theorem is true for all integers smaller than $d\ge 2$ and
denote $\tilde{I}^c = I^c\backslash\{d\}$ for $I\subseteq I_d$.
Then, applying it for fixed $X^{(d)}_{i_d}$ to the array of
functions $(\sum_{i_d}
h_\ii(x_1,\ldots,x_{d-1},X^{(d)}_{i_d})_{\ii_{I_{d-1}}}$, we get
by the Fubini theorem
\begin{align}
\E|\sum_{\ii} &h_\ii(\xdi)|^p \nonumber\\
&\le L_{d-1}^p\Big(\sum_{K\subseteq
I\subseteq I_{d-1}}\sum_{\mathcal{J}\in \mathcal{P}_{I\backslash
K}} p^{p(\#\tilde{I}^c +
\deg\mathcal{J}/2)}\sum_{\ii_{\tilde{I}^c}}\E_{I^c}\|(\sum_{i_d}h_\ii)_{\ii_I}\|_{K,\mathcal{J}}^p\Big)\nonumber,
\end{align}
where we have replaced the maxima in $\ii_{I^c}$ by sums (we can
afford this apparent loss, since we will be able to fix it with
Lemma \ref{sumy_na_maxima}). Now, from Lemma \ref{Talagrand}
(applied to $\E_d$) it follows that
\begin{align*}
\E_{d}\|(\sum_{i_d}h_\ii)_{\ii_I}\|_{K,\mathcal{J}}^p &\le
L^p\Big( (\E_d\|(\sum_{i_d}h_\ii)_{\ii_I}\|_{K,\mathcal{J}})^p +
p^{p/2}\|(h_\ii)_{\ii_{I\cup\{d\}}}\|_{K,\mathcal{J}\cup\{\{d\}\}}^p\\
&+p^p\sum_{i_d}\E_{d}\|(h_\ii)_{\ii_I}\|_{K,\mathcal{J}}^p\Big).
\end{align*}
Since $\tilde{I}^c = (I\cup\{d\})^c$,
$\deg{\mathcal{J}\cup\{\{d\}\}} = \deg\mathcal{J}+1$ and $\#I^c =
\#\tilde{I}^c + 1$, combining the above inequalities gives
\begin{align*}
\E|\sum_{\ii} h_\ii(\xdi)|^p &\le L_d^p\Big(\sum_{K\subseteq
I\subseteq I_d}\sum_{\mathcal{J}\in \mathcal{P}_{I\backslash K}}
p^{p(\#I^c +
\deg\mathcal{J}/2)}\E_{I^c}\sum_{\ii_{I^c}}\|(h_\ii)_{\ii_I}\|_{K,\mathcal{J}}^p\\
&+ \sum_{K\subseteq I\subseteq I_{d-1}}\sum_{\mathcal{J}\in
\mathcal{P}_{I\backslash K}} p^{p(\#\tilde{I}^c +
\deg\mathcal{J}/2)}\sum_{\ii_{\tilde{I}^c}}\E_{\tilde{I}^c}(\E_d\|(\sum_{i_d}h_\ii)_{\ii_I}\|_{K,\mathcal{J}})^p\Big).
\end{align*}
By applying Lemma \ref{crucial_lemma} to the second sum on the
right hand side, we get
\begin{align}\label{nierownosc_z_sumami}
\E|\sum_{\ii} h_\ii(\xdi)|^p &\le L_d^p\Big(\sum_{K\subseteq
I\subseteq I_d}\sum_{\mathcal{J}\in \mathcal{P}_{I\backslash K}}
p^{p(\#I^c +
\deg\mathcal{J}/2)}\E_{I^c}\sum_{\ii_{I^c}}\|(h_\ii)_{\ii_I}\|_{K,\mathcal{J}}^p\Big).
\end{align}

We can now finish the proof using Lemma \ref{sumy_na_maxima}. We
apply it to $\E_{I^c}$ for $I \neq I_d$, with $\#I^c$ instead of $d$ and $p/2$ instead of $p$
(for $p = 2$ the theorem is trivial, so we can assume that $p >
2$) and $\alpha = 2\#I^c + \deg\mathcal{J} + \#I^c$. Using the fact that $(p/2)^{\alpha\#I^c} \le L_d^{p}$ and
$\E\|(h_\ii)_{\ii_I}\|_{K,\mathcal{J}}^2 \le \sum_{\ii_I}\E_I|h_\ii|^2$, we get
\begin{align*}
\E_{I^c}&\sum_{\ii_{I^c}}\|(h_\ii)_{\ii_I}\|_{K,\mathcal{J}}^p \le
p^{-\alpha p/2}\tilde{L}_d^p\Big(p^{\alpha p/2}\E_{I^c}\max_{\ii_{I^c}}
\|(h_\ii)_{\ii_I}\|_{K,\mathcal{J}}^p \\
&+ \max_{J \subsetneq I^c}p^{\#J
p/2}\E_{J}\max_{\ii_{J}}(\sum_{\ii_{I^c\backslash J}}
\E_{I^c\backslash J}
\|(h_\ii)_{\ii_{I}}\|_{K,\mathcal{J}}^2\Big)^{p/2}\\
&\le \bar{L}_d^p\Big(\E_{I^c}\max_{\ii_{I^c}}
\|(h_\ii)_{\ii_I}\|_{K,\mathcal{J}}^p \\
&+p^{-(\#I^c + \deg\mathcal{J}/2)p}\max_{J\subseteq I^c}
\E_{J}\max_{\ii_{J}}(\sum_{\ii_{J^c}}
\E_{J^c} |h(\xdi)|^2)^{p/2}\Big) \\
&= \tilde{L}_d^p\Big(\E_{I^c}\max_{\ii_{I^c}}
\|(h_\ii)_{\ii_I}\|_{K,\mathcal{J}}^p +p^{-(\#I^c + \deg\mathcal{J}/2)p}\max_{J\subseteq
I^c}
\E_{J}\max_{\ii_{J}}\|(h_\ii)_{\ii_{J^c}}\|_{J^c,\emptyset}^p\Big),
\end{align*}
which allows us to replace the sums in $\ii_{I^c}$ on the right-hand
side of (\ref{nierownosc_z_sumami}) by the corresponding maxima,
proving the inequality in question.
\end{proof}

Theorem \ref{moment_estimates} gives a precise estimate for
moments of canonical Hilbert space valued $U$-statistics. In the
sequel however we will need a weaker estimate, using the
$\|\cdot\|_{K,\mathcal{J}}$ norms only for $I = I_d$ and
specialized to the case $h_\ii = h$. Before we formulate a proper corollary, let us
introduce
\begin{defi}
Let $h \colon \Sigma^d \to H$ be a canonical kernel. Let moreover $X_1,X_2, \ldots, X_d$ be i.i.d random variables with values
in $\Sigma$. Denote $X = (X_1,\ldots,X_d)$ and for $J \subseteq I_d$, $X_J = (X_j)_{j\in J}$.
For $K \subseteq I \subseteq I_d$ and $\mathcal{J} = \{J_1,\ldots,J_k\} \in \mathcal{P}_{I\backslash K}$,
we define
\begin{align*}
\|h\|_{K,\mathcal{J}} = \sup\big\{&\E_I \langle h(X), g(X_K)\rangle\prod_{j=1}^k f_j(X_{J_j})\colon
g\colon \Sigma^{\#K} \to H,\; \E |g(X_K)|^2 \le 1,\\
& f_j\colon \Sigma^{\#J_j} \to \R, \; \E f_j(X_{J_j}))^2 \le 1,\; j = 1,\ldots,k\big\}.
\end{align*}
In other words $\|h\|_{K,\mathcal{J}}$ is the $\|\cdot\|_{K,\mathcal{J}}$ of an array $(h_\ii)_{|\ii| = 1}$, with
$h_{(1,\ldots,1)} = h$.
\end{defi}

\paragraph{Remark} For $I = I_d$, $\|h\|_{K,\mathcal{J}}$ is a norm, whereas for $I \subsetneq I_d$, it is a random variable, depending on $X_{I^c}$.

It is also easy to see that if all the variables $X_i^{(j)}$ are i.i.d. and for all $|\ii|\le n$ we have $h_\ii = h$, then for any fixed value of
$\ii_{I^c}$,
\begin{displaymath}
\|(h_\ii)_{|\ii_I|\le n}\|_{K,\mathcal{J}} = \|h\|_{K,\mathcal{J}} n^{\#I/2},
\end{displaymath}
where $\|h\|_{K,\mathcal{J}}$ is defined with respect to any i.i.d. sequence $X_1,\ldots,X_d$ of the form
$X_j = X^{(j)}_{i_j}$ for $j \in I^c$.

We also have $\|h\|_{K,\mathcal{J}} \le \sqrt{\E_I |h(X)|^2}$, which together with the above observations allows us to derive the following
\begin{cor} \label{simplified_moments}For all $p \ge 2$, we have
\begin{align*}
\E|\sum_{\ii} h(\xdi)|^p \le& L_d^p\Big(\sum_{K\subseteq
I_d}\sum_{\mathcal{J}\in \mathcal{P}_{I_d\backslash
K}}p^{p\deg\mathcal{J}/2}n^{dp/2}\|h\|_{K,\mathcal{J}}^p \\
&+ \sum_{I\subsetneq
I_d}p^{p(d+\#I^c)/2}n^{\#Ip/2}\E_{I^c}\max_{\ii_{I^c}}(\E_{I}|h(\xdi)|^2)^{p/2}\Big)
\end{align*}
\end{cor}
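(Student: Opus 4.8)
The plan is to derive Corollary~\ref{simplified_moments} directly from Theorem~\ref{moment_estimates} by specializing to the constant-kernel case $h_\ii = h$ and then pruning the terms with $I \subsetneq I_d$ that appear in the $\|\cdot\|_{K,\mathcal{J}}$-norms. First I would invoke Theorem~\ref{moment_estimates}, which bounds $\E|\sum_\ii h(\xdi)|^p$ by $L_d^p$ times the double sum over $K \subseteq I \subseteq I_d$ and $\mathcal{J} \in \mathcal{P}_{I \backslash K}$ of $p^{p(\#I^c + \deg\mathcal{J}/2)}\E_{I^c}\max_{\ii_{I^c}}\|(h_\ii)_{\ii_I}\|_{K,\mathcal{J}}^p$. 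The next step is to split this sum into the terms with $I = I_d$ and the terms with $I \subsetneq I_d$ and treat them separately.

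For the $I = I_d$ terms I would use the displayed identity preceding the corollary, namely that when all $X_i^{(j)}$ are i.i.d.\ and $h_\ii \equiv h$ one has $\|(h_\ii)_{|\ii_{I}| \le n}\|_{K,\mathcal{J}} = \|h\|_{K,\mathcal{J}}\, n^{\#I/2}$; since $I = I_d$ gives $\#I^c = 0$ and $n^{\#I/2} = n^{d/2}$, and there is no expectation or maximum over $\ii_{I^c}$ to worry about, these terms collapse exactly to $L_d^p \sum_{K \subseteq I_d}\sum_{\mathcal{J} \in \mathcal{P}_{I_d \backslash K}} p^{p\deg\mathcal{J}/2} n^{dp/2}\|h\|_{K,\mathcal{J}}^p$, which is the first group in the corollary. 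For the $I \subsetneq I_d$ terms, the same identity shows $\|(h_\ii)_{\ii_I}\|_{K,\mathcal{J}}$ (for fixed $\ii_{I^c}$) equals $\|h\|_{K,\mathcal{J}}\,n^{\#I/2}$ where now $\|h\|_{K,\mathcal{J}}$ is the random variable depending on $X_{I^c} = (X^{(j)}_{i_j})_{j \in I^c}$; then I would bound it crudely using $\|h\|_{K,\mathcal{J}} \le \sqrt{\E_I |h(X)|^2}$ (the inequality stated just before the corollary), so that $\|(h_\ii)_{\ii_I}\|_{K,\mathcal{J}}^p \le n^{\#I p/2}(\E_I |h(\xdi)|^2)^{p/2}$ after reinterpreting $\E_I |h(X)|^2$ as $\E_I|h(\xdi)|^2$ with the $\ii_{I^c}$-block fixed.

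Combining these, every $I \subsetneq I_d$ term is dominated by $p^{p(\#I^c + \deg\mathcal{J}/2)} n^{\#I p/2}\E_{I^c}\max_{\ii_{I^c}}(\E_I |h(\xdi)|^2)^{p/2}$; the exponent $\#I^c + \deg\mathcal{J}/2$ is maximized (over $\mathcal{J} \in \mathcal{P}_{I \backslash K}$ and $K$) when $K = \emptyset$ and $\mathcal{J}$ is the partition of $I$ into singletons, giving $\deg\mathcal{J} = \#I = d - \#I^c$, hence $\#I^c + \deg\mathcal{J}/2 \le \#I^c + (d - \#I^c)/2 = (d + \#I^c)/2$. Since the number of pairs $(K,\mathcal{J})$ is a constant depending only on $d$, summing over them costs only a further $L_d^p$ factor, and the $I \subsetneq I_d$ contribution is bounded by $L_d^p \sum_{I \subsetneq I_d} p^{p(d + \#I^c)/2} n^{\#I p/2}\E_{I^c}\max_{\ii_{I^c}}(\E_I |h(\xdi)|^2)^{p/2}$, which is exactly the second group. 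Adding the two groups and absorbing constants into $L_d^p$ finishes the proof. I expect no serious obstacle here; the only mildly delicate point is bookkeeping the identification of the random norm $\|h\|_{K,\mathcal{J}}$ with the quantity built from the i.i.d.\ sequence $X_j = X^{(j)}_{i_j}$, $j \in I^c$, and checking that the crude bound $\|h\|_{K,\mathcal{J}} \le \sqrt{\E_I|h(X)|^2}$ is uniform in $\ii_{I^c}$ so that it survives the $\E_{I^c}\max_{\ii_{I^c}}$ operation — but this is immediate once one writes out the definitions.
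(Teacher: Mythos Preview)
Your proposal is correct and follows exactly the approach the paper indicates: apply Theorem~\ref{moment_estimates}, use the identity $\|(h_\ii)_{|\ii_I|\le n}\|_{K,\mathcal{J}} = \|h\|_{K,\mathcal{J}}\,n^{\#I/2}$ for the $I=I_d$ terms, and for $I\subsetneq I_d$ combine that identity with the bound $\|h\|_{K,\mathcal{J}}\le \sqrt{\E_I|h(X)|^2}$ and the observation that $\#I^c+\deg\mathcal{J}/2\le (d+\#I^c)/2$. The paper states these ingredients in the paragraph preceding the corollary and leaves the assembly to the reader; you have carried it out correctly.
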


The Chebyshev inequality gives the following  corollary for
bounded kernels
\begin{cor} If $h$ is bounded, then for all $t \ge 0$,
\begin{align*}
\p\Big(|\sum_{\ii}h(\xdi)|&\ge L_d (n^{d/2}(\E|h|^2)^{1/2} + t)\Big)
\le\\
&L_d\exp\big[-\frac{1}{L_d}\Big(\min_{ K\subsetneq
I_d,\mathcal{J}\in\mathcal{P}_{I_d\backslash K}}
\Big(\frac{t}{n^{d/2}\|h\|_{K,\mathcal{J}}}\Big)^{2/\deg(\mathcal{J})}
\Big)\wedge\\
&\quad\quad\quad\wedge\Big(\min_{I\subsetneq
I_d}\Big(\frac{t}{n^{\#I/2}\|(\E_I
|h|^2)^{1/2}\|_\infty}\Big)^{2/(d+\#I^c)}\Big)\Big].
\end{align*}
\end{cor}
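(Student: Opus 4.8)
The plan is to deduce the tail bound directly from Corollary~\ref{simplified_moments} by a standard Chebyshev/moment-optimization argument. First I would rewrite the corollary's bound in the schematic form
\begin{displaymath}
\E\big|\sum_\ii h(\xdi)\big|^p \le \sum_{m} \big(L_d A_m\, p^{\beta_m}\big)^p,
\end{displaymath}
where the index $m$ runs over all pairs $(K,\mathcal{J})$ with $K\subseteq I_d$, $\mathcal{J}\in\mathcal{P}_{I_d\backslash K}$ (contributing $A_m = n^{d/2}\|h\|_{K,\mathcal{J}}$ and $\beta_m=\deg\mathcal{J}/2$) together with all $I\subsetneq I_d$ (contributing $A_m = n^{\#I/2}\|(\E_I|h|^2)^{1/2}\|_\infty$ and $\beta_m=(d+\#I^c)/2$). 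Since there are only $L_d$ summands, $\E|\sum_\ii h(\xdi)|^p \le (L_d \max_m A_m p^{\beta_m})^p$, and by Chebyshev $\p(|\sum_\ii h(\xdi)| \ge e L_d \max_m A_m p^{\beta_m}) \le e^{-p}$ for every $p\ge 2$.

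Next I would separate out the terms with $\beta_m = d/2$: these are the pairs with $K=I_d$ (so $\mathcal{J}=\emptyset$, giving $A_m = n^{d/2}\|h\|_{I_d,\emptyset} = n^{d/2}(\E|h|^2)^{1/2}$ from the Example), and, since these have $p$-exponent $d/2$ matching $n^{d/2}(\E|h|^2)^{1/2}$, they are exactly the ``Gaussian-type'' leading term that should be absorbed into $L_d n^{d/2}(\E|h|^2)^{1/2}$ rather than contribute to the exponential decay. For each of the remaining indices $m$ — i.e. $K\subsetneq I_d$ with $\mathcal{J}$ nonempty, or $I\subsetneq I_d$ — we have $\beta_m>0$, so given $t\ge 0$ we choose $p = p_m := (t/(L_d A_m))^{1/\beta_m}$, which makes $L_d A_m p_m^{\beta_m} \asymp t$; this is legitimate when $p_m\ge 2$, and if some $p_m<2$ then that term contributes a bounded factor and can be folded into the leading constant. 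Taking $p = \min_m p_m$ over the relevant indices and using that $\min_m p_m^{\beta_m}$ is controlled by the individual powers, Chebyshev yields
\begin{displaymath}
\p\Big(\big|\sum_\ii h(\xdi)\big| \ge L_d\big(n^{d/2}(\E|h|^2)^{1/2} + t\big)\Big) \le L_d \exp\Big[-\tfrac{1}{L_d}\min_m (t/A_m)^{1/\beta_m}\Big],
\end{displaymath}
and substituting back the explicit $A_m$, $\beta_m$ for the two families of indices gives precisely the two minima in the statement (the exponents $2/\deg\mathcal{J}$ and $2/(d+\#I^c)$ being $1/\beta_m$).

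The only genuinely delicate points are bookkeeping ones: one must check that the $K=I_d$ term is correctly identified with $(\E|h|^2)^{1/2}$ and pulled into the deterministic part (rather than producing a spurious term with a degenerate, i.e.\ zero, exponent in the exponential), and one must handle the regime $p_m < 2$ uniformly — this is the usual ``small $t$'' issue, resolved by noting that for $t$ below the relevant threshold the asserted bound is trivial since the right-hand side probability bound exceeds $1$ after adjusting $L_d$. Neither step is hard; the substance of the corollary is entirely contained in Corollary~\ref{simplified_moments}, and what remains is the routine moment-to-tail optimization. I expect the main obstacle to be purely organizational: keeping the two families of $(K,\mathcal{J})$-indices and $I$-indices, and their exponents, straight while performing the optimization, and making sure every term with exponent $d/2$ (and only those) is absorbed into the leading $n^{d/2}(\E|h|^2)^{1/2}$.
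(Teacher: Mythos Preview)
Your approach is correct and is exactly what the paper does --- it simply says ``The Chebyshev inequality gives the following corollary'' and offers no further proof, so your moment-to-tail optimization from Corollary~\ref{simplified_moments} is the intended argument. One small slip: for $K=I_d$ the exponent is $\beta_m=\deg(\emptyset)/2=0$, not $d/2$; this is precisely why that term carries no $p$-dependence and is absorbed into the deterministic part $L_d n^{d/2}(\E|h|^2)^{1/2}$, while all remaining terms have $\beta_m>0$ and produce the exponential decay, just as you then correctly argue.
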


Before we formulate the version of exponential inequalities that will be useful for the analysis of the LIL,
let us recall the classical definition of Hoeffding projections.

\begin{defi}
For an integrable kernel $h\colon \Sigma^d \to H$,
define $\pi_d h \colon \Sigma^k \to \R$ with the formula
\begin{displaymath}
\pi_d h(x_1,\ldots,x_k) = (\delta_{x_1} -
\mathbf{P})\times(\delta_{x_2}-\mathbf{P})\times\ldots\times(\delta_{x_d}-\mathbf{P})h,
\end{displaymath}
where $\mathbf{P}$ is the law of $X_1$.
\end{defi}

\paragraph{Remark} It is easy to see that $\pi_k h$ is canonical. Moreover $\pi_d h = h$ iff $h$ is canonical.

The following Lemma was proven for $H= \R$ in \cite{AmcLat} (Lemma 1). The proof given there works
for an arbitrary Banach space.
\begin{lemma}\label{moments_comparison}
Consider an arbitrary family of integrable kernels $h_{\ii} \colon
\Sigma^d \to H$, $|\ii| \le n$. For any
$p \ge 1$ we have
\begin{displaymath}
\big\|\sum_{|\ii|\le n}\pi_d h_\ii(\xdi)\big\|_p \le
2^d\big\|\sum_{|\ii|\le n}\edi h_\ii(\xdi)\big\|_p.
\end{displaymath}
\end{lemma}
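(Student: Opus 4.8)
The plan is to prove the bound $\|\sum_{|\ii|\le n}\pi_d h_\ii(\xdi)\|_p \le 2^d\|\sum_{|\ii|\le n}\edi h_\ii(\xdi)\|_p$ by induction on $d$, exploiting the product structure of the Hoeffding projection $\pi_d = \prod_{j=1}^d(\delta_{x_j}-\mathbf{P})$ one coordinate at a time. First I would recall that for each fixed coordinate $j$, the operator $(\delta_{X_j^{(j)}}-\mathbf{P})$ applied in the $j$-th variable can be realized as a symmetrization: if $(X_i^{(j)\prime})$ is an independent copy, then conditionally on everything else the kernel $(\delta_{X_{i_j}^{(j)}}-\delta_{X_{i_j}^{(j)\prime}})$ has the same distribution (after multiplying by a Rademacher sign $\varepsilon_{i_j}^{(j)}$) as $(\delta_{X_{i_j}^{(j)}}-\mathbf{P})$ composed with a contraction in $L^p$. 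More precisely, the standard fact is that for a martingale-difference-type object $\E'[f(X)-f(X')]=f(X)-\E f$, so by Jensen/conditional expansion $\|\sum (\delta_{X_{i_j}}-\mathbf{P})\text{-part}\|_p \le \|\sum (\delta_{X_{i_j}}-\delta_{X_{i_j}'})\text{-part}\|_p = \|\sum \varepsilon_{i_j}(\delta_{X_{i_j}}-\delta_{X_{i_j}'})\text{-part}\|_p$, and then one drops the primed copy at the cost of another factor, or rather keeps it and iterates. Each coordinate contributes a factor $2$, giving $2^d$ after $d$ steps; this is exactly where the constant comes from.

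The key steps in order: (1) Set up notation so that $\pi_d h_\ii = \prod_{j=1}^d (\delta_{X_{i_j}^{(j)}}-\mathbf{P}) h_\ii$, viewing this as nested applications. (2) Peel off the $d$-th coordinate: write $\pi_d h_\ii = (\delta_{X_{i_d}^{(d)}}-\mathbf{P})\big(\pi_{d-1}' h_\ii\big)$ where $\pi_{d-1}'$ denotes projection in coordinates $1,\dots,d-1$, and condition on the variables in coordinates $1,\dots,d-1$ together with the Rademachers there. (3) For the single coordinate $d$, apply the symmetrization inequality for sums of independent Banach-space-valued random variables: replacing $(\delta_{X_{i_d}^{(d)}}-\mathbf{P})$ by $\varepsilon_{i_d}^{(d)}(\delta_{X_{i_d}^{(d)}}-\delta_{\tilde X_{i_d}^{(d)}})$ with an independent copy $\tilde X$ and Rademacher $\varepsilon_{i_d}^{(d)}$; by the triangle inequality in $L^p$ and the fact that the two halves are identically distributed, this costs a factor $2$, and the term with $\tilde X$ can then be absorbed — actually the cleaner route is: $\|(\delta_{X}-\mathbf P)\text{-sum}\|_p \le \|(\delta_X - \tilde\delta_X)\text{-sum}\|_p$ (contraction by conditional Jensen, \emph{no} constant) $= \|\varepsilon(\delta_X-\tilde\delta_X)\text{-sum}\|_p \le 2\|\varepsilon\,\delta_X\text{-sum}\|_p$ (triangle inequality), so the factor $2$ per coordinate is honest. (4) After handling coordinate $d$ we have replaced it by a Rademacher-randomized increment; apply the induction hypothesis in the remaining $d-1$ coordinates to the kernels $h_\ii(\,\cdot\,,\dots,\,\cdot\,,X_{i_d}^{(d)})$ (now carrying the extra $\varepsilon_{i_d}^{(d)}$), picking up $2^{d-1}$. (5) Multiply: $2 \cdot 2^{d-1} = 2^d$. (6) Base case $d=1$: $\|\sum_i(\delta_{X_i}-\mathbf P)h_i\|_p \le 2\|\sum_i \varepsilon_i h_i(X_i)\|_p$, which is the classical symmetrization inequality for independent mean-zero (after centering) summands in a Banach space, valid with constant $2$.

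The main obstacle I anticipate is bookkeeping: making the "peel one coordinate, condition on the rest" argument precise when the inner object $\pi_{d-1}' h_\ii$ is itself a random kernel (it depends on $X^{(1)},\dots,X^{(d-1)}$) and ensuring Fubini applies so that one may condition and use one-dimensional symmetrization inside. One must be careful that the inner sum $\sum_{|\ii|\le n}$ ranges over a multi-index and that fixing $i_d$ and summing over $\ii_{I_{d-1}}$ yields exactly a $(d-1)$-dimensional decoupled object to which the induction hypothesis literally applies — this is where the decoupled (as opposed to undecoupled) formulation matters, since the coordinate copies $X_i^{(j)}$ are genuinely independent across $j$. A secondary technical point: justifying the contraction step $\|(\delta_X-\mathbf P)\text{-part}\|_p \le \|(\delta_X-\tilde\delta_X)\text{-part}\|_p$ requires the conditional Jensen inequality applied to the convex function $\|\cdot\|^p$ on $H$, which is standard but should be invoked cleanly. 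Since Lemma~\ref{moments_comparison} is quoted as being proven in \cite{AmcLat} for $H=\R$ with a proof that works verbatim for any Banach space, I would in the write-up simply point to that reference and indicate that the only ingredients are the one-dimensional symmetrization inequality and the conditional Jensen inequality, both of which are insensitive to the scalar-versus-Hilbert distinction; a full reproduction of the induction is unnecessary.
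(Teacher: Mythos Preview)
Your proposal is correct and matches the paper's treatment: the paper does not reprove the lemma but simply cites \cite{AmcLat} (Lemma 1) and remarks that the argument there, which is precisely the coordinate-by-coordinate symmetrization you sketch, works verbatim for any Banach space. Your outline of the induction---peel off one coordinate, apply the one-dimensional symmetrization inequality $\|\sum_i (Y_i - \E Y_i)\|_p \le 2\|\sum_i \varepsilon_i Y_i\|_p$ conditionally, then invoke the induction hypothesis on the remaining $d-1$ coordinates---is exactly the standard proof, and your final suggestion to simply point to the reference is what the paper does.
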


In the sequel we will use exponential inequalities to
$U$-statistics generated by $\pi_d h$, where $h$ will be a
non-necessarily canonical kernel of order $d$. Since the kernel
$\tilde{h}((\varepsilon_1,X_1),\ldots,(\varepsilon_d,X_d)) =
\varepsilon_1\cdots\varepsilon_dh(X_1,\ldots,X_d)$, where
$\varepsilon_i$'s are i.i.d. Rademacher variables independent of
$X_i$'s is always canonical, Corollary \ref{simplified_moments},
Lemma \ref{moments_comparison} and the Chebyshev inequality give
us also the following corollary (note that
$\|\tilde{h}\|_{K,\mathcal{J}} = \|h\|_{K,\mathcal{J}}$)

\begin{cor} \label{tail_Hoeffding}If $h$ is bounded, then for all $p \ge 0$,
\begin{align*}
\p\Big(\Big|\sum_{\ii}\pi_d h(\xdi)\Big|&\ge L_d (n^{d/2}(\E|h|^2)^{1/2} +
t)\Big)
\le\\
&L_d\exp\Big[-\frac{1}{L_d}\Big(\min_{ K\subsetneq
I_d,\mathcal{J}\in\mathcal{P}_{I_d\backslash K}}
\Big(\frac{t}{n^{d/2}\|h\|_{K,\mathcal{J}}}\Big)^{2/\deg(\mathcal{J})}
\Big)\wedge\\
&\quad\quad\quad\wedge\Big(\min_{I\subsetneq
I_d}\Big(\frac{t}{n^{\#I/2}\|(\E_I
|h|^2)^{1/2}\|_\infty}\Big)^{2/(d+\#I^c)}\Big)\Big].
\end{align*}
\end{cor}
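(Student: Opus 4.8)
The plan is to derive Corollary \ref{tail_Hoeffding} by combining the three ingredients already set up in the excerpt: the moment estimate of Corollary \ref{simplified_moments}, the comparison Lemma \ref{moments_comparison}, and Chebyshev's inequality, applied to the auxiliary canonical kernel
$\tilde h((\varepsilon_1,X_1),\ldots,(\varepsilon_d,X_d)) = \varepsilon_1\cdots\varepsilon_d h(X_1,\ldots,X_d)$.
First I would note that $\tilde h$ is canonical on $\Sigma'^d$ with $\Sigma' = \{-1,1\}\times\Sigma$, since conditioning on any single coordinate still leaves a free Rademacher factor whose expectation is zero. Moreover, the trivial identities $\E|\tilde h|^2 = \E|h|^2$, $\E_I|\tilde h|^2 = \E_I|h|^2$, $\|\tilde h\|_\infty = \|h\|_\infty$, and (as already remarked in the excerpt) $\|\tilde h\|_{K,\mathcal{J}} = \|h\|_{K,\mathcal{J}}$ — the latter because the Rademacher variables are independent of the $X_i$ and can be absorbed into the test functions $f_j$, $g$ — mean that all the quantities appearing in Corollary \ref{simplified_moments} for $\tilde h$ coincide with those for $h$.

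Next I would apply Corollary \ref{simplified_moments} to $\tilde h$ to bound $\E\big|\sum_{\ii}\edi h(\xdi)\big|^p = \E\big|\sum_\ii \tilde h(\xdi')\big|^p$, where $\xdi'$ is the decoupled sample built from the $(\varepsilon_{i_j}^{(j)},X_{i_j}^{(j)})$. By Lemma \ref{moments_comparison} (valid for arbitrary Banach, hence Hilbert, targets), $\big\|\sum_{|\ii|\le n}\pi_d h(\xdi)\big\|_p \le 2^d \big\|\sum_{|\ii|\le n}\edi h(\xdi)\big\|_p$, so the same moment bound, up to the factor $2^{dp}$ absorbed into $L_d^p$, holds for $\E\big|\sum_\ii \pi_d h(\xdi)\big|^p$. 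This gives, for every $p \ge 2$,
\begin{displaymath}
\E\Big|\sum_\ii \pi_d h(\xdi)\Big|^p \le L_d^p\Big(\sum_{K\subseteq I_d}\sum_{\mathcal{J}\in\mathcal{P}_{I_d\backslash K}} p^{p\deg\mathcal{J}/2} n^{dp/2}\|h\|_{K,\mathcal{J}}^p + \sum_{I\subsetneq I_d} p^{p(d+\#I^c)/2} n^{\#Ip/2}\|(\E_I|h|^2)^{1/2}\|_\infty^p\Big),
\end{displaymath}
where in the second sum I used $\E_{I^c}\max_{\ii_{I^c}}(\E_I|h(\xdi)|^2)^{p/2} \le \|(\E_I|h|^2)^{1/2}\|_\infty^p$ since all marginals are identical, and in the first sum the term $K = I_d$, $\mathcal{J} = \emptyset$ contributes $n^{dp/2}(\E|h|^2)^{p/2}$, which is the "linear" part.

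Finally I would convert this into a tail bound by the standard Chebyshev/optimization argument: writing $\p(|S| \ge L_d(n^{d/2}(\E|h|^2)^{1/2} + t)) \le (L_d(n^{d/2}(\E|h|^2)^{1/2}+t))^{-p}\E|S|^p$ and, for each $t$, choosing $p$ to roughly balance against the dominant term. Concretely, for each pair $(K,\mathcal{J})$ with $K\subsetneq I_d$ one chooses $p \sim (t/(n^{d/2}\|h\|_{K,\mathcal{J}}))^{2/\deg\mathcal{J}}$, and for each $I\subsetneq I_d$ one chooses $p \sim (t/(n^{\#I/2}\|(\E_I|h|^2)^{1/2}\|_\infty))^{2/(d+\#I^c)}$; taking the minimum over all these choices and over $p\ge 2$ (the range $p<2$ being covered by the constant $L_d$ in front) yields exactly the claimed exponent. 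The terms with $K = I_d$ are handled by the $n^{d/2}(\E|h|^2)^{1/2}$ shift on the left-hand side, since $\|h\|_{I_d,\mathcal{J}} \le (\E|h|^2)^{1/2}$ forces them into the "Gaussian" regime already absorbed. This last optimization step is the only place requiring care — it is the routine but slightly tedious passage from a family of polynomial-in-$p$ moment bounds to a mixed exponential tail, exactly as in \cite{Amc, GLZ}; I expect no genuine obstacle, only bookkeeping of constants depending on $d$.
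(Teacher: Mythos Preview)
Your proposal is correct and follows exactly the approach the paper indicates: apply Corollary \ref{simplified_moments} to the canonical kernel $\tilde h = \varepsilon_1\cdots\varepsilon_d h$, transfer to $\pi_d h$ via Lemma \ref{moments_comparison}, and convert moments to tails by Chebyshev with the standard optimization in $p$. You supply more detail than the paper (which simply cites these three ingredients and the identity $\|\tilde h\|_{K,\mathcal{J}} = \|h\|_{K,\mathcal{J}}$), but the argument is identical.
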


\section{The equivalence of several LIL statements\label{LIL_statements_equiv}}
In this section we will recall general results on the
correspondence of various statements of the LIL. We will state
them without proofs, since all of them have been proven in
\cite{GZh} and \cite{AmcLat} in the real case and the proofs can
be directly transferred to the Hilbert space case, with some
simple modifications that we will indicate.

Before we proceed, let us introduce the assumptions and notation common for the remaining part of the article.

\begin{itemize}
\item We assume that $(X_i)_{i\in \N}$, $(X_i^{(k)})_{i\in \N,
1 \le k \le d}$ are i.i.d. and $h\colon\Sigma^d \to H$ is a measurable function.

\item Recall that $(\varepsilon_i)_{i\in \N}$, $(\varepsilon_i^{(k)})_{i\in \N,
1 \le k \le d}$ are independent Rademacher variables, independent of $(X_i)_{i\in \N}$, $(X_i^{(k)})_{i\in \N,
1 \le k \le d}$.

\item To avoid technical problems with small values of $h$ let us also
define $\llog x = \loglog (x\vee e^e)$.

\item We will also occasionally write $X$ for $(X_1,\ldots, X_d)$ and
for $I\subseteq I_d$, $X_I = (X_i)_{i\in I}$. Sometimes we will
write simply $h$ instead of $h(X)$.

\item We will use the letter $K$ to denote constants depending only on the function $h$.
\end{itemize}

We will need the following simple fact
\begin{lemma}\label{nowy_lemat_nie_mam_sily_wymyslac_nazwy}
If $\E|h|^2/(\llog |h|)^d = K < \infty$ then $\E (|h|^2\wedge u)
\le L (\loglog u)^d$ with $L$ depending only on $K$ and $d$.
\end{lemma}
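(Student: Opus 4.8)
The plan is to prove the bound $\E(|h|^2 \wedge u) \le L(\loglog u)^d$ for $u \ge e^e$ (so that $\loglog u \ge 1$) by splitting the expectation according to the size of $|h|$, using the layer-cake formula together with the tail control that the hypothesis $\E|h|^2/(\llog|h|)^d = K < \infty$ gives on the distribution of $|h|$.

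First I would write
\begin{displaymath}
\E(|h|^2\wedge u) = \int_0^u \p(|h|^2 > t)\,dt = \int_0^{u}\p(|h|^2 > t)\,dt,
\end{displaymath}
and split the range of integration at some threshold $t_0$ depending on $u$ — the natural choice being $t_0$ comparable to $u/(\loglog u)^d$ or simply a fixed constant like $e^{2e}$, whichever is cleaner. On the lower part $t \le t_0$ we bound $\p(|h|^2 > t) \le 1$, contributing at most $t_0$. On the upper part, I would use that finiteness of $K$ forces $\p(|h|^2 > t)$ to decay: by Chebyshev applied to the (increasing, for $t$ large) function $x \mapsto x/(\loglog \sqrt{x})^d = x/(\llog |h| \text{-type quantity})$, we get
\begin{displaymath}
\p(|h|^2 > t) = \p\Big(\frac{|h|^2}{(\llog|h|)^d} > \frac{t}{(\llog \sqrt t)^d}\Big) \le K\,\frac{(\llog\sqrt t)^d}{t}
\end{displaymath}
for $t$ exceeding an absolute constant (so that $t \mapsto t/(\llog\sqrt t)^d$ is increasing and $|h|^2 > t$ implies the displayed event). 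Then $\int_{t_0}^{u} K (\llog\sqrt t)^d/t\,dt \le K \int_{t_0}^u (\loglog t)^d\, \frac{dt}{t}$, and since $\int (\loglog t)^d \frac{dt}{t}$ has antiderivative on the order of $(\log\log t)\cdot(\log t)$... no — more carefully, substituting $s = \log t$, this is $\int (\log s)^d\, ds$, which is $O(s(\log s)^d) = O(\log u\,(\loglog u)^d)$. That is too big; so the fixed-threshold split is wrong, and one must instead choose $t_0$ large, of order $u$ up to the logarithmic correction, so that the bound $\p(|h|^2>t)\le 1$ is used on $[0,t_0]$ with $t_0 \asymp (\loglog u)^d$ and the tail integral on $[t_0,u]$ is what we must estimate.

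Let me reorganize: take $t_0 = u$ is pointless; instead the right idea is to NOT integrate the tail bound but rather to observe that $|h|^2\wedge u \le |h|^2 \ind{|h|^2 \le u} + u\ind{|h|^2 > u}$, and then $\E|h|^2\ind{|h|^2\le u} \le (\llog\sqrt u)^d \, \E\frac{|h|^2}{(\llog|h|)^d}\ind{|h|^2\le u} \le K(\loglog\sqrt u)^d \le L(\loglog u)^d$ — wait, this requires $(\llog|h|)^d \le (\llog\sqrt u)^d$ on the event $|h|^2 \le u$, which holds since $\llog$ is increasing and $|h| \le \sqrt u$. And $u\,\p(|h|^2 > u) \le u \cdot K(\llog\sqrt u)^d/u = K(\loglog\sqrt u)^d$ by the Chebyshev step above (valid once $u$ is past an absolute constant; the range $u < e^e$ is handled trivially since then $\loglog(u\vee e^e)=1$ and $\E(|h|^2\wedge u) \le u \le e^e$). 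Combining the two pieces and absorbing constants, $\E(|h|^2 \wedge u) \le L(\loglog u)^d$ with $L$ depending only on $K$ and $d$. The only genuine point requiring care is the monotonicity needed to pass from $\{|h|^2 > u\}$ to the event on which Chebyshev is applied — i.e., that $x \mapsto x/(\loglog\sqrt x)^d$ is eventually increasing — which holds for $x$ larger than some $d$-dependent constant, and the small-$u$ regime is disposed of directly; there is no real obstacle beyond this bookkeeping.
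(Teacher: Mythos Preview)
The paper does not actually prove this lemma; it is introduced as a ``simple fact'' and stated without argument. Your final approach---writing $|h|^2\wedge u = |h|^2\ind{|h|^2\le u}+u\ind{|h|^2>u}$, bounding the first piece by $(\llog\sqrt u)^d\,\E[|h|^2/(\llog|h|)^d]=K(\llog\sqrt u)^d$ (since $\llog|h|\le\llog\sqrt u$ on that event), and the second by Markov's inequality applied to $|h|^2/(\llog|h|)^d$ together with the eventual monotonicity of $t\mapsto t/(\llog\sqrt t)^d$---is correct and is surely what the authors had in mind. Your initial layer-cake attempt was rightly abandoned (integrating the tail bound produces an extra $\log u$), and the small-$u$ regime is indeed trivial; in the paper the lemma is only invoked for large $u$.
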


The next lemma comes from \cite{GZh}. It is proven there for $H = \R$ but the argument is valid also for general Banach spaces.
\begin{lemma}\label{GineZhang}
Let $h\colon \Sigma^d \to H$ be a symmetric function. There exist
constants $L_d$, such that if
\begin{equation}\label{lil}
\limsup_{n\to \infty} \frac{1}{(n\loglog n)^{d/2}}\big|\sum_{\ii
\in \nodiag{n}} h(\xui)\big| < C \; {\rm a.s.},
\end{equation}
then
\begin{equation}\label{first_series}
\sum_{n=1}^\infty \p\big(\big|\sum_{|\ii| \le 2^n} \edi
h(\xdi)\big| \ge D 2^{nd/2}\log^{d/2} n\big) < \infty
\end{equation}
for $D = L_d C$.
\end{lemma}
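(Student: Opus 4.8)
The goal is to pass from the almost-sure boundedness of the normalized undecoupled $U$-statistic along the full sequence $n$ to summability of a decoupled, randomized tail probability along the lacunary subsequence $2^n$. The plan is to carry this out in three stages: (i) a decoupling and symmetrization step reducing the undecoupled statistic to a decoupled randomized one, (ii) a blocking/subsequence step that converts the almost-sure $\limsup$ into an integrability/tail statement along $n=2^k$, and (iii) a Borel--Cantelli-type argument exploiting independence of blocks to upgrade ``finite limsup'' to ``summable tails''.

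First I would recall the classical decoupling inequalities of de la Pe\~na--Montgomery-Smith (\cite{dlp1,dlp3}): there are constants $L_d$ such that the tail of $|\sum_{\ii\in\nodiag n} h(\xui)|$ is comparable, up to multiplicative and level constants depending only on $d$, to that of $|\sum_{|\ii|\le n} h(\xdi)|$, and a symmetrization (contraction) step inserting the Rademacher factors $\edi$ costs only a factor $2^d$ in moments (Lemma \ref{moments_comparison} already records one direction of this). Combining these, (\ref{lil}) implies
\begin{displaymath}
\limsup_{n\to\infty}\frac{1}{(n\loglog n)^{d/2}}\Big|\sum_{|\ii|\le n}\edi h(\xdi)\Big| < L_d C\quad\text{a.s.}
\end{displaymath}
The next step is to replace the continuous parameter $n$ by the dyadic blocks. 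Because $n\mapsto |\sum_{|\ii|\le n}\edi h(\xdi)|$ need not be monotone, one uses a maximal inequality for (randomized) $U$-statistics over the block $2^{k-1}< n\le 2^k$ — a L\'evy-- or Montgomery-Smith-type maximal inequality, valid since the summands are conditionally symmetric given the $X$'s — to dominate the supremum over the block by a constant times the value at the endpoint $n=2^k$. This yields
\begin{displaymath}
\limsup_{k\to\infty}\frac{1}{2^{kd/2}\log^{d/2}k}\Big|\sum_{|\ii|\le 2^k}\edi h(\xdi)\Big| < L_d C\quad\text{a.s.},
\end{displaymath}
after absorbing the $(\loglog 2^k)^{d/2}\sim (\log k)^{d/2}$ factor.

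The heart of the argument, and the step I expect to be the main obstacle, is turning this a.s.\ bound into \emph{summability} of the tail series (\ref{first_series}) with a clean constant $D=L_dC$. The difficulty is that, unlike for sums of independent variables indexed by disjoint blocks, the dyadic truncations $\sum_{|\ii|\le 2^k}$ are nested and hence strongly dependent; one cannot directly invoke the second Borel--Cantelli lemma. The standard remedy is to decompose $\sum_{|\ii|\le 2^k} = \sum_{|\ii|\le 2^{k-1}} + (\text{block increment})$ and to observe that the block increments $T_k := \sum_{|\ii|\le 2^k}\edi h(\xdi) - \sum_{|\ii|\le 2^{k-1}}\edi h(\xdi)$ involve, for each fixed coordinate axis, disjoint index ranges across a sparse set of $k$'s (e.g.\ all even $k$, or a further thinning), so that along such a subsequence the $T_k$ are independent. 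A multilinear/Hoeffding decomposition of $h$ into its canonical components $\pi_m h$ ($0\le m\le d$) lets one handle each degree of degeneracy separately; the purely canonical part is the essential one and the lower-order parts are controlled by lower-dimensional LILs. If the series (\ref{first_series}) diverged, then $\p(|T_k|\ge c\,2^{kd/2}\log^{d/2}k)$ would diverge along the independent subsequence (using that the increment carries a fixed positive fraction of the mass, by a minoration/anti-concentration estimate such as the Paley--Zygmund inequality together with the hypercontractive moment comparison for Rademacher chaoses), forcing $\limsup_k 2^{-kd/2}(\log k)^{-d/2}|T_k| = \infty$ a.s.\ by Borel--Cantelli, which via the triangle inequality contradicts the a.s.\ bound displayed above. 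Tracking constants through the hypercontractivity and Paley--Zygmund steps gives $D$ of the form $L_d C$. I would organize the write-up so that steps (i)--(ii) are quoted essentially verbatim from \cite{GZh,AmcLat} (as the text announces), and present (iii) — the dependence-breaking and the minoration — as the one place requiring care in the Hilbert-space setting, where the only genuine change is using $|\cdot|$-valued hypercontractivity for Rademacher chaos with coefficients in $H$, which holds with the same constants as in the scalar case.
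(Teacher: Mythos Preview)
The paper does not prove this lemma; it records it as a result of Gin\'e--Zhang \cite{GZh} and simply remarks that their real-valued argument carries over to Banach spaces verbatim. So there is no in-paper proof to compare against, and the question is whether your sketch stands on its own.

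Your high-level architecture (decoupling/randomization, passage to dyadic blocks, and a Borel--Cantelli argument on independent pieces) is indeed the Gin\'e--Zhang strategy. Two points in your step (iii), however, are genuine gaps rather than details to be filled in routinely.

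First, the block increments
\[
T_k \;=\; \sum_{|\ii|\le 2^k}\edi h(\xdi)\;-\;\sum_{|\ii|\le 2^{k-1}}\edi h(\xdi)
\]
are \emph{not} independent, even after thinning to a sparse set of $k$'s. The index set for $T_k$ is $\{\ii:\max_l i_l\in(2^{k-1},2^k]\}$, and every such $\ii$ may have some coordinates $\le 2^{k-1}$; hence $T_k$ depends on \emph{all} variables $X_i^{(j)}$ with $i\le 2^k$, and any two $T_k,T_{k'}$ share variables. The objects that are genuinely independent across $k$ are the \emph{pure corners}
\[
V_k \;=\; \sum_{\ii\in(2^{k-1},2^k]^d}\edi h(\xdi),
\]
which use only $X_i^{(j)}$ with $i\in(2^{k-1},2^k]$. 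This is what the Gin\'e--Zhang argument exploits.

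Second, your passage ``if the series (\ref{first_series}) diverged, then $\p(|T_k|\ge c\,a_k)$ would diverge \ldots\ by Paley--Zygmund and hypercontractivity'' is not justified as stated. Paley--Zygmund plus hypercontractivity gives a lower bound of the form $\p(|S|\ge c\|S\|_2)\ge c_d$, which does not by itself compare the tail of the full sum $S_k$ to that of a sub-block. What one actually uses is a distributional comparison: by the i.i.d.\ structure, $V_k$ has the same law as $S_{k-1}$, and by splitting $[1,2^k]^d$ into $2^d$ congruent sub-boxes one gets the elementary union bound $\p(|S_k|\ge 2^d t)\le 2^d\,\p(|V_k|\ge t)$. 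It is this (together with the second Borel--Cantelli lemma for the independent $V_k$, and then decoupling back to the undecoupled statistic) that closes the argument, not a Paley--Zygmund minoration on the increments. Once you replace $T_k$ by $V_k$ and make this comparison explicit, your outline becomes the Gin\'e--Zhang proof.
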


\begin{lemma}\label{lil_decoupledLIL}
For a symmetric function $h\colon \Sigma^d \to H$, the LIL
(\ref{lil}) is equivalent to the decoupled LIL
\begin{equation}\label{decoupled_lil}
\limsup_{n\to \infty}\frac{1}{(n\loglog n)^{d/2}}\big|\sum_{\ii
\in \nodiag{n}} h(\xdi)\big| < D\; {\rm a.s.},
\end{equation}
meaning that (\ref{lil}) implies (\ref{decoupled_lil}) with $D =
L_dC$, and conversely (\ref{decoupled_lil}) implies (\ref{lil})
with $C = L_d D$.
\end{lemma}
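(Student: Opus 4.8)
The plan is to establish the equivalence of the undecoupled LIL \eqref{lil} and the decoupled LIL \eqref{decoupled_lil} via standard decoupling inequalities combined with a Borel--Cantelli argument along a geometric subsequence. The easy direction is \eqref{lil}$\Rightarrow$\eqref{decoupled_lil}: the decoupling inequalities of de la Pe\~na--Montgomery-Smith (see \cite{dlp2,dlp1}) give, for every $n$ and every convex function $\Phi$,
\[
\E\,\Phi\Big(\big|\sum_{\ii\in\nodiag{n}}h(\xdi)\big|\Big)\le \E\,\Phi\Big(C_d\big|\sum_{\ii\in\nodiag{n}}h(\xui)\big|\Big),
\]
and in particular a tail comparison $\p(|\sum_{\ii\in\nodiag{n}}h(\xdi)|\ge t)\le C_d\,\p(|\sum_{\ii\in\nodiag{n}}h(\xui)|\ge t/C_d)$. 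Hence if \eqref{lil} holds with constant $C$, then for any $C'>C$ the events $\{|\sum_{\ii\in\nodiag{2^n}}h(\xui)|\ge C'(2^n\loglog 2^n)^{d/2}\}$ occur only finitely often; by the Borel--Cantelli lemma (using that the decoupled statistic along $2^n$ controls all $m\in[2^{n-1},2^n]$ by a maximal/monotonicity argument) one concludes \eqref{decoupled_lil} with $D=L_dC$. I would need to be slightly careful that the decoupling inequality I invoke is the tail version valid for (non-canonical) $U$-statistics; the symmetrized version together with Lemma~\ref{moments_comparison} could also be used to pass through randomized statistics.

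For the converse direction \eqref{decoupled_lil}$\Rightarrow$\eqref{lil}, the key point is that decoupling inequalities go \emph{both ways} up to constants for completely degenerate (canonical) kernels: by the reverse decoupling inequality of de la Pe\~na--Montgomery-Smith there is $L_d$ with
\[
\p\Big(\big|\sum_{\ii\in\nodiag{n}}h(\xui)\big|\ge t\Big)\le L_d\,\p\Big(\big|\sum_{\ii\in\nodiag{n}}h(\xdi)\big|\ge t/L_d\Big)
\]
\emph{provided $h$ is canonical}. The general (non-canonical, symmetric) $h$ is reduced to the canonical case through the Hoeffding decomposition: writing $\sum_{\ii\in\nodiag{n}}h(\xui)=\sum_{k=0}^{d}\binom{d}{k}\sum_{\jj\in\nodiag{n}}(\pi_k h)(\mathbf X_{\jj})$ with each $\pi_k h$ canonical of order $k$, and recalling that the normalization $(n\loglog n)^{d/2}$ is exactly the right LIL scale for the order-$d$ component while the lower-order components have a strictly smaller LIL rate $(n\loglog n)^{k/2}=o((n\loglog n)^{d/2})$, the LIL for the full statistic is equivalent to the LIL for the top canonical piece $\pi_d h$. (That lower-order pieces do not contribute is itself an induction on $d$, the base case $d=1$ being the classical Hartman--Wintner theorem in $H$ together with the hypothesis that the decoupled LIL holds, which forces $\E\pi_1 h\,(X)=0$ and finiteness of the relevant second moments.) So it suffices to prove the equivalence for canonical $h$, where both decoupling directions are available.

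For canonical $h$, I would then run exactly the same Borel--Cantelli / blocking scheme as in the first paragraph but in the reverse direction: assuming \eqref{decoupled_lil} with constant $D$, the reverse decoupling tail inequality shows $\sum_n\p(|\sum_{\ii\in\nodiag{2^n}}h(\xdi)|\ge D'2^{nd/2}\log^{d/2}n)<\infty$ for $D'=L_dD$ (this is essentially Lemma~\ref{GineZhang} read in the decoupled-to-undecoupled direction), hence $\sum_n\p(|\sum_{\ii\in\nodiag{2^n}}h(\xui)|\ge L_dD'2^{nd/2}\log^{d/2}n)<\infty$, and by Borel--Cantelli plus a L\'evy-type maximal inequality to fill in the dyadic blocks $[2^{n-1},2^n]$ one gets \eqref{lil} with $C=L_dD$. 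The maximal inequality over the block is where one uses that undecoupled randomized $U$-statistics satisfy a L\'evy--Ottaviani inequality; alternatively one transfers the maximal control from the decoupled side, where it is standard, back via decoupling applied to the running maximum.

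The main obstacle, and the step deserving the most care, is the reduction via Hoeffding decomposition: one has to verify that the lower-order canonical components genuinely contribute a lower-order term to the $\limsup$ at scale $(n\loglog n)^{d/2}$, which requires knowing (or deriving, by the same circle of ideas applied in lower dimension) that each $\pi_k h$ with $k<d$ satisfies its \emph{own} bounded LIL at scale $(n\loglog n)^{k/2}$ under the standing integrability assumptions — this is where the induction on $d$ is really used and where the moment/tail estimates of Section~\ref{nierownosci} (Corollary~\ref{tail_Hoeffding}) enter to control the relevant finiteness conditions. Everything else is the by-now routine combination of two-sided decoupling for canonical kernels, Borel--Cantelli along $2^n$, and a maximal inequality over dyadic blocks.
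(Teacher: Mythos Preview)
Your argument has a genuine gap in the Borel--Cantelli step. You write that \eqref{lil} implies the undecoupled events along $2^n$ occur only finitely often, then invoke tail decoupling and Borel--Cantelli to conclude the decoupled events occur only finitely often. But tail decoupling compares \emph{probabilities} at each fixed $n$; it does not transfer ``only finitely often a.s.''\ from one sequence of events to another. To make your scheme work you would first need to pass from the a.s.\ statement \eqref{lil} to a \emph{summable series} of probabilities (this is precisely the content of Lemma~\ref{GineZhang}, which is nontrivial and involves randomization), then apply decoupling termwise, and only then use Borel--Cantelli and a maximal inequality on the other side. As written, you are using Borel--Cantelli in the wrong direction. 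The same issue affects the converse implication.

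Separately, your reduction to the canonical case via the Hoeffding decomposition is unnecessary: the two-sided tail decoupling of de~la~Pe\~na and Montgomery--Smith \cite{dlp3,dlp2} holds for arbitrary (not necessarily degenerate) Banach-valued kernels $h_\ii$, so there is no need to isolate $\pi_d h$ or to run an induction on $d$ handling lower-order projections.

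The paper's route avoids Borel--Cantelli entirely. One views, for each $N$, the vector of normalized partial sums
\[
\Big(\tfrac{1}{(m\loglog m)^{d/2}}\sum_{\ii\in\nodiag{m}} h(\xui)\Big)_{m\le N}
=\sum_{\ii\in\nodiag{N}} g_\ii(\xui),
\qquad g_\ii(x)=\Big(\tfrac{h(x)\ind{|\ii|\le m}}{(m\loglog m)^{d/2}}\Big)_{m\le N},
\]
as a single $U$-statistic with values in $l^\infty(\{1,\dots,N\},H)$. Applying two-sided tail decoupling in this Banach space (this is the ``replace $l^\infty$ by $l^\infty(H)$'' in the paper's one-line proof, referring to \cite{AmcLat}) gives, uniformly in $N$,
\[
\p\Big(\max_{m\le N}\tfrac{|S_m^{\rm dec}|}{(m\loglog m)^{d/2}}\ge L_d t\Big)
\le L_d\,\p\Big(\max_{m\le N}\tfrac{|S_m|}{(m\loglog m)^{d/2}}\ge t\Big),
\]
and the reverse. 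Letting $N\to\infty$ and using that the $\limsup$ is a.s.\ constant by the Hewitt--Savage zero--one law, one obtains $c^{\rm dec}\le L_d\,c^{\rm und}$ (and conversely) directly, with no series condition or dyadic blocking needed.
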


\begin{proof}
This is Lemma 8 in \cite{AmcLat}. The proof is the same as there,
one needs only to replace $l^\infty$ with $l^\infty(H)$ -- the
space of bounded $H$-valued sequences.
\end{proof}

The next lemma also comes from \cite{AmcLat} (Lemma 9). Although stated for
real kernels, its proof relies on an inductive argument with a
stronger, Banach-valued hypothesis.

\begin{lemma}\label{Montgomery_iterated}
There exists a universal constant $L < \infty$, such that for any
kernel $h\colon \Sigma^d \to H$ we have
\begin{displaymath}
\p\big(\max_{|\jj|\le n} \big|\sum_{\ii\colon i_k\le j_k ,
k=1\ldots d}h(\xdi)\big|\ge t\big) \le
L^d\p\big(\big|\sum_{|\ii|\le n} h(\xdi)\big| \ge t/L^d\big).
\end{displaymath}
\end{lemma}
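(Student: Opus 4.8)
\textbf{Proof plan for Lemma \ref{Montgomery_iterated}.}

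The plan is to proceed by induction on $d$, proving the statement simultaneously for all separable Banach (or Hilbert) spaces $H$, which is what makes the induction go through. For $d=1$ the claim is exactly the classical Lévy--Ottaviani--Montgomery-Smith maximal inequality for sums of independent $H$-valued random variables: $\p(\max_{j\le n}|\sum_{i\le j} h(X_i^{(1)})| \ge t) \le L\,\p(|\sum_{i\le n} h(X_i^{(1)})|\ge t/L)$. (One can also get this from the Lévy inequality after symmetrization, or cite Montgomery-Smith's version which holds with universal constants and no symmetry assumption.)

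For the inductive step, fix $d\ge 2$ and write $\ii = (\ii',i_d)$ with $\ii' = (i_1,\dots,i_{d-1})$. Condition on the variables $\mathbf{X}^{\rm dec}$ in coordinates $1,\dots,d-1$ and apply the $d=1$ inequality in the last coordinate to the $H$-valued summands $Y_{i_d} := \sum_{\ii'\colon i_k \le j_k, k\le d-1} h(\mathbf{X}^{\rm dec}_{(\ii',i_d)})$; here one should first observe that for the purpose of taking the maximum over $|\jj|\le n$ we may split $\max_{|\jj|\le n} = \max_{j_d \le n}\max_{|\jj'|\le n}$, and that $\sum_{i_d\le j_d} Y_{i_d}$ is exactly the partial sum appearing inside. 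This reduces, after integrating back out, a maximum over $j_d$ to a full sum over $i_d\le n$, at the cost of a factor $L$, leaving us with $\p(\max_{|\jj'|\le n}|\sum_{\ii'\colon i_k\le j_k}\sum_{i_d\le n} h(\mathbf{X}^{\rm dec}_\ii)| \ge t/L)$. Now apply the induction hypothesis in the remaining $d-1$ coordinates to the array of $H$-valued kernels $\tilde h_{\ii'} := \sum_{i_d\le n} h(\mathbf{X}^{\rm dec}_{(\ii',i_d)})$ (these are genuine functions of $\mathbf{X}^{\rm dec}$ in coordinates $1,\dots,d-1$, and the hypothesis is stated for arbitrary $H$), converting the remaining maximum over $|\jj'|\le n$ into the full sum $|\sum_{|\ii|\le n} h(\mathbf{X}^{\rm dec}_\ii)|$ at the cost of a further factor $L^{d-1}$. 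Composing gives the bound $L^d\,\p(|\sum_{|\ii|\le n} h(\mathbf{X}^{\rm dec}_\ii)| \ge t/L^d)$, as claimed.

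The main technical point — and the only thing that needs care — is the bookkeeping when one peels off coordinates one at a time: one must check that the event $\{\max_{|\jj|\le n}|\sum_{\ii\colon i_k\le j_k}h| \ge t\}$ genuinely decomposes so that applying a one-dimensional maximal inequality in coordinate $d$ (with everything else frozen, including the outer maximum over $j_1,\dots,j_{d-1}$) and then the $(d-1)$-dimensional hypothesis is legitimate. This works because $\max_{|\jj|\le n}$ over the box equals the nested maximum, and the one-dimensional inequality can be applied pointwise in the conditioning variables and then integrated, the constant $L$ being universal. No integrability or symmetry assumption on $h$ is needed since the one-dimensional inequality we invoke (Montgomery-Smith) requires none; the only role of separability of $H$ is to make the suprema over the finite index set measurable, which is automatic here. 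Thus the induction closes with constant $L^d$.
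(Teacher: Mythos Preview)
Your approach is correct and is exactly the one the paper alludes to: an induction on $d$ with the hypothesis stated for arbitrary Banach targets, the base case being Montgomery--Smith's maximal inequality, and the inductive step peeling off one coordinate.

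One point deserves to be made precise. In your first peeling step you write that you apply the $d=1$ inequality to ``the $H$-valued summands $Y_{i_d}$'', but $Y_{i_d}$ depends on $\jj' = (j_1,\dots,j_{d-1})$, and applying Montgomery--Smith in $H$ separately for each fixed $\jj'$ does \emph{not} yield a bound on the probability of $\{\max_{\jj'}\max_{j_d}|\cdot|\ge t\}$; you cannot pass a maximum over $\jj'$ through the probability. What you must do (and what your final paragraph hints at with ``including the outer maximum over $j_1,\dots,j_{d-1}$'') is regard the summands $Z_{i_d} := (Y_{i_d}^{\jj'})_{|\jj'|\le n}$ as taking values in the finite-dimensional Banach space $B' := \ell^\infty(\{\jj' : |\jj'|\le n\}, H)$ with the sup norm, so that $\max_{\jj'}|\,\cdot\,|_H = \|\cdot\|_{B'}$, and then apply the one-dimensional Montgomery--Smith inequality in $B'$ to the independent $B'$-valued sequence $(Z_{i_d})_{i_d}$, conditionally on coordinates $1,\dots,d-1$. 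This is precisely why the argument requires a Banach-valued hypothesis and is what the paper means by ``a stronger, Banach-valued hypothesis''. With this correction the rest of your outline (conditioning on coordinate $d$ and applying the $(d-1)$-dimensional hypothesis to the $H$-valued kernel $\tilde h = \sum_{i_d\le n} h(\,\cdot\,,X_{i_d}^{(d)})$) goes through verbatim and yields the constant $L^d$.
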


\begin{cor}\label{from_series_to_lil}
Consider a kernel $h \colon \Sigma^d \to H$ and $\alpha > 0$. If
\begin{displaymath}
\sum_{n=1}^\infty\p(|\sum_{|\ii|\le 2^n} h(\xdi)| \ge
C2^{n\alpha}\log^\alpha n) < \infty,
\end{displaymath}
then
\begin{displaymath}
\limsup_{n\to \infty}\frac{1}{(n\loglog
n)^\alpha}\big|\sum_{|\ii|\le 2^n} h(\xdi) \big| \le L_{d,\alpha}C
\; {\rm a.s.}
\end{displaymath}
\end{cor}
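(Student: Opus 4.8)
\textbf{Proof plan for Corollary \ref{from_series_to_lil}.}
The plan is to pass from the summability hypothesis along the dyadic subsequence $n \mapsto 2^n$ to a statement about \emph{all} $m$ by a standard blocking argument, using the maximal inequality of Lemma \ref{Montgomery_iterated} to control the fluctuations of the $U$-statistic inside each dyadic block. First I would apply Lemma \ref{Montgomery_iterated} to the kernel $h$ (with $n$ there equal to $2^{n+1}$), which gives
\begin{displaymath}
\p\big(\max_{|\jj|\le 2^{n+1}}\big|\sum_{\ii\colon i_k \le j_k}h(\xdi)\big| \ge t\big) \le L^d \p\big(\big|\sum_{|\ii|\le 2^{n+1}}h(\xdi)\big| \ge t/L^d\big).
\end{displaymath}
Taking $t = L^d C 2^{(n+1)\alpha}\log^\alpha(n+1)$ and summing over $n$, the hypothesis (applied at level $2^{n+1}$, noting $\log^\alpha(n+1) \le L_\alpha \log^\alpha n$ for large $n$) shows that $\sum_n \p(A_n) < \infty$, where $A_n$ is the event that the running maximum over the block exceeds $L^d C 2^{(n+1)\alpha}\log^\alpha(n+1)$. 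By Borel--Cantelli, almost surely only finitely many $A_n$ occur.

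Next I would use this to bound $|\sum_{|\ii|\le m}h(\xdi)|$ for general $m$: given $m$, pick $n$ with $2^n \le m < 2^{n+1}$, so that $\{\ii\colon |\ii|\le m\}$ is of the form $\{\ii\colon i_k \le j_k\}$ with $|\jj| = m \le 2^{n+1}$, hence $|\sum_{|\ii|\le m}h(\xdi)|$ is dominated by the running maximum over the $n$-th block. On the event that $A_n$ fails we therefore get
\begin{displaymath}
\big|\sum_{|\ii|\le m}h(\xdi)\big| \le L^d C 2^{(n+1)\alpha}\log^\alpha(n+1) \le L_{d,\alpha} C\, m^\alpha (\loglog m)^\alpha,
\end{displaymath}
where the last step uses $2^{n+1} \le 2m$ and $\log(n+1) \le L\,\loglog m$ (since $n \asymp \log m$, hence $\log n \asymp \loglog m$). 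Since a.s. only finitely many $A_n$ occur, this bound holds for all sufficiently large $m$, and dividing by $(m\loglog m)^\alpha$ and letting $m\to\infty$ gives
\begin{displaymath}
\limsup_{m\to\infty}\frac{1}{(m\loglog m)^\alpha}\big|\sum_{|\ii|\le m}h(\xdi)\big| \le L_{d,\alpha}C \quad\textrm{a.s.}
\end{displaymath}
Finally I would note that the corollary is stated along the subsequence $m = 2^n$, which is an immediate consequence of the bound just proved for all $m$ (one could also run the whole argument only along the subsequence, but there is no simplification in doing so).

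The routine points are the elementary estimates $\log^\alpha(n+1)\le L_\alpha\log^\alpha n$ and $\log(n+1)\le L\,\loglog m$ when $2^n \le m$, plus the observation that the index set $\{|\ii|\le m\}$ is a ``lower rectangle'' so that Lemma \ref{Montgomery_iterated} applies verbatim. The only genuine step is the combination of the maximal inequality with Borel--Cantelli to fill in the gaps between dyadic times; this is where the hypothesis is used, and it is precisely the reason one needs a maximal inequality rather than just a tail bound at the dyadic times themselves. I expect no serious obstacle beyond keeping the powers of $\log$ and the constants $L_{d,\alpha}$ straight.
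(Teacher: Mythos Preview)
Your proposal is correct and follows essentially the same approach as the paper: the paper's own proof simply says that, given Lemma \ref{Montgomery_iterated}, the argument is identical to the real-valued case in \cite{AmcLat}, and your blocking-plus-Borel--Cantelli argument using that maximal inequality is exactly that standard proof spelled out. Your observation that the argument actually yields the bound for all $m$ (not just along $2^n$) is correct and is in fact what is needed when the corollary is later applied to prove the sufficiency part of Theorem \ref{decoupled_lil_theorem}.
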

\begin{proof}
Given Lemma \ref{Montgomery_iterated}, the proof is the same as
the one for real kernels, presented in \cite{AmcLat} (Corollary 1 therein).
\end{proof}

The next lemma shows that the contribution to a decoupled
U-statistic from the 'diagonal', i.e. from the sum over
multiindices $\ii\notin \nodiag{n}$ is negligible. The proof given
in \cite{AmcLat} (Lemma 10) is still valid, since the only part which cannot
be directly transferred to the Banach space setting is the
estimate of variance of canonical U-statistics, which is the same
in the real and general Hilbert space case.

\begin{lemma}\label{diagonal} If  $h\colon \Sigma^d \to H$ is canonical and satisfies
\begin{displaymath}
\E (|h|^2\wedge u) = \mathcal{O}((\loglog u)^{\beta}),
\end{displaymath}
for some $\beta$, then
\begin{equation}\label{diagonal_series}
\limsup_{n \to \infty}\frac{1}{(n\loglog
n)^{d/2}}\big|\sum_{\stackrel{|\ii| \le n}{\exists_{j\neq k} i_j =
i_k}} h(\xdi)\big| = 0\; {\rm a.s.}
\end{equation}
\end{lemma}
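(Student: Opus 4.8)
The plan is to decompose the diagonal sum according to the pattern of coincidences among the indices and to reduce each piece to a canonical $U$-statistic of strictly smaller order, for which the growth hypothesis is preserved and which can be handled by the exponential inequality of Corollary \ref{tail_Hoeffding} together with the Borel--Cantelli lemma. First I would write $\{\ii\colon |\ii|\le n,\ \exists_{j\neq k}\ i_j=i_k\}$ as a finite union, over the nontrivial partitions $\mathcal{Q}$ of $I_d$ (those with $\degree(\mathcal{Q})<d$), of the sets of multiindices whose coincidence pattern is exactly $\mathcal{Q}$; up to an $L_d$ factor it suffices to bound, for each such $\mathcal{Q}=\{Q_1,\dots,Q_m\}$ with $m<d$, the contribution $\sum^\ast h(\xdi)$ where the sum runs over $\ii$ with $i_j=i_k$ iff $j,k$ lie in the same block. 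Relaxing ``iff'' to ``if'' (inclusion--exclusion again costs only $L_d$), this contribution becomes, after renaming the $m$ free indices, a sum of the form $\sum_{|\mathbf{l}|\le n}\bar h(X^{(1)}_{l_1},\dots,X^{(m)}_{l_m})$ for a kernel $\bar h$ of order $m$ obtained from $h$ by identifying the decoupled coordinates in each block of $\mathcal{Q}$ (for a block $Q_r$ one substitutes the same variable into all slots indexed by $Q_r$).

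Next I would replace $\bar h$ by its Hoeffding decomposition: $\bar h=\sum_{S\subseteq I_m}(\pi_S\bar h)$ where $\pi_S\bar h$ is canonical in the coordinates in $S$ and constant in the others, so that $\sum_{|\mathbf{l}|\le n}\pi_S\bar h$ is, up to binomial-in-$n$ factors absorbed into $n^{\#S/2}$-type bounds, essentially $n^{m-\#S}$ times a canonical $U$-statistic of order $\#S\le m<d$ in $n$ variables. The two inputs I need are: (i) the growth condition is inherited, i.e.\ $\E(|\pi_S\bar h|^2\wedge u)=\mathcal{O}((\loglog u)^\beta)$ — this follows because $\pi_S\bar h$ is a bounded linear combination (with coefficients depending only on $d$) of conditional expectations of $\bar h$, hence of $h$, and Jensen together with the hypothesis on $h$ controls the truncated second moment (this is the step where one uses that $\E_I$ only decreases $L^2$-type quantities, cf.\ the estimate $\|h\|_{K,\mathcal{J}}\le\sqrt{\E_I|h|^2}$); and (ii) for a canonical kernel $g$ of order $r<d$ with $\E(|g|^2\wedge u)=\mathcal{O}((\loglog u)^\beta)$ one has $\limsup_n (n\loglog n)^{-d/2} n^{m-\#S}|\sum_{|\mathbf{l}|\le n}g(\cdot)|=0$ a.s.; since $r+2(m-\#S)=\#S+2(m-\#S)\le 2m-\#S\le 2m-1<2d$ when $m<d$, the exponent $d/2$ strictly dominates the natural scale $(m-\#S)+r/2$, so there is room to spare.

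To prove (ii) I would truncate $g=g\ind{|g|\le u_n}+g\ind{|g|>u_n}$ at a slowly growing level $u_n$ (a power of $\log n$), use summability of $\sum_n\p(|g(X^{(1)}_{l_1},\dots)|>u_n\ \text{for some }|\mathbf{l}|\le 2^n)$ from the moment bound to discard the large part along the dyadic subsequence, center the bounded part (the centering correction is again a canonical $U$-statistic of smaller order, handled by induction on the order), and apply Corollary \ref{tail_Hoeffding} to the bounded centered canonical kernel: with $\E|g_{\rm trunc}|^2\le L(\loglog 2^n)^\beta$ and $\|g_{\rm trunc}\|_\infty\le u_n$ the tail probability at level $t\asymp (n\loglog n)^{d/2}2^{-n(m-\#S)}$ is summable in $n$ because the relevant exponent is $>(m-\#S)+r/2$; Borel--Cantelli along dyadic blocks plus Lemma \ref{Montgomery_iterated} (to pass from the dyadic subsequence to all $n$, exactly as in Corollary \ref{from_series_to_lil}) then yields the a.s.\ limit $0$. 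The main obstacle is bookkeeping: organizing the combinatorics of coincidence patterns and Hoeffding projections so that \emph{every} resulting term has order $<d$ and the numerology $r+2(m-\#S)<2d$ is transparent; the probabilistic core, once a term is isolated, is a routine truncation-plus-exponential-inequality argument already carried out for $H=\R$ in \cite{AmcLat}, and as noted there the only genuinely analytic ingredient (the variance/second-moment estimate for canonical $U$-statistics) is identical in the Hilbert-space setting.
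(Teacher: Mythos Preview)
Your plan misreads the decoupled structure. When $i_j=i_k$ in $h(\xdi)=h(X^{(1)}_{i_1},\dots,X^{(d)}_{i_d})$, the arguments $X^{(j)}_{i_j}$ and $X^{(k)}_{i_k}$ remain \emph{independent} (different superscripts), so your description of $\bar h$ as ``$h$ with the same variable substituted into all slots of a block'' is not what actually occurs. Parametrizing a coincidence pattern $\mathcal{Q}=\{Q_1,\dots,Q_m\}$ by free indices $l_1,\dots,l_m$, the relevant kernel is $\bar h(Y_{l_1},\dots,Y_{l_m})$ with $Y_{l_r}=(X^{(j)}_{l_r})_{j\in Q_r}$ a tuple of independent copies; since $h$ is canonical in each of its $d$ slots, integrating out any single $X^{(j)}_{l_r}$ already annihilates $\bar h$, so $\bar h$ is automatically canonical in the $Y$'s and your Hoeffding-decomposition step is superfluous. (Incidentally, the Jensen argument you sketch for passing the growth condition to $\pi_S\bar h$ goes the wrong way: concavity of $t\mapsto t\wedge u$ gives $(\E_{I^c}|\bar h|^2)\wedge u\ge \E_{I^c}[|\bar h|^2\wedge u]$, the opposite of what you would need.)

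Once this is seen, the argument collapses to the one the paper points to. For decoupled canonical $h$, any two distinct multiindices $\ii\neq\jj$ satisfy $\E\langle h(\xdi),h(\xd_\jj)\rangle=0$ (choose $k$ with $i_k\neq j_k$ and integrate in $X^{(k)}_{i_k}$ first), so the second moment of the \emph{entire} diagonal sum is at most $L_d\, n^{d-1}\E|h|^2$, with no splitting by coincidence pattern required. Along the dyadic subsequence truncate $h$ at level $2^{nd/2}$ and apply this to $\pi_d$ of the truncation, so that $\E|\pi_dh_n|^2\le 4^d\E(|h|^2\wedge 2^{nd})\le L_d(\log n)^\beta$; Chebyshev then gives a probability of order $2^{-n}(\log n)^{\beta-d}$, while the tail $h-h_n$ is handled by a first-moment bound via Lemma~\ref{calkowalnosc}. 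Borel--Cantelli plus Lemma~\ref{Montgomery_iterated} finish the job. This is precisely the ``variance of canonical $U$-statistics'' argument from \cite{AmcLat} that the paper invokes; Corollary~\ref{tail_Hoeffding} is not needed here.
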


\begin{cor}\label{randomized_decoupled_series}
The randomized decoupled LIL
\begin{equation}\label{randomized_decoupled_lil}
\limsup_{n\to \infty} \frac{1}{(n\loglog n)^{d/2}}\big|\sum_{|\ii|
\le n} \edi h(\xdi)\big| < C
\end{equation}
is equivalent to (\ref{first_series}), meaning then if
(\ref{randomized_decoupled_lil}) holds then so does
(\ref{first_series}) with $D = L_d C$ and (\ref{first_series})
implies (\ref{randomized_decoupled_lil}) with $C = L_d D$.
\end{cor}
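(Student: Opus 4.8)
The plan is to prove the two implications separately, exploiting the fact that the randomized decoupled $U$-statistic $\sum_{|\ii|\le n}\edi h(\xdi)$ is canonical (the Rademacher variables make it so), so the exponential inequalities and the Montgomery--Smythe-type maximal inequality of the previous section apply directly. For the direction ``(\ref{randomized_decoupled_lil}) $\Rightarrow$ (\ref{first_series})'': assume the randomized decoupled LIL holds with constant $C$. The first step is a routine blocking/discretization: along the geometric subsequence $n_k = 2^k$ one has $(n_k \loglog n_k)^{d/2}$ comparable to $2^{kd/2}\log^{d/2}k$ up to a constant depending only on $d$, so for a suitable $D = L_d C$ the event $\{|\sum_{|\ii|\le 2^k}\edi h(\xdi)| \ge D\,2^{kd/2}\log^{d/2}k\}$ occurs only finitely often a.s., by definition of $\limsup$. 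Converting ``finitely often a.s.'' into summability of probabilities is where one invokes a Borel--Cantelli-type converse; here the standard tool is a Lévy-type maximal inequality together with a Paley--Zygmund or a blocking argument exploiting independence of increments across dyadic blocks. In the real case this is carried out in \cite{AmcLat}, and since the only space-specific ingredient is again the variance/second-moment bound for canonical $U$-statistics (which is identical in $H$), the same argument goes through verbatim; I would just cite that proof and remark that $l^\infty$ is replaced by $l^\infty(H)$.

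For the converse direction ``(\ref{first_series}) $\Rightarrow$ (\ref{randomized_decoupled_lil})'': this is an immediate application of Corollary \ref{from_series_to_lil}. Indeed, (\ref{first_series}) is precisely the hypothesis of Corollary \ref{from_series_to_lil} applied to the kernel $\tilde h((\varepsilon_1,X_1),\dots,(\varepsilon_d,X_d)) = \varepsilon_1\cdots\varepsilon_d h(X_1,\dots,X_d)$ on the product space $\Sigma' = (\{-1,1\}\times\Sigma)$, with $\alpha = d/2$ and with $C$ replaced by $D$. The conclusion of Corollary \ref{from_series_to_lil} is exactly
\begin{displaymath}
\limsup_{n\to\infty}\frac{1}{(n\loglog n)^{d/2}}\Big|\sum_{|\ii|\le 2^n}\edi h(\xdi)\Big| \le L_d D \quad \textrm{a.s.},
\end{displaymath}
and a standard monotonicity-in-$n$ argument (again via Lemma \ref{Montgomery_iterated}, or the sandwiching of $n$ between consecutive powers of $2$) upgrades this from the dyadic subsequence $2^n$ to the full sequence, giving (\ref{randomized_decoupled_lil}) with $C = L_d D$.

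The main obstacle, such as it is, is the first implication: turning the almost-sure boundedness of the $\limsup$ into convergence of the series (\ref{first_series}). Pointwise boundedness gives summability only after one controls the \emph{fluctuations} of the partial sums between dyadic times and shows that a single dyadic term dominating the normalizing sequence infinitely often forces the series to diverge with positive probability --- this requires the independence-of-blocks structure and a lower bound (reverse exponential / Paley--Zygmund) on the tail of a canonical decoupled $U$-statistic, not just the upper tail from Corollary \ref{tail_Hoeffding}. All of this is present in the real-valued treatment of \cite{AmcLat} and, as noted, transfers because the relevant second-moment identity for canonical $U$-statistics is insensitive to whether the kernel is real- or Hilbert-space-valued; so the proof reduces to citing \cite{AmcLat} with the indicated cosmetic changes.
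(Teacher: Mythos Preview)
Your proposal is correct and matches the paper's approach: both directions reduce to citing the real-valued argument in \cite{AmcLat} (their Corollary~2), with the observation that the only space-dependent ingredient is the second-moment identity $\E\big|\sum_{|\ii|\le n}\edi h(\xdi)\big|^2 = \sum_{|\ii|\le n}\E|h(\xdi)|^2$, which holds verbatim in a Hilbert space. Your informal description of the ``LIL $\Rightarrow$ series'' mechanism is a bit loose (the dyadic increments of a decoupled $U$-statistic are not literally independent, so a naive Borel--Cantelli converse does not apply; the actual argument in \cite{GZh,AmcLat} goes through decoupling and fresh independent copies), but since you correctly defer to \cite{AmcLat} for the details this does not affect the validity of the plan.
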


The proof is the same as for the real-valued case, given in \cite{AmcLat} (Corollary 2), one only needs to replace $h^2$ by $|h|^2$
and use the formula for the second moments in Hilbert spaces.

\begin{cor}\label{lil_dec_diagLIL}
For a symmetric, canonical kernel $h\colon \Sigma^d \to H$, the
LIL (\ref{lil}) is equivalent to the decoupled LIL 'with diagonal'
\begin{equation}\label{decoupled_diagonal_lil}
\limsup_{n \to \infty} \frac{1}{(n\loglog
n)^{d/2}}\big|\sum_{|\ii| \le n} h(\xdi)\big|< D
\end{equation}
again meaning that there are constants $L_d$ such that if
(\ref{lil}) holds for some $D$ then so does
(\ref{decoupled_diagonal_lil}) for $D = L_d C$, and conversely,
(\ref{decoupled_diagonal_lil}) implies (\ref{lil}) for $C = L_dD$.
\end{cor}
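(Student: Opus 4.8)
The plan is to chain together the equivalences already assembled in this section, so that the undecoupled LIL \eqref{lil} and the decoupled ``with diagonal'' LIL \eqref{decoupled_diagonal_lil} meet in the middle. The natural pivot is the randomized decoupled LIL \eqref{randomized_decoupled_lil} together with the first-series condition \eqref{first_series}, since those are the statements that both Lemma \ref{GineZhang}/Corollary \ref{randomized_decoupled_series} and Lemma \ref{moments_comparison}/Corollary \ref{from_series_to_lil} are built to talk to. So I would organize the proof as two implications, tracking the constants explicitly at each arrow.

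For the forward direction, suppose \eqref{lil} holds with constant $C$. By Lemma \ref{GineZhang} (here we use that $h$ is symmetric) we obtain \eqref{first_series} with $D = L_d C$. Now I want to pass from the randomized decoupled series back to the \emph{non}-randomized decoupled statement. First observe that since $h$ is canonical, $\pi_d h = h$, so Lemma \ref{moments_comparison} applied with $h_\ii \equiv h$ gives $\|\sum_{|\ii|\le n} h(\xdi)\|_p \le 2^d\|\sum_{|\ii|\le n}\edi h(\xdi)\|_p$ for all $p$. Applying this along $n = 2^m$ and using Chebyshev (or rather a direct tail comparison via these $L^p$ bounds, together with the fact that for nonnegative random variables control of all moments transfers a subgaussian/subexponential-type tail), \eqref{first_series} yields $\sum_m \p(|\sum_{|\ii|\le 2^m} h(\xdi)| \ge D' 2^{md/2}\log^{d/2} m) < \infty$ with $D' = L_d D$. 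Then Corollary \ref{from_series_to_lil} with $\alpha = d/2$ turns this series condition into $\limsup_n (n\loglog n)^{-d/2}|\sum_{|\ii|\le 2^n} h(\xdi)| \le L_d D'$ a.s.; a standard blocking/monotonicity argument (again using Lemma \ref{Montgomery_iterated} to control the maximal function between consecutive dyadic levels) upgrades the limit along $2^n$ to the full limit in $n$, giving \eqref{decoupled_diagonal_lil} with $D = L_d C$.

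For the converse, suppose \eqref{decoupled_diagonal_lil} holds with constant $D$. Splitting the sum over $|\ii|\le n$ into the off-diagonal part $\ii\in\nodiag{n}$ and the diagonal part, Lemma \ref{diagonal} shows the diagonal contribution vanishes after normalization by $(n\loglog n)^{d/2}$ --- provided we know $\E(|h|^2\wedge u) = \mathcal{O}((\loglog u)^{d/2})$; one gets this a priori from \eqref{decoupled_diagonal_lil} itself by a one-dimensional comparison (testing against a fixed coordinate, exactly as in \cite{AmcLat}), or it is already among the standing hypotheses implied by the LIL, so \eqref{decoupled_diagonal_lil} implies the decoupled LIL \eqref{decoupled_lil} (over $\nodiag{n}$) with the same constant up to $L_d$. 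Then Lemma \ref{lil_decoupledLIL} converts \eqref{decoupled_lil} into the undecoupled LIL \eqref{lil} with $C = L_d D$, which is what we want.

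The main obstacle, and the one point where a little care is genuinely needed rather than bookkeeping, is the passage from the \emph{series} condition \eqref{first_series} (about $\edi h$) to a series condition about $h$ itself that is usable in Corollary \ref{from_series_to_lil}: Lemma \ref{moments_comparison} only compares $L^p$ norms, not tail probabilities directly, so one must either (i) translate \eqref{first_series} into moment growth of $\sum\edi h(\xdi)$ — which is legitimate because the tail in \eqref{first_series} is of the canonical $U$-statistic type controlled by Corollary \ref{tail_Hoeffding}, hence integrates against $p$-th powers — then apply Lemma \ref{moments_comparison}, then Chebyshev back to tails; or (ii) invoke the already-packaged equivalence in Corollary \ref{randomized_decoupled_series} to first get \eqref{randomized_decoupled_lil}, and then feed $h = \pi_d h$ into whatever de-randomization step \cite{AmcLat} uses. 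Either way the subtlety is purely that the comparison tool is $L^p$-flavored while the hypothesis and conclusion are tail-flavored, and one needs the two-sided translation between them — which is routine given Corollary \ref{tail_Hoeffding}, but is the step most likely to hide a constant-tracking error. Everything else is a concatenation of the lemmas in this section with constants multiplying out to $L_d$.
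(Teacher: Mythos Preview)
Your converse direction is exactly right and matches the paper: split off the diagonal via Lemma~\ref{diagonal}, land on \eqref{decoupled_lil}, then invoke Lemma~\ref{lil_decoupledLIL}. The integrability needed for Lemma~\ref{diagonal} is indeed the only point requiring care, and the paper handles it just as you indicate (this is precisely the content of its one-line remark that ``the integrability of the kernel guaranteed by the LIL is worse in the Hilbert space case, it still allows one to use Lemma~\ref{diagonal}'').

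Your forward direction, however, is an unnecessary detour that creates a genuine gap. You route \eqref{lil} $\to$ \eqref{first_series} $\to$ (de-randomized series) $\to$ \eqref{decoupled_diagonal_lil}, and correctly flag that the middle step is awkward because Lemma~\ref{moments_comparison} compares $L^p$ norms rather than tails. Your proposed fixes (i) and (ii) are both hand-wavy: translating the \emph{convergence} of the series \eqref{first_series} into moment growth and back is not automatic, and ``whatever de-randomization step \cite{AmcLat} uses'' is not a proof. But none of this is needed. Lemma~\ref{lil_decoupledLIL} is a \emph{two-sided} equivalence: it already gives \eqref{lil} $\Rightarrow$ \eqref{decoupled_lil} directly, with constant $L_d C$. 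From \eqref{decoupled_lil} you then reach \eqref{decoupled_diagonal_lil} by \emph{adding} the diagonal, which is negligible by Lemma~\ref{diagonal} once you have the integrability condition (and that you can extract from \eqref{lil} via Lemma~\ref{GineZhang} and the Paley--Zygmund/second-moment argument, yielding $\E(|h|^2\wedge u)=\mathcal{O}((\loglog u)^d)$). This is the paper's route: use Lemma~\ref{lil_decoupledLIL} symmetrically for both directions, and use Lemma~\ref{diagonal} to pass between \eqref{decoupled_lil} and \eqref{decoupled_diagonal_lil} in both directions. The $L^p$-vs-tail issue you worry about simply never arises.
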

\begin{proof}
The proof is the same as in the real case (see \cite{AmcLat}, Corollary 3).
Although the integrability of the kernel guaranteed by the LIL is
worse in the Hilbert space case, it still allows one to use Lemma
\ref{diagonal}.
\end{proof}

\section{The canonical decoupled case}

Before we formulate the necessary and sufficient conditions for the bounded LIL in Hilbert spaces, we need
\begin{defi} For a canonical kernel $h\colon \Sigma^d \to H$, $K\subseteq I_d$, $\mathcal{J}=\{J_1,\ldots,J_k\}\in\mathcal{P}_{I_d\backslash K}$ and $u > 0$ we define
\begin{align*}
\|h\|_{K,\mathcal{J},u} = \sup\{&\E\langle
h(X),g(X_K)\rangle\prod_{i=1}^k f_i(X_{J_i})\colon g\colon
\Sigma^K \to H, \\
&f_i\colon\Sigma^{J_i}\to \R, \|g\|_2,\|f_i\|_2\le
1,\|g\|_\infty,\|f_i\|_\infty\le u\},
\end{align*}
where for $K= \emptyset$ by $g(X_K)$ we mean an element $g \in H$, and $\|g\|_2$ denotes just the norm of $g$ in $H$ (alternatively
we may think of $g$ as of a random variable measurable with respect to $\sigma((X_i)_{i\in\emptyset})$, hence constant). Thus the condition on $g$ becomes in this
case just $|g|\le 1$.
\end{defi}

\paragraph{Example} For $d=2$, the above definition reads as
\begin{align*}
\|h(X_1,X_2)\|_{\emptyset,\{\{1,2\}\},u} & = \sup\{|\E h(X_1,X_2)f(X_1,X_2)|\colon\\
&\quad\quad\quad\; \E f(X_1,X_2)^2\le 1, \|f\|_\infty\le u\},\\
\|h(X_1,X_2)\|_{\emptyset, \{\{1\}\{2\}\},u} & = \sup\{|\E h(X_1,X_2)f(X_1)g(X_2)|\colon\\
&\quad\quad\quad\;\E f(X_1)^2, \E g(X_2)^2\le 1\\
&\quad\quad\quad\; \|f\|_\infty, \|g\|_\infty \le u\},\\
\|h(X_1,X_2)\|_{\{1\},\{\{2\}\},u} & = \sup\{\E \langle f(X_1),h(X_1,X_2)\rangle g(X_2)\colon \\
&\quad\quad\quad\;\E |f(X_1)|^2,\E g(X_2)^2\le 1\\
&\quad\quad\quad\; \|f\|_\infty, \|g\|_\infty,  \le
u\}\\
\|h(X_1,X_2)\|_{\{1,2\},\emptyset,u} & = \sup\{\E \langle f(X_1,X_2),h(X_1,X_2)\rangle\colon\\
&\quad\quad\quad\; \E |f(X_1,X_2)|^2\le 1, \|f\|_\infty\le u\}.
\end{align*}
\begin{theorem}\label{decoupled_lil_theorem}
Let $h$ be a canonical $H$-valued symmetric kernel in $d$ variables. Then the
decoupled LIL
\begin{equation}\label{LIL_theorem}
\limsup_{n\to \infty} \frac{1}{n^{d/2}{(\loglog n)^{d/2}}}\big|
\sum_{|i| \le n}  h(\xdi)| < C
\end{equation}
holds if and only if
\begin{align}\label{nasc_integrability}
\E\frac{|h|^2}{(\llog|h|)^d} < \infty
\end{align}
and for all $K\subseteq I_d,\mathcal{J} \in
\mathcal{P}_{I_d\backslash K}$
\begin{equation}\label{nasc}
\limsup_{u\to \infty} \frac{1}{(\loglog
u)^{(d-\deg\mathcal{J})/2}}\|h\|_{K,\mathcal{J},u} < D.
\end{equation}
More precisely, if (\ref{LIL_theorem}) holds for some $C$ then (\ref{nasc})
is satisfied for $D =  L_dC$ and conversely, (\ref{nasc_integrability}) and (\ref{nasc}) implies
(\ref{LIL_theorem}) with $C = L_dD$.
\end{theorem}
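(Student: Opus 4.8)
The plan is to prove the two implications separately, in both cases reducing the decoupled LIL to the two series conditions via the results of Section~\ref{LIL_statements_equiv} and then feeding in the moment/tail estimates of Section~\ref{nierownosci}. For the \emph{necessity} direction, assume (\ref{LIL_theorem}). First, Lemma~\ref{lil_decoupledLIL} (or rather its randomized companion, Corollary~\ref{randomized_decoupled_series}) shows that (\ref{LIL_theorem}) implies the summability of the series (\ref{first_series}), i.e. $\sum_n \p(|\sum_{|\ii|\le 2^n}\edi h(\xdi)|\ge D2^{nd/2}\log^{d/2}n)<\infty$. From convergence of this series one extracts the integrability condition (\ref{nasc_integrability}): test against a single coordinate direction to recover the i.i.d.\ LIL integrability of \cite{GKZ}, exactly as in \cite{AmcLat}. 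To obtain each growth condition (\ref{nasc}), one argues by contradiction --- if some $\|h\|_{K,\mathcal{J},u}$ grows faster than $(\loglog u)^{(d-\deg\mathcal J)/2}$, pick near-optimal truncated test functions $g,f_i$ bounded by $u$ and $L^2$-normalized, and use them to construct a lower bound for the probability in (\ref{first_series}) (via a one-dimensional de la Pe\~na--Montgomery-Smith type lower estimate and a Paley--Zygmund argument on the corresponding linear combination), contradicting summability. This is the same scheme as in the real-valued proof of \cite{AmcLat}, with $h^2$ replaced by $|h|^2$ and scalar test functions against one distinguished block $K$ allowed to be $H$-valued.

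For the \emph{sufficiency} direction, assume (\ref{nasc_integrability}) and (\ref{nasc}). By Corollaries~\ref{lil_dec_diagLIL}, \ref{randomized_decoupled_series} and \ref{from_series_to_lil} it suffices to prove that the series in (\ref{first_series}) converges for $D=L_dC$; equivalently, by Lemma~\ref{moments_comparison}, it suffices to control $\p(|\sum_{|\ii|\le 2^n}\pi_d h(\xdi)|\ge D2^{nd/2}\log^{d/2}n)$. The natural move is a truncation at level $u_n = 2^{n\epsilon}$ (a small power), splitting $h = h\ind{|h|\le u_n} + h\ind{|h|>u_n}$. For the unbounded part, the integrability condition (\ref{nasc_integrability}) together with Lemma~\ref{nowy_lemat_nie_mam_sily_wymyslac_nazwy} controls $\E(|h|^2\wedge u)$ and a crude moment/Borel--Cantelli estimate (as in Lemma~\ref{diagonal}) shows this part contributes nothing to the $\limsup$. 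For the bounded part one applies Corollary~\ref{tail_Hoeffding} with $t \asymp 2^{nd/2}\log^{d/2}n$, $n \leftarrow 2^n$. The key point is that each exponent in that tail bound, namely $(t / ((2^n)^{d/2}\|h\ind{|h|\le u_n}\|_{K,\mathcal J})) ^{2/\deg\mathcal J}$, is, thanks to the elementary bound $\|h\ind{|h|\le u_n}\|_{K,\mathcal J} \le \|h\|_{K,\mathcal J,u_n}$ and hypothesis (\ref{nasc}), at least of order $\big(\log^{d/2}n / (\loglog u_n)^{(d-\deg\mathcal J)/2}\big)^{2/\deg\mathcal J} = \log n \cdot (\text{const})$ up to lower-order factors, so each term in the exponential is $\le \exp(-c\log n) = n^{-c}$ with $c$ as large as we like by taking $D$ large; similarly the terms indexed by $I\subsetneq I_d$ involving $\|(\E_I|h\ind{|h|\le u_n}|^2)^{1/2}\|_\infty\le u_n$ are handled because $n^{\#I/2}u_n \ll 2^{nd/2}\log^{d/2}n$ once $\epsilon$ is small. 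Summing a convergent geometric-type series over $n$ gives (\ref{first_series}).

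The main obstacle, as in \cite{AmcLat}, is the necessity direction --- specifically, turning a failure of the scalar growth condition (\ref{nasc}) for a particular pair $(K,\mathcal J)$ into a genuine lower bound for the tail probability of the randomized $U$-statistic. One must choose the truncated extremizers $g, f_1,\dots,f_k$, plug them into the chaos, and then show that the resulting (lower-order, $\deg\mathcal J$-linear in the block variables, linear with $H$-values in the $K$-block) randomized form is with non-negligible probability as large as $2^{nd/2}\log^{d/2}n$; this requires a hypercontractive lower bound / Paley--Zygmund estimate adapted to the fact that one block carries Hilbert-space values, plus care that the auxiliary diagonal and cross terms do not cancel the main contribution. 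All of this is routine given the real-valued template of \cite{AmcLat} and the Hilbert-space moment machinery of Section~\ref{nierownosci}, but it is where the real work lies; I would organize it as a sequence of lemmas, one per type of index pair $(K,\mathcal J)$, mirroring the structure of Corollary~\ref{simplified_moments}.
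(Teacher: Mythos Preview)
Your sufficiency argument has a genuine gap. The ``elementary bound'' $\|h\ind{|h|\le u_n}\|_{K,\mathcal J}\le \|h\|_{K,\mathcal J,u_n}$ is not valid: the left-hand side truncates the \emph{kernel} but allows arbitrary $L^2$-normalized test functions, whereas the right-hand side keeps the kernel intact but truncates the \emph{test functions} in $L^\infty$. These are not comparable in the direction you need. In fact, the whole point of Step~4 of the paper's proof is to pass from $\|h_n\|_{K,\mathcal J}$ to $\|h\|_{K,\mathcal J,2^{nd/2}}$, and this requires first truncating the test functions (see inequality~(\ref{truncation_in_J_norms})) and then controlling the resulting error terms via bounds on $\|(\E_I|h_n|^2)^{1/2}\|_\infty$; those bounds in turn are only available after the elaborate three-stage truncation of $h$ (Steps~1--3) based on the conditional quantities $\E_I H^2$ rather than on $|h|$ alone.

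There is also a quantitative obstruction to your single truncation level $u_n=2^{n\epsilon}$. For the unbounded part an $L^1$ bound gives a summand of order $2^{nd/2}\E|h|\ind{|h|>u_n}/\log^{d/2}n$; using Lemma~\ref{calkowalnosc} one only gets $\E|h|\ind{|h|>u_n}=O((\log n)^d/u_n)$, so the series behaves like $\sum_n 2^{n(d/2-\epsilon)}(\log n)^{d/2}$, which diverges for small $\epsilon$. Hence the unbounded part forces $u_n\asymp 2^{nd/2}$ (this is exactly the paper's choice in Step~1), but at that scale your crude estimate $\|(\E_I|h_n|^2)^{1/2}\|_\infty\le u_n$ gives an exponent tending to~$0$ and the tail bound is useless. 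The paper resolves this tension with the multi-step truncation and Lemmas~\ref{szereg}, \ref{upper_bounds_only}, \ref{l4sequence3}; a single-level truncation cannot.

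On the necessity side, ``test against a single coordinate direction'' will not recover (\ref{nasc_integrability}): projecting onto $\varphi\in H$ only yields integrability of $\langle\varphi,h\rangle^2/(\llog|\langle\varphi,h\rangle|)^d$, not of $|h|^2/(\llog|h|)^d$. The paper instead uses hypercontractivity/Paley--Zygmund to pass from the series (\ref{first_series}) to $\sum_n\p(\sum_{|\ii|\le 2^n}|h(\xdi)|^2\ge c\,2^{nd}\log^d n)<\infty$, and then invokes the rather involved Theorem~\ref{series_integral} (via Corollary~\ref{convergent_series}) to extract the integrability. Your outline for the growth conditions (\ref{nasc}) is closer to what the paper does, though the actual argument is direct rather than by contradiction and hinges on the Hilbert-space chaos lower bound (Lemma~\ref{chaos}).
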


\paragraph{Remark} Using Lemma \ref{nowy_lemat_nie_mam_sily_wymyslac_nazwy} one can easily check that the
condition (\ref{nasc}) with $D < \infty$ for $I=I_d$ is implied by (\ref{nasc_integrability}).

%-------------------------------------------------------------------------------------------------------------------------
\section{Necessity}

The proof is a refinement of ideas from \cite{L2}, used to study random matrix approximations
of the operator norm of kernel integral operators.

\begin{lemma}
\label{PalZygmult}

If $a,t>0$ and $h$ is a nonnegative $d$-dimensional kernel  such that $N^{d}\E
h(X)\geq ta$ and $\|\E_{I}h(X)\|_{\infty}\leq N^{-\# I}a$ for all $\emptyset\subseteq I\subsetneq \{1,\ldots,d\}$,
then
\[
 \forall_{\lambda\in (0,1)}\
 \p(\sum_{|\ii|\leq N}h(\xdi)\geq \lambda ta)\geq
 (1-\lambda)^2\frac{t}{t+2^{d}-1}
 \geq (1-\lambda)^2 2^{-d}\min(1,t).
\]
\end{lemma}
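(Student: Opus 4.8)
The plan is to use the second-moment (Paley--Zygmund) method applied to the nonnegative random variable $S = \sum_{|\ii|\le N} h(\xdi)$. First I would compute $\E S = N^{d}\E h(X) \ge ta$, using that the $X_i^{(j)}$ are i.i.d. and there are exactly $N^d$ multiindices $\ii$ with $|\ii|\le N$. The Paley--Zygmund inequality states that for $\lambda\in(0,1)$,
\[
\p(S \ge \lambda \E S) \ge (1-\lambda)^2\frac{(\E S)^2}{\E S^2},
\]
so since $\lambda\E S \ge \lambda t a$, it suffices to show $\E S^2 \le (t + 2^d - 1)\E S \cdot a / t \cdot$ (something) — more precisely, to show $(\E S)^2/\E S^2 \ge t/(t + 2^d-1)$, i.e. $\E S^2 \le (1 + (2^d-1)/t)(\E S)^2$. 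Since $\E S \ge ta$, this will follow from the bound $\E S^2 \le (\E S)^2 + (2^d-1) a \E S$.

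The core computation is therefore expanding $\E S^2 = \sum_{|\ii|\le N}\sum_{|\jj|\le N}\E[h(\xdi)h(\xdj)]$ and classifying pairs $(\ii,\jj)$ by the set $I \subseteq I_d$ of coordinates on which they agree, i.e. $I = \{k : i_k = j_k\}$ (here agreement is between the $k$-th coordinate of $\ii$ and the $k$-th coordinate of $\jj$, which involves the same independent block $X^{(k)}$). For a pair agreeing exactly on $I$, independence gives $\E[h(\xdi)h(\xdj)] = \E_{I}[(\E_{I^c}h(\xdi))(\E_{I^c}h(\xdj))] = \E_I[(\E_{I^c}h(X))^2]$ where the two copies share the $X^{(k)}$, $k\in I$, and are independent elsewhere; note $h\ge 0$ makes all these terms nonnegative. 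When $I = I_d$ this is $\E h(X)^2$; summed over the $N^d$ diagonal pairs it contributes $N^d\E h^2 \le N^d \|h\|_\infty \E h$ — but here I should instead bound it using $\E h^2 = \E[h\cdot h] \le \|\E_{I^c}h\|_\infty \cdots$; actually the clean route is: for each proper $I \subsetneq I_d$, the number of pairs agreeing on a superset of $I$ and free elsewhere is at most $N^{\#I}\cdot N^{\#I^c}\cdot N^{\#I^c} = N^{d + \#I^c}$, and using $\|\E_{I^c}h\|_\infty \le N^{-\#I^c}a$ one gets $\E_I[(\E_{I^c}h(X))^2] \le N^{-\#I^c}a\cdot \E_I \E_{I^c} h(X) = N^{-\#I^c} a\, \E h(X)$, so the total contribution of pairs agreeing on exactly $I$ is at most $N^{d}a\,\E h(X) = a N^{d}\E h \cdot$ wait — I need $N^{\#I}\cdot(\text{free terms})$ bookkeeping, but the point is each $I$ contributes at most $a\cdot N^d \E h(X) = a\E S$. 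The case $I = I_d$ similarly gives $\E h^2 \le \|\E_\emptyset h\|_\infty\cdot\E h$? No — for $I = I_d$ we instead use $\|h\|_\infty \le a$ (the $I=\emptyset$ hypothesis with $\#I^c = d$ applied differently); rather, $I=I_d$ contributes $N^d \E h^2$, and here one uses $h\le \|h\|_\infty$; the hypothesis with $I=\emptyset$ reads $\|h\|_\infty \le N^{-d}a$... hmm, that gives $N^d\E h^2 \le N^d\|h\|_\infty\E h \le a\E h \le a\E S$. Summing over all $2^d$ subsets $I$ (the term $I$ such that $\ii = \jj$ everywhere must be separated: those $N^d$ pairs give exactly when $\ii=\jj$, contributing $N^d\E h^2$; the "$I = I_d$ but $\ii\ne\jj$" case is empty), one gets $\E S^2 \le (\E S)^2 + (2^d-1)a\,\E S$, where $(\E S)^2 = N^{2d}(\E h)^2$ is the contribution from $I = \emptyset$ (fully independent pairs) and the remaining $2^d - 1$ nonempty subsets each contribute at most $a\E S$.

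Plugging into Paley--Zygmund: $\p(S\ge \lambda ta)\ge \p(S \ge \lambda \E S) \ge (1-\lambda)^2 (\E S)^2/\E S^2 \ge (1-\lambda)^2 (\E S)/((\E S) + (2^d-1)a) \ge (1-\lambda)^2 t/(t + 2^d-1)$, using $\E S\ge ta$ and monotonicity of $x\mapsto x/(x+c)$. The final inequality $t/(t+2^d-1)\ge 2^{-d}\min(1,t)$ is elementary: if $t\ge 1$ then $t/(t+2^d-1)\ge 1/(1 + 2^d - 1) = 2^{-d}$ since $t/(t+c)$ increases in $t$ and $c = 2^d - 1$; if $t\le 1$ then $t/(t+2^d-1)\ge t/2^d$. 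The main obstacle is purely the careful bookkeeping of the partial-diagonal sums: correctly counting, for each $I$, the number of index pairs and matching it against the powers of $N$ coming from the hypotheses $\|\E_I h\|_\infty\le N^{-\#I}a$, so that every nonempty $I$ contributes exactly a clean $a\,\E S$; there is no analytic difficulty beyond this combinatorial accounting, and the positivity of $h$ is what makes the term-by-term bounding legitimate.
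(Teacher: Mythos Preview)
Your approach is exactly the paper's: expand $\E S^2$ as a double sum, classify pairs $(\ii,\jj)$ by the coincidence set $I=\{k:i_k=j_k\}$, bound each nonempty-$I$ contribution by $a\,\E S$ using the hypotheses, and feed $\E S^2 \le (\E S)^2 + (2^d-1)a\,\E S$ into Paley--Zygmund. One minor slip in your bookkeeping: the hypothesis for the empty set reads $\|h\|_\infty = \|\E_\emptyset h\|_\infty \le N^{-\#\emptyset}a = a$, not $N^{-d}a$ (you momentarily confused $\#I$ with $\#I^c$); with the correct bound the diagonal term still gives $N^d\E h^2 \le N^d\|h\|_\infty\,\E h \le N^d a\,\E h = a\,\E S$, so your final estimate is unaffected.
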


\begin{proof}
We have
\begin{align*}
\E\Big(\sum_{|\ii|\le N} h(\xdi)\Big)^2 =& \sum_{|\ii|\le N}\sum_{|\jj|\le N} \E h(\xdi)h(\xd_\jj)\\
& = \sum_{I\subseteq I_d}\sum_{|\ii|\le N}\sum_{\stackrel{|\jj|\le N\colon}{\{k\colon i_k = j_k\} = I}} \E h(\xdi)h(\xd_\jj)\\
&= \sum_{I\subseteq I_d}\sum_{|\ii|\le N}\sum_{\stackrel{|\jj|\le N\colon}{\{k\colon i_k = j_k\} = I}} \E [h(\xdi)\E_{I^c}h(\xd_\jj)]\\
&\le N^{2d}(\E h(X))^2 + \sum_{\emptyset \neq I\subseteq I_d} N^{d+\#I^c} \E h(X) \|\E_{I^c} h(X)\|_\infty\\
&\le N^{2d}(\E h(X))^2 + (2^d-1)N^d a\E h(X)\\
&\le N^{2d}(\E h(X))^2 + (2^d-1)t^{-1}N^{2d}(\E h(X))^2\\
&\le \frac{t + 2^{d}-1}{t}\Big(\E \sum_{|\ii|\le n } h(\xdi)\Big)^2.
\end{align*}
The lemma follows now from the Paley-Zygmund inequality (see e.g. \cite{dlp2}, Corollary 3.3.2.), which says that for an arbitrary nonnegative random variable $S$,
\begin{displaymath}
\p(S \ge \lambda S) \ge (1-\lambda)^2\frac{(\E S)^2}{\E S^2}.
\end{displaymath}
\end{proof}

\begin{cor}

\label{smallsec}

Let $A\subseteq \Sigma^d$ be a measurable set, such that

\[
  \forall_{\emptyset\subsetneq I\subsetneq \{1,\ldots,d\}}
  \forall_{x_{I^c}\in \Sigma^{I^c}} \;\p_{I}((x_{I^c},X_{I})\in A)\leq N^{-\# I}.
\]

Then

\[
  \p(\exists_{|\ii|\leq N}\ \xdi\in A)\geq 2^{-d}
  \min(N^{d}\p(X\in A),1).
\]

\end{cor}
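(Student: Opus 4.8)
The plan is to deduce Corollary \ref{smallsec} from Lemma \ref{PalZygmult} by choosing $h = \ind{X \in A}$ and an appropriate value of $a$. Set $h(x) = \ind{x \in A}$, so $\E h(X) = \p(X \in A) =: q$, and for $\emptyset \subsetneq I \subsetneq \{1,\ldots,d\}$ we have $\E_I h(X) = \p_I((x_{I^c}, X_I) \in A)$, whose sup-norm is at most $N^{-\#I}$ by hypothesis. Thus with the choice $a = 1$ the condition $\|\E_I h(X)\|_\infty \le N^{-\#I} a$ is satisfied for all such $I$; we still need to handle $I = \emptyset$, for which $\E_\emptyset h(X) = h(X) = \ind{X\in A}$ has sup-norm $\le 1 = N^{0} a$, so that condition holds too.

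Next I would address the first hypothesis of Lemma \ref{PalZygmult}, namely $N^d \E h(X) \ge t a$, i.e.\ $N^d q \ge t$. There are two cases. If $N^d q \ge 1$, take $t = 1$; then $\min(1,t) = 1$ and Lemma \ref{PalZygmult} with, say, $\lambda \to 0$ (or simply taking a limit $\lambda \uparrow 1$ is the wrong direction — rather let $\lambda \downarrow 0$) gives $\p(\sum_{|\ii|\le N} h(\xdi) \ge \lambda t a) \ge (1-\lambda)^2 2^{-d}$; letting $\lambda \to 0$ and using that $\{\sum_{|\ii|\le N} h(\xdi) > 0\} = \{\exists_{|\ii|\le N}\, \xdi \in A\}$ (since $h$ is $\{0,1\}$-valued, a positive sum means some term equals $1$) yields $\p(\exists_{|\ii|\le N}\, \xdi \in A) \ge 2^{-d} = 2^{-d}\min(N^d q, 1)$. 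If instead $N^d q < 1$, take $t = N^d q \le 1$, so that $N^d \E h(X) = t = ta$ holds with equality; then $\min(1,t) = t = N^d q$ and the same argument with $\lambda \to 0$ gives $\p(\exists_{|\ii|\le N}\, \xdi \in A) \ge 2^{-d} N^d q = 2^{-d}\min(N^d q, 1)$. In both cases we obtain the claimed bound.

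The only slightly delicate point — and the one I would write out carefully — is the passage from the inequality $\p(\sum_{|\ii|\le N} h(\xdi) \ge \lambda t a) \ge (1-\lambda)^2 2^{-d}\min(1,t)$ for every $\lambda \in (0,1)$ to the statement about the event $\{\exists_{|\ii|\le N}\, \xdi \in A\}$. Since $h$ takes only the values $0$ and $1$, on the event $\{\sum_{|\ii|\le N} h(\xdi) \ge \lambda t a\}$ with $\lambda t a > 0$ the sum is strictly positive, hence at least one summand equals $1$, i.e.\ $\xdi \in A$ for some $\ii$ with $|\ii| \le N$. Therefore $\{\sum_{|\ii|\le N} h(\xdi) \ge \lambda t a\} \subseteq \{\exists_{|\ii|\le N}\, \xdi \in A\}$ for every $\lambda \in (0,1)$, so the probability of the latter is at least $\sup_{\lambda \in (0,1)} (1-\lambda)^2 2^{-d}\min(1,t) = 2^{-d}\min(1,t)$. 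Combining with the two cases for $t$ above completes the proof. No genuine obstacle arises; the work is entirely in bookkeeping the role of the parameter $a$ (which can be normalized to $1$) and the two regimes of $N^d q$.
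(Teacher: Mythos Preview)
Your proof is correct and follows the same route as the paper: apply Lemma \ref{PalZygmult} with $h=\Ind{A}$, $a=1$, and let $\lambda\to 0^+$. The paper simply takes $t=N^d\p(X\in A)$ in one stroke (the hypothesis $N^d\E h(X)\ge ta$ then holds with equality, and the $\min(1,t)$ in the conclusion of the lemma already handles both regimes), so your case split on whether $N^d q\ge 1$ is unnecessary, but it does no harm.
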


\begin{proof} We apply Lemma \ref{PalZygmult} with $h=I_{A}$,
$a=1$, $t=N^{d}\p(X\in A)$ and $\lambda\rightarrow 0+$.
\end{proof}

\begin{lemma}
\label{sumbelow}
Suppose that $Z_{j}$ are nonnegative r.v.'s, $p>0$ and $a_{j}\in
\R$ are such that
$\p(Z_{j}\geq a_j)\geq p$ for all $j$. Then

\[
\p(\sum_{j}Z_j\geq p\sum_{j}a_{j}/2)\geq p/2.
\]

\end{lemma}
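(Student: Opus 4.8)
The statement is a simple averaging/tail argument, and the plan is to reduce it to a first-moment estimate for the sum $\sum_j Z_j$ conditioned on a favorable event. First I would introduce the indicator variables $\xi_j = \ind{Z_j \ge a_j}$, so that $\E \xi_j \ge p$ by hypothesis, and note the deterministic inequality $Z_j \ge a_j \xi_j$ whenever $a_j \ge 0$; for indices with $a_j < 0$ one simply uses $Z_j \ge 0 \ge a_j \ge a_j\xi_j$ is false in general, so instead I would split the index set into $P = \{j : a_j \ge 0\}$ and $N = \{j : a_j < 0\}$ and observe $\sum_j Z_j \ge \sum_{j\in P} Z_j \ge \sum_{j\in P} a_j\xi_j \ge \sum_{j\in P} a_j\xi_j + \sum_{j\in N} a_j$, the last step because $\sum_{j\in N} a_j \le 0$. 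Actually, cleaner: $\sum_j Z_j \ge \sum_{j\in P} a_j \xi_j$ and separately $\sum_j a_j \le \sum_{j\in P} a_j$, so it suffices to show $\p\big(\sum_{j\in P} a_j\xi_j \ge \tfrac{p}{2}\sum_{j\in P} a_j\big) \ge \tfrac{p}{2}$, after which $\tfrac{p}{2}\sum_{j\in P}a_j \ge \tfrac{p}{2}\sum_j a_j$ gives the claim.

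\textbf{Key step.} Let $S = \sum_{j\in P} a_j \xi_j \ge 0$ and $M = \sum_{j\in P} a_j \ge 0$. Then $\E S = \sum_{j\in P} a_j \E\xi_j \ge pM$ since each $a_j \ge 0$ and $\E\xi_j \ge p$. Also $S \le M$ pointwise, because each $\xi_j \le 1$. Now apply the trivial bound that for a nonnegative random variable $S$ bounded above by $M$ with $\E S \ge pM$, one has $\p(S \ge pM/2) \ge p/2$: indeed, writing $q = \p(S \ge pM/2)$, we get $pM \le \E S \le qM + (1-q)\cdot \tfrac{pM}{2} \le qM + \tfrac{pM}{2}$, hence $qM \ge \tfrac{pM}{2}$ and (if $M > 0$) $q \ge p/2$. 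When $M = 0$ the conclusion $\p(\sum_j Z_j \ge p\sum_j a_j /2) = \p(\sum_j Z_j \ge \text{(something} \le 0)) = 1 \ge p/2$ is immediate since $Z_j \ge 0$ and $p \le 1$ (and if $p > 1$ the hypothesis is vacuous).

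\textbf{Main obstacle.} There is essentially no analytic difficulty here; the only thing to be careful about is the sign handling of the $a_j$'s, which the problem allows to be arbitrary reals. The bookkeeping of restricting to $P = \{j: a_j \ge 0\}$ and discarding the negative terms (which only helps both sides) is the one place where a hasty argument could slip. I would also remark that the constant $2$ is not optimal but is all that is needed downstream. The whole proof is three or four lines once the splitting is in place.
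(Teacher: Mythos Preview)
Your proof is correct and follows essentially the same Markov-type averaging argument as the paper: the paper uses the auxiliary variable $\sum_j \min(Z_j,a_j)$, which has mean at least $p\sum_j a_j$, is bounded above by $\sum_j a_j$, and is dominated by $\sum_j Z_j$, and then derives $\alpha \ge p/2$ from $p\sum_j a_j \le \alpha\sum_j a_j + p\sum_j a_j/2$. Your version with $\sum_{j\in P} a_j\xi_j$ after splitting off the negative $a_j$'s is just a cosmetic variant of the same device.
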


\begin{proof} Let $\alpha:=\p(\sum_{j}Z_j\geq p\sum_{j}a_{j}/2)$,
then

\[
p\sum_j a_j\leq \E(\sum_j \min(Z_j,a_{j}))\leq
\alpha \sum_{j} a_{j} +p\sum_{j} a_{j}/2.
\]

\end{proof}

\begin{theorem}\label{series_integral}

Let $Y$ be a r.v. independent of $X_i^{(j)}$. Suppose that for each $n$, $a_{n}\in\R$,
$h_{n}$ is a $d+1$-dimensional nonnegative kernel such that

\[
\sum_{n}\p\Big(\sum_{|\ii|\leq 2^n}h_{n}(\xdi,Y)\geq
a_{n}\Big)<\infty.
\]

Let $p>0$, then there exists a constant $C_{d}(p)$ depending only on $p$ and $d$
such that the sets

\[
A_{n}:=\big\{x\in S^d\colon \forall_{n\leq m\leq 2^{d-1}n}\
\p_{Y}(h_{m}(x,Y)\geq C_d(p)2^{d(n-m)}a_{m})\geq p\big\},
\]
satisfy $\sum 2^{dn}\p(X\in A_n)<\infty$.
\end{theorem}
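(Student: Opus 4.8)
The plan is to show that the contrapositive of summability forces the sets $A_n$ to have small total mass. Intuitively, if $X\in A_n$ then for a whole block of indices $m$ between $n$ and $2^{d-1}n$ the kernel $h_m(X,\cdot)$ is $Y$-large with probability at least $p$; combining this over many values of $m$ via Corollary \ref{smallsec} and Lemma \ref{sumbelow} should produce a lower bound on $\p(\sum_{|\ii|\le 2^m}h_m(\xdi,Y)\ge a_m)$ of the form (something)$\cdot\min(2^{dm}\p(X\in A_n),1)$, and then the given summability of those probabilities forces $\sum 2^{dn}\p(X\in A_n)<\infty$.

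\textbf{Key steps.} First I would fix $n$ and condition on $Y$. For $m$ in the range $n\le m\le 2^{d-1}n$, consider the event that $X\in A_n$. By the definition of $A_n$, on this event $\p_Y(h_m(X,Y)\ge C_d(p)2^{d(n-m)}a_m)\ge p$, so applying Fubini there is a set $B_{n,m}\subseteq\Sigma^d$ (depending on $Y$) with $\p(X\in B_{n,m})\ge$ (roughly) $\p(X\in A_n)$ on a non-negligible $Y$-event, such that $x\in B_{n,m}$ implies $h_m(x,Y)\ge C_d(p)2^{d(n-m)}a_m$. The second step is to feed $B_{n,m}$ into Corollary \ref{smallsec} with $N=2^m$: this requires checking the hypothesis that for every proper nonempty $I$ and every $x_{I^c}$, $\p_I((x_{I^c},X_I)\in B_{n,m})\le 2^{-m\#I}$; this should be arranged by shrinking $A_n$ (or the thresholding constant) or by noting that $A_n$ can only be large when $h_m$ is genuinely "degenerate"—this is the point where the constant $C_d(p)$ and the precise form of $A_n$ get pinned down, and I expect it to be the technical crux. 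Granting it, Corollary \ref{smallsec} gives $\p(\exists_{|\ii|\le 2^m}\xdi\in B_{n,m})\ge 2^{-d}\min(2^{dm}\p(X\in B_{n,m}),1)$, hence with probability $\gtrsim 2^{-d}\min(2^{dm}\p(X\in A_n),1)$ (times the $Y$-probability $\ge p$), at least one term $h_m(\xdi,Y)$ in the sum is $\ge C_d(p)2^{d(n-m)}a_m$.

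\textbf{Assembling.} The third step uses Lemma \ref{sumbelow}: summing over the block $n\le m\le 2^{d-1}n$, with $Z_m$ a suitable surrogate for $\sum_{|\ii|\le 2^m}h_m(\xdi,Y)$ — actually I would apply it for a single well-chosen $m=m(n)$ in the block, namely the one that makes $2^{d(n-m)}a_m$ as large as possible, or alternatively sum over the block and use that $\sum_{m=n}^{2^{d-1}n}2^{d(n-m)}$ is bounded so the surviving term dominates $a_{m(n)}$. Either way we get $\p(\sum_{|\ii|\le 2^{m(n)}}h_{m(n)}(\xdi,Y)\ge c\, a_{m(n)})\ge c'\min(2^{dn}\p(X\in A_n),1)$ for constants $c,c'$ depending on $p,d$ (here one absorbs $C_d(p)$ by possibly enlarging it). Summing this inequality over $n$ and using the hypothesis $\sum_n\p(\sum_{|\ii|\le 2^n}h_n(\xdi,Y)\ge a_n)<\infty$ (reindexed by $m(n)$, noting $n\mapsto m(n)$ maps $\N$ onto $\N$ with bounded multiplicity since $m(n)\in[n,2^{d-1}n]$) yields $\sum_n\min(2^{dn}\p(X\in A_n),1)<\infty$, and since $\min(t,1)$ summable forces $t$ summable, $\sum_n 2^{dn}\p(X\in A_n)<\infty$, as desired.

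\textbf{Main obstacle.} The hardest part is verifying the hypothesis of Corollary \ref{smallsec} for the sets $B_{n,m}$ — i.e. controlling the "lower-dimensional marginals" $\p_I((x_{I^c},X_I)\in B_{n,m})$ by $2^{-m\#I}$. This is what genuinely uses that $h$ is canonical (completely degenerate) and dictates the right choice of the constant $C_d(p)$ and possibly a refinement of the definition of $A_n$ (intersecting with events where the conditional expectations $\E_I h_m$ are small); I would expect to handle it by the same Paley–Zygmund / degeneracy bookkeeping already used in Lemma \ref{PalZygmult}, possibly at the cost of shrinking the block $[n,2^{d-1}n]$ slightly or weakening $p$ to $p/2$.
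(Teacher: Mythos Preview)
Your sketch correctly identifies that the entire difficulty lies in verifying the marginal hypothesis of Corollary~\ref{smallsec}, namely $\p_I((x_{I^c},X_I)\in\cdot)\le 2^{-m\#I}$ for every proper nonempty $I$. However, your plan for overcoming this obstacle does not work, for two reasons.

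First, your proposed mechanism is based on a wrong premise: you write that ``this is what genuinely uses that $h$ is canonical (completely degenerate)'', but in the present theorem the kernels $h_n$ are merely \emph{nonnegative}, with no degeneracy assumption whatsoever. There is nothing about $h_n$ that forces the conditional probabilities $\p_I((x_{I^c},X_I)\in A_n)$ to be small, and the ``Paley--Zygmund / degeneracy bookkeeping'' you allude to cannot supply this. Second, simply ``shrinking $A_n$'' by intersecting with events of small $I$-marginals is not innocuous: the excised part can carry essentially all of the mass $\p(X\in A_n)$, so you must separately prove that $\sum_n 2^{dn}\p(X\in A_n^{\rm excised})<\infty$, and your outline contains no mechanism for this.

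The paper's proof handles exactly this point by \emph{induction on the dimension $d$}. One passes to the disjoint sets $\tilde A_n=A_n\setminus\bigcup_{m>n}A_m$ and then peels off the bad marginals level by level, constructing $\tilde A_n=A_n^0\supset A_n^1\supset\cdots\supset A_n^{d-1}$, where $A_n^l$ satisfies the marginal bound for all $I$ with $\#I\le l$. The crucial step is showing $\sum_n 2^{dn}\p(X\in A_n^{l-1}\setminus A_n^l)<\infty$: one stratifies the ``bad'' $x_{I^c}$ according to the size of $\p_I((x_{I^c},X_I)\in A_n^{l-1})$, uses Lemma~\ref{PalZygmult} (applied in the $I$-coordinates) together with Lemma~\ref{sumbelow} to show that on such bad $x_{I^c}$ the \emph{partial sum} $\tilde h_r(x_{I^c},\tilde Y):=\sum_{|\ii_I|\le 2^r}h_r(x_{I^c},\xdii,Y)$ is large with probability bounded below, and then invokes the \emph{induction hypothesis} for the $(d-\#I)$-dimensional problem with kernel $\tilde h_r$. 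This inductive reduction, together with the exploitation of the full range $n\le m\le 2^{d-1}n$ (not a single $m(n)$) through the disjointness of the $\tilde A_n$, is the substance of the argument and is entirely absent from your outline.
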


\begin{proof}

We will show by induction on $d$, that the assertion holds with
$C_{1}(p):=1$, $C_{2}(p):=12/p$ and

\[
  C_{d}(p):=12p^{-1}\max_{1\leq l\leq d-1}C_{d-l}(2^{-l-4}p/3)
  \mbox{ for } d=3,4,\ldots.
\]

For $d=1$ we have

\begin{align*}
\frac{1}{2}\min(2^n\p(X\in A_n),1)&\leq
\p(\exists_{|\ii|\leq 2^{n}}\ \xdi\in A_n)
\\
&=\p(\exists_{|\ii|\leq 2^{n}}\ \p_{Y}(h_n(\xdi,Y)\geq
a_n)\geq p)
\\
&\leq \p(\p_{Y}(\sum_{|\ii|\leq 2^{n}}h_n(\xdi,Y)\geq
a_n)\geq p)
\\
&\leq p^{-1}\p(\sum_{|\ii|\leq 2^{n}}h_n(\xdi,Y)\geq
a_n).
\end{align*}

Before investigating the case $d>1$ let us define

\[
  \tilde{A}_n:=A_{n}\setminus\bigcup_{m>n}A_m.
\]

The sets $\tilde{A}_n$ are pairwise disjoint and obviously $\tilde{A}_n\subset A_n$.
Notice that since $C_{d}(p)\geq 1$,

\[
  \p(X\in A_n)\leq \p(\p_{Y}(h_n(X,Y)\geq a_n))
  \leq p^{-1}\p(\sum_{|\ii|\leq 2^n}h_{n}(\xdi,Y)\geq a_n).
\]
Hence $\sum_{n}\p(X\in A_n)<\infty$, so $\p(X\in
\limsup A_n)=0$.
But if $x\notin \limsup A_n$, then
$\sum_{n}2^{nd}I_{A_{n}}(x)\leq \sum_{n}2^{nd+1}I_{\tilde{A}_n}(x)$. So it is enough to show
that $\sum 2^{dn}\p(X\in \tilde{A}_n)<\infty$.

\paragraph{Induction step} Suppose that the statement holds for all $d'<d$, we will
show it for $d$. First we will inductively construct sets

\[
\tilde{A}_{n}=A_{n}^{0}\supset A_{n}^1\supset\ldots \supset A_{n}^{d-1}
\]
such that for $1\leq l\leq d-1$,

\begin{equation}
\label{cond1}
\forall_{\emptyset\subsetneq I\subsetneq \{1,\ldots,d-1\},\;\#I\leq l}\;
\forall_{x_{I^c}}\ \p_{I}((x_{I^c},X_{I})\in A_n^{l})\leq
2^{-n\#I}
\end{equation}
and

\begin{equation}
\label{cond2}
 \sum_{n}2^{nd}\p(X\in A_{n}^{l-1}\setminus A_{n}^l)<\infty.
\end{equation}

Suppose that $1\leq l\leq d-1$ and the set $A_n^{l-1}$ was already defined.
Let $I\subset\{1,\ldots,d\}$ be such that $\#I=l$ and let $j\in I$. Notice
that

\[
\p_I((x_{I^c},X_{I})\in
A_n^{l-1})=\E_{j}\p_{I\setminus\{j\}}
((x_{I^c},X_j,X_{I\setminus\{j\}})\in A_n^{l-1})
\leq 2^{-n(l-1)}
\]
by the property (\ref{cond1}) of the set $A_n^{l-1}$. Let us define for
$n(l-1)+1\leq k\leq nl$,

\[
B_{n,k}^{I}:=\{x_{I^c}\colon \p_I((x_{I^c},X_{I})\in
A_n^{l-1})\in
(2^{-k},2^{-k+1}]\}
\]
and

\[
B_{n}^{I}:=\bigcup_{k=n(l-1)+1}^{nl}B_{n,k}^{I}=\{x_{I^c}\colon
\p_I((x_{I^c},X_{I})\in A_n^{l-1})>2^{-nl}\}.
\]

We have

\begin{align*}
  \sum_{n}2^{dn}\p(X\in A_{n}^{l-1}, X_{I^{c}}\in B_{n}^{I})
  &\leq
  2\sum_{n}\sum_{k=n(l-1)+1}^{nl}2^{dn-k}\p(X_{I^{c}}\in B_{n}^{I})
\\
  &=2\E k_{1}^{I}(X_{I^c}),
\end{align*}
where

\[
  k_1^{I}(x_{I^c}):=\sum_{n}\sum_{k=n(l-1)+1}^{nl}2^{dn-k}I_{B_{n,k}^I}(x_{I^c}).
\]
Let $m\ge 1$ and

\[
  C_{m}^{I}:=\{x_{I^c}\colon 2^{(m+1)(d-l)}> k_1(x_{I^c})\geq 2^{m(d-l)}\}.
\]
Notice that for $n>m$ and $k\leq nl$, $2^{dn-k}\geq 2^{(d-l)(m+1)}$, moreover

\[
  \sum_{n<m/2}\sum_{k=n(l-1)+1}^{nl}2^{dn-k}\leq \sum_{n<m/2}2^{(d-l+1)n}\leq
  \frac{4}{3}2^{(d-l+1)(m-1)/2}\leq \frac{2}{3}2^{(d-l)m}.
\]
Hence

\begin{align}\label{oszacowanie}
  x_{I^c}\in C_{m}^{I}\ \Rightarrow\
  \sum_{m/2\leq n\leq m}\sum_{k=n(l-1)+1}^{nl}2^{dn-k}I_{B_{n,k}^I}(x_{I^c})\geq
  \frac{1}{3}2^{(d-l)m}.
\end{align}
Let $m\leq r\leq 2^{d-2}m$,
if $m/2\leq n\leq m$, then since $A_{n}^{l-1}\subset A_n$ we have for all $x\in S^d$,

\[
  \p_{Y}(h_{r}(x,Y)\geq C_{d}(p)2^{d(n-r)}a_r I_{A_n^{l-1}}(x))\geq p,
\]
therefore, since $A_{n}^{l-1}\subset\tilde{A}_n$ are pairwise disjoint,

\[
  \p_{Y}\Big(h_{r}(x,Y)\geq C_{d}(p)2^{-dr}a_r
  \sum_{m/2\leq n\leq m}2^{dn}I_{A_n^{l-1}}(x)\Big)\geq p.
\]
Hence, by Lemma \ref{sumbelow},

\begin{equation}
\label{cased_est1}
\p_{Y}\Big(\sum_{|\ii_I|\leq
2^r}h_{r}(x_{I^c},\xdii,Y)\geq
\frac{p}{2}C_{d}(p)2^{-dr}a_r
  \sum_{|\ii_I|\leq 2^r}k_{2,x_{I^{c}}}(\xdii)\Big)\geq \frac{p}{2},
\end{equation}
where

\[
  k_{2,x_{I^{c}}}(x_I):=\sum_{m/2\leq n\leq m}2^{dn}I_{A_n^{l-1}}(x_{I^c},x_I).
\]

We have $\|k_{2,x_{I^{c}}}\|_{\infty}\leq 2^{dm}$ and
for $\emptyset\neq J\subsetneq I$, by the property (\ref{cond1}) of $A_n^{l-1}$,

\begin{align*}
\E_{J}k_{2,x_{I^c}}(x_{I\setminus J},X_J)&
=\sum_{m/2\leq n\leq m}2^{dn}
\p_J\Big((x_{I^c},x_{I\setminus J},X_J)\in A_{n}^{l-1}\Big)
\\
&\leq \sum_{m/2\leq n\leq m}2^{(d-\#J)n}\leq 2^{(d-\#J)m+1}.
\end{align*}
Moreover for $x_{I^c}\in C_m^I$, by the definition of $B_{n,k}^I$ and (\ref{oszacowanie}),

\begin{align*}
\E k_{2,x_{I^c}}(X_{I})&\geq \sum_{m/2\leq n\leq m}
\sum_{k=n(l-1)+1}^{nl}2^{dn}
\p_{I}((x_{I^c},X_I)\in A_n^{l-1})I_{B_{n,k}^I}(x_{I^c})
\\
&\geq \sum_{m/2\leq n\leq m}\sum_{k=n(l-1)+1}^{nl}
2^{dn-k}I_{B_{n,k}^I}(x_{I^c})\geq
\frac{1}{3}2^{(d-l)m}.
\end{align*}
Therefore by Lemma \ref{PalZygmult} (with $l$ instead of $d$ and
$a=2^{(d-l)m+rl+1},t=1/6,N=2^{r}, \lambda=1/2$), for $m \le r \le 2^{d-2}m$,

\[
  \p\Big(\sum_{|\ii_{I}|\leq 2^r}k_{2,x_{I^c}}(\xdii)\geq \frac{1}{6}2^{(d-l)m+rl}\Big)\geq \frac{1}{3}2^{-l-3}.
\]
Combining the above estimate with (\ref{cased_est1}) we get (for $x_{I^c}\in C_m^I$
and $m\leq r\leq 2^{d-2}m$),

\[
  \p_{I,Y}\Big(\sum_{|\ii_{I}|\leq 2^r}h_{r}(x_{I^c},\xdii,Y)
  \geq \frac{p}{12}C_{d}(p)2^{(d-l)(m-r)}a_r\Big)
  \geq \frac{1}{3}2^{-l-4}p.
\]
Let us define $\tilde{Y}:=((X_{i}^{(j)})_{j\in I},Y)$ and
$\tilde{h}_{n}(x_{I^c},\tilde{Y}):=\sum_{|\ii_{I}|\leq 2^n}
h_n(x_{I^c},\xdii,Y)$.
Then
\[
  \sum_{n}\p(\sum_{|\ii_{I^c}|\leq 2^n}\tilde{h}_{n}(\xdiic,\tilde{Y})
  \geq a_n)
  =\sum_{n}\p(\sum_{|\ii|\leq 2^n}h_{n}(\xdi,Y)\geq a_n)
  <\infty.
\]
Moreover (since $C_{d}(p)\geq 12p^{-1}C_{d-l}(2^{-l-4}p/3)$),

\[
  C_{m}^{I}\subseteq\Big\{\forall_{m\leq r\leq 2^{d-2}m}\
  \p_{\tilde{Y}}(\tilde{h}_r(x_{I^c},\tilde{Y})\geq C_{d-l}(2^{-l-4}p/3)2^{(d-l)(m-r)}a_r)
  \geq 2^{-l-4}p/3\Big\}.
\]
Hence by the induction assumption,

\begin{displaymath}
\sum_{m}2^{(d-l)m}\p(X_{I^c}\in C_m^{I})<\infty,
\end{displaymath}
so
$\E k_{1}^{I}(X_{I^{c}})<\infty$ and thus

\begin{equation}
\label{induct1}
 \forall_{\# I=l}\
 \sum_{n}2^{dn}\p(X\in A_{n}^{l-1}, X_{I^{c}}\in B_{n}^{I})<\infty.
\end{equation}
We set

\[
  A_{n}^{l}:=\{x\in A_{n}^{l-1}\colon x_{I^{c}}\notin B_{n}^{I} \mbox{ for all }
  I\subset\{1,\ldots,d\}, \#I=l\}.
\]

The set $A_{n}^{l}$ satisfies the condition (\ref{cond1}) by the definition
of $B_{n}^{I}$ and the property (\ref{cond1}) for $A_{n}^{l-1}$. The
condition (\ref{cond2}) follows by (\ref{induct1}).

Notice that the set $A_{n}^{d-1}$ satisfies the assumptions of Corollary
\ref{smallsec} with $N=2^n$, therefore if $C_{d}(p)\geq 1$,

\begin{align*}
2^{-d}\min(1,2^{nd}\p(X\in A_{n}^{d-1}))
&\leq \p(\exists_{|\ii|\leq 2^{n}}\ \xdi\in A_{n}^{d-1})
\leq \p(\exists_{|\ii|\leq 2^{n}}\ \xdi\in
\tilde{A}_{n})
\\
&\leq \p(\exists_{|\ii|\leq 2^{n}}\
\p_{Y}(h_{n}(\xdi,Y)\geq C_d(p)a_n)\geq p)
\\
&\leq \p(
\p_{Y}(\sum_{|\ii|\leq 2^n}h_{n}(\xdi,Y)\geq a_n)\geq p)
\\
&\leq p^{-1}\p(\sum_{|\ii|\leq 2^n}h_{n}(\xdi,Y)\geq
a_n).
\end{align*}
Therefore $\sum_{n}2^{nd}\p(X\in A_{n}^{d-1})<\infty$, so
by (\ref{cond2}) we get

\[
\sum_n 2^{nd}\p\Big(X\in \tilde{A}_{n})=
\sum_{n} 2^{nd}(\sum_{l=1}^{d-1}\p(X\in
A_{n}^{l-1}\setminus A_{n}^l)
+\p(X\in A_{n}^{d-1})\Big)<\infty.
\]
\end{proof}

\begin{cor}
\label{convergent_series}
If

\[
  \sum_{n}\p\Big(\sum_{|\ii|\leq 2^n}h^2(\xdi)\geq
  \varepsilon 2^{nd}(\log n)^{\alpha}\Big)<\infty
\]

for some $\varepsilon>0$ and $\alpha\in \R$,

then $\E \frac{h^2}{(\llog |h|)^{\alpha}}<\infty$.

\end{cor}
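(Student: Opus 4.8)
The plan is to reduce Corollary \ref{convergent_series} to Theorem \ref{series_integral} by choosing the data $(h_n, a_n, Y)$ appropriately. Set $d$-dimensional kernels: put the extra variable $Y$ to be trivial (or just take $h_n(\xdi, Y) = h^2(\xdi)$, ignoring the $(d+1)$-st coordinate — one may formally let $Y$ be any independent variable and define $h_n$ not to depend on it), and set $a_n := \varepsilon 2^{nd}(\log n)^\alpha$. The hypothesis of Corollary \ref{convergent_series} is then exactly the hypothesis of Theorem \ref{series_integral}. Fixing some convenient $p$, say $p = 1/2$, the theorem yields a constant $C := C_d(1/2)$ and sets
\[
A_n = \big\{ x \in \Sigma^d \colon \forall_{n \le m \le 2^{d-1}n}\ h^2(x) \ge C\, 2^{d(n-m)} a_m \big\}
\]
(the probability $\p_Y(\cdot)\ge p$ is vacuous here since $h_n$ does not depend on $Y$) with $\sum_n 2^{dn}\p(X \in A_n) < \infty$.

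Next I would unwind what membership in $A_n$ says. Since $a_m = \varepsilon 2^{md}(\log m)^\alpha$, we have $2^{d(n-m)}a_m = \varepsilon 2^{dn}(\log m)^\alpha$, so the condition defining $A_n$ is
\[
h^2(x) \ge C\varepsilon\, 2^{dn} \max_{n \le m \le 2^{d-1}n} (\log m)^\alpha .
\]
If $\alpha \ge 0$ the max is $(\log(2^{d-1}n))^\alpha \asymp (\log n)^\alpha$; if $\alpha < 0$ the max is $(\log n)^\alpha$. Either way, $\max_{n\le m\le 2^{d-1}n}(\log m)^\alpha \ge c_{d,\alpha}(\log n)^\alpha$ for a positive constant, hence there is $c = c(d,\alpha,\varepsilon,C) > 0$ with
\[
A_n \supseteq \big\{ x \colon h^2(x) \ge c\, 2^{dn}(\log n)^\alpha \big\} \qu(\text{after absorbing constants; for }n\ge n_0).
\]
Wait — I must be careful about the direction of the inclusion. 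What I actually need is an \emph{upper} bound for $\sum 2^{dn}\p(X\in A_n)$ to transfer to a bound for $\E h^2/(\llog|h|)^\alpha$, so I want $A_n$ to \emph{contain} the relevant level set, i.e. the displayed inclusion above is the useful one: from $\sum_n 2^{dn}\p(h^2 \ge c\, 2^{dn}(\log n)^\alpha) \le \sum_n 2^{dn}\p(X\in A_n) + (\text{finite}) < \infty$.

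Finally I would convert this summability statement into the integrability claim $\E\, h^2/(\llog|h|)^\alpha < \infty$ by a standard dyadic layer-cake comparison. Writing $u_n = c\,2^{dn}(\log n)^\alpha$, on the event $\{u_n \le h^2 < u_{n+1}\}$ one has $h^2 \le u_{n+1} \asymp 2^{dn}(\log n)^\alpha$ and $\llog|h| = \loglog(|h|\vee e^e) \asymp \log n$ (since $\log h^2 \asymp dn$ there, so $\log\log h^2 \asymp \log n$ for large $n$), whence $h^2/(\llog|h|)^\alpha \asymp 2^{dn}$ on that layer. Summing,
\[
\E \frac{h^2}{(\llog|h|)^\alpha} \le L + L\sum_n 2^{dn}\,\p\big(u_n \le h^2\big) \le L + L\sum_n 2^{dn}\,\p\big(h^2 \ge c\,2^{dn}(\log n)^\alpha\big) < \infty,
\]
where the constant $L$ depends on $d$ and $\alpha$ (and one uses Lemma \ref{nowy_lemat_nie_mam_sily_wymyslac_nazwy}-style bookkeeping, or just crude estimates, to handle the finitely many small-$n$ layers and the $e^e$ truncation in $\llog$). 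The main obstacle, and the only place demanding genuine care, is the bookkeeping in this last step when $\alpha < 0$: there the comparison $h^2/(\llog|h|)^\alpha \asymp 2^{dn}$ on the $n$-th layer involves dividing by the possibly-small quantity $(\log n)^\alpha$, and one must check the geometric sum $\sum 2^{dn}$ genuinely dominates rather than just matches the tail sum — which it does because the layers $\{u_n\le h^2<u_{n+1}\}$ are spaced by a factor $\asymp 2^d$, independent of $\alpha$. Everything else is a direct substitution into Theorem \ref{series_integral}.
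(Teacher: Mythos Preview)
Your proposal is correct and follows essentially the same route as the paper: apply Theorem~\ref{series_integral} with deterministic $Y$, $h_n=h^2$ and $a_n=\varepsilon 2^{nd}(\log n)^\alpha$, observe that the set $A_n$ contains a level set $\{h^2\ge c\,2^{dn}(\log n)^\alpha\}$, and finish with a standard dyadic layer-cake to convert $\sum_n 2^{dn}\p(h^2\ge c\,2^{dn}(\log n)^\alpha)<\infty$ into $\E h^2/(\llog|h|)^\alpha<\infty$. One small slip: to obtain the inclusion $\{h^2\ge c\,2^{dn}(\log n)^\alpha\}\subseteq A_n$ you need the \emph{upper} bound $\max_{n\le m\le 2^{d-1}n}(\log m)^\alpha\le C_{d,\alpha}(\log n)^\alpha$, not the lower bound you wrote---but this follows at once from the $\asymp$ you already established.
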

\begin{proof}
We apply Theorem \ref{series_integral} with $h_n = h^2$ and $a_n = \varepsilon2^{nd}\log^d n$ in the degenerate case when $Y$ is deterministic. It is easy to notice that
$h^2 \ge \tilde{C}_d(p,\varepsilon)2^{dn}\log^d n$ implies that
\begin{displaymath}
\forall_{n\le m\le 2^{d-1}n} \;h^2 \ge C_d(p)2^{d(n-m)}a_m.
\end{displaymath}
\end{proof}

To prove the necessity part of Theorem \ref{decoupled_lil_theorem} we will also need the following Lemmas
\begin{lemma}[\cite{AmcLat}, Lemma 12] \label{variance}
Let $g \colon \Sigma^d \to \R$ be a square integrable function.
Then
\begin{displaymath}
\Var (\sum_{|\ii| \le n} g(\xdi)) \le (2^d - 1) n^{2d-1}\E g(X)^2.
\end{displaymath}
\end{lemma}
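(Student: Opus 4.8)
The plan is to expand the variance directly as a double sum of covariances over pairs of multiindices, and then group the pairs according to the set of coordinates on which they agree. Writing $S=\sum_{|\ii|\le n}g(\xdi)$, we have
\[
\Var(S)=\sum_{|\ii|\le n}\sum_{|\jj|\le n}\mathrm{Cov}\big(g(\xdi),g(\xd_\jj)\big),
\]
and for each pair we set $I=I(\ii,\jj):=\{k\in I_d\colon i_k=j_k\}$. If $I=\emptyset$, then $\xdi$ and $\xd_\jj$ are built from disjoint families of independent variables, so the covariance vanishes. If $I\neq\emptyset$, I would condition on the variables indexed by $I$: letting $g_I(x_I):=\E_{I^c}g(X)$ (i.e. $g$ integrated over the coordinates in $I^c$, with the $I$-coordinates fixed at $x_I$), the two factors $g(\xdi)$ and $g(\xd_\jj)$ become conditionally independent given $\sigma\big((X^{(k)}_{i_k})_{k\in I}\big)$, since their remaining ($I^c$-indexed) variables are independent of one another and of the conditioning. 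As $\ii$ and $\jj$ agree on $I$, both conditional expectations equal $g_I\big((X^{(k)}_{i_k})_{k\in I}\big)$, hence
\[
\E\big[g(\xdi)g(\xd_\jj)\big]=\E\big[g_I(X_I)^2\big],\qquad
\mathrm{Cov}\big(g(\xdi),g(\xd_\jj)\big)=\Var\big(g_I(X_I)\big).
\]

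The only inequality in the argument is the conditional-variance (tower) bound $\Var(g_I(X_I))=\Var\big(\E[g(X)\mid X_I]\big)\le\Var(g(X))\le\E g(X)^2$, which is also the only place a constant is lost. It then remains to count pairs: for a fixed nonempty $I\subseteq I_d$, the number of pairs $(\ii,\jj)$ with $|\ii|,|\jj|\le n$ and $I(\ii,\jj)=I$ is at most $n^{\#I}\cdot n^{\#I^c}\cdot n^{\#I^c}=n^{2d-\#I}\le n^{2d-1}$, using $\#I\ge 1$. Summing the estimate $\E g(X)^2$ over these pairs and then over the $2^d-1$ nonempty subsets $I$ gives
\[
\Var(S)\le\sum_{\emptyset\neq I\subseteq I_d}n^{2d-\#I}\,\E g(X)^2\le(2^d-1)\,n^{2d-1}\,\E g(X)^2,
\]
which is the claim.

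I do not foresee a genuine obstacle. The argument is an exact covariance expansion together with the elementary conditional-variance inequality and a crude but sufficient count of multiindex pairs; no degeneracy of $g$ is needed, and the case $\ii=\jj$ (i.e. $I=I_d$) is automatically absorbed since $n^d\le n^{2d-1}$ for $d\ge 1$. If one wished for the sharpest constant one could track the over-counting in $n^{2d-\#I}$, but the stated bound follows from the rougher estimate above.
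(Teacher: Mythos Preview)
Your proof is correct. The paper does not give its own proof of this lemma (it is cited without proof from \cite{AmcLat}), but your argument is exactly the standard one: expand the variance as a double sum over $(\ii,\jj)$, group by the coincidence set $I=\{k:i_k=j_k\}$, use conditional independence given the shared coordinates, bound each covariance by $\E g(X)^2$, and count the pairs as $n^{2d-\#I}\le n^{2d-1}$ for $I\neq\emptyset$. Indeed the same decomposition over coincidence sets appears explicitly in the paper's proof of Lemma~\ref{PalZygmult}, so your approach aligns with the techniques the authors use.
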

\begin{lemma}[\cite{AmcLat}, Lemma 5] \label{calkowalnosc} If $\E (|h|^2\wedge u) = \mathcal{O}((\loglog
u)^{\beta})$ then
\begin{displaymath}
\E |h|\ind{|h| \ge s} = \mathcal{O}(\frac{(\loglog
s)^{\beta}}{s}).
\end{displaymath}
\end{lemma}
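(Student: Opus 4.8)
The plan is to reduce the moment-type hypothesis to a pointwise tail bound on $|h|$ and then recover the truncated first moment by a dyadic (layer-cake) summation. We may assume $\beta\ge 0$: for $\beta<0$ the assumption forces $\E|h|^2=0$ and there is nothing to prove, while for $\beta=0$ the claim is immediate from $\E[|h|\ind{|h|\ge s}]\le \E|h|^2/s$.

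\emph{Step 1 (from the assumption to a tail bound).} Since $|h|^2\wedge t^2\ge t^2\ind{|h|\ge t}$, applying the hypothesis $\E(|h|^2\wedge u)=\mathcal{O}((\loglog u)^\beta)$ with $u=t^2$ and using $\loglog(t^2)=\log 2+\loglog t\le L\loglog t$ for $t$ large, we get a constant $K$ and a threshold $t_0$ with
\[
\p(|h|\ge t)\le \frac{\E(|h|^2\wedge t^2)}{t^2}\le \frac{K(\loglog t)^\beta}{t^2},\qquad t\ge t_0.
\]

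\emph{Step 2 (dyadic decomposition).} For $s\ge t_0$ split the truncated expectation over the levels $\{2^k s\le |h|<2^{k+1}s\}$:
\[
\E\big[|h|\ind{|h|\ge s}\big]=\sum_{k\ge 0}\E\big[|h|\ind{2^k s\le |h|<2^{k+1}s}\big]\le \sum_{k\ge 0}2^{k+1}s\,\p(|h|\ge 2^k s)\le \frac{2K}{s}\sum_{k\ge 0}2^{-k}\big(\loglog(2^k s)\big)^\beta .
\]

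\emph{Step 3 (absorbing the slowly varying factor).} It remains to show $\sum_{k\ge 0}2^{-k}(\loglog(2^k s))^\beta\le L_\beta(\loglog s)^\beta$ for $s$ large. Here one uses that, for $s\ge 2$, $\loglog(2^k s)=\log(k\log 2+\log s)\le \log\big((k+1)\log s\big)=\log(k+1)+\loglog s$, together with the elementary inequality $(a+b)^\beta\le L_\beta(a^\beta+b^\beta)$; this gives
\[
\sum_{k\ge 0}2^{-k}\big(\loglog(2^k s)\big)^\beta\le L_\beta\Big(\sum_{k\ge 0}2^{-k}(\log(k+1))^\beta+2(\loglog s)^\beta\Big),
\]
and the first series converges, so for $s$ large the whole expression is $\le L_\beta'(\loglog s)^\beta$ (using that $\loglog s$ is bounded below). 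Combining with Step 2 yields $\E[|h|\ind{|h|\ge s}]=\mathcal{O}((\loglog s)^\beta/s)$.

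The only mildly delicate point — and it is purely technical — is Step 3: one must check that the ultra-slow growth of $(\loglog)^\beta$ neither destroys the convergence of the geometric sum nor produces an extra factor such as $\loglog\loglog s$; the inequality $\loglog(2^k s)\le \log(k+1)+\loglog s$ is precisely what makes the bound come out proportional to $(\loglog s)^\beta$. Everything else is routine.
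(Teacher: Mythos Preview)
Your argument is correct. The present paper does not give its own proof of this lemma: it is simply quoted as Lemma~5 of \cite{AmcLat}, so there is no in-paper argument to compare against. Your route (tail bound $\p(|h|\ge t)\le t^{-2}\E(|h|^2\wedge t^2)$ followed by a dyadic layer-cake summation, with the $\loglog$ factor controlled via $\loglog(2^k s)\le \log(k+1)+\loglog s$) is the standard one and is presumably close to what the cited reference does.
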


\begin{lemma}\label{chaos}
Let $(a_\ii)_{\ii \in I_n^d}$ be a $d$--indexed array of vectors from a Hilbert space $H$. Consider a random variable
\begin{displaymath}
S := \Big|\sum_{|\ii| \le n}
a_\ii\prod_{k=1}^d\varepsilon_{i_k}^{(k)}\Big| = \Big|\sum_{|\ii|\le
n}a_\ii\edi\Big|.
\end{displaymath}
For any set $K\subseteq I_d$ and a partition $\mathcal{J} = \{J_1,\ldots,J_m\} \in
\mathcal{P}_{I_d\backslash K}$ let us define
\begin{align*}
\|(a_\ii)\|_{K,\mathcal{J},p}^\ast := \sup\Big\{&|\sum_{|\ii| \le n}
\langle a_\ii,\alpha^{(0)}_{\ii_K}\rangle\prod_{k=1}^m \alpha_{\ii_{J_k}}^{(k)}| \colon
\sum_{\ii_K}|\alpha^{(0)}_{\ii_K}|^2 \le 1,
\sum_{\ii_{J_k}}(\alpha_{\ii_{J_k}}^{(k)})^2 \le p,\\
&\forall_{i_{\max J_k}\in I_n}
\sum_{\ii_{\nomax{J_k}}}(\alpha_{\ii_{J_k}}^{(k)})^2 \le 1, \; k
= 1,\ldots,m\Big\},
\end{align*}
where $\nomax{J} = J\backslash \{\max J\}$ (here  $\sum_{\ii_{\emptyset}} a_\ii = a_\ii$).

Then, for all $p \ge 1$,
\begin{displaymath}
\|S\|_p \ge \frac{1}{L_d} \sum_{K\subseteq I_d, \mathcal{J}\in \mathcal{P}_{I_d\backslash K}}
\|(a_\ii)\|_{K,\mathcal{J},p}^\ast.
\end{displaymath}
In particular for some constant $c_d$
\begin{displaymath}
\p(S \ge c_d\sum_{K\subseteq I_d, \mathcal{J}\in \mathcal{P}_{I_d\backslash K}}
\|(a_\ii)\|_{K,\mathcal{J},p}^\ast) \ge c_d\wedge e^{-p}.
\end{displaymath}
\end{lemma}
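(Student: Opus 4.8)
The statement is a lower bound for the $p$-th moment (and a tail bound) of a Rademacher chaos with coefficients in a Hilbert space, in terms of the family of ``mixed'' norms $\|(a_\ii)\|_{K,\mathcal{J},p}^\ast$. The strategy is the standard ``decoupling + conditioning on successive blocks of Rademachers'' argument, combined with the Paley--Zygmund-type lower bound for a single (vector-valued) Rademacher sum. I would organise it as an induction on $d$, where at each step I strip off the last coordinate $i_d$, replace the tensor of $d$ Rademachers by $d$ independent copies (decoupling is harmless for lower bounds, up to constants $L_d$), and freeze all but one block.

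\emph{Step 1: reduction to decoupled chaos and a single variable.} For a lower bound on $\|S\|_p$ we may pass to the fully decoupled chaos $\sum_{|\ii|\le n} a_\ii \edi$, since decoupling inequalities for chaoses are two-sided up to $L_d$. I would then view this as a sum over $i_d$: writing $b_{\ii_{I_{d-1}}}(\varepsilon^{(d)}) := \sum_{i_d} a_\ii \varepsilon_{i_d}^{(d)}$, this is a $\Sigma$-valued (in fact $H$-valued) chaos of order $d-1$ in the remaining Rademachers whose coefficients are themselves random vectors. Conditionally on $\varepsilon^{(d)}$, the induction hypothesis applies and gives $\|S\|_p \gtrsim_d \sum_{K,\mathcal{J}} \| (b_\cdot(\varepsilon^{(d)}))\|_{K,\mathcal{J},p}^\ast$ for the order-$(d-1)$ norms; it then remains to take the $L^p(\varepsilon^{(d)})$ norm of the right-hand side and recognise, for each term, a one-dimensional Rademacher sum in $\varepsilon^{(d)}$ whose moments are controlled from below by the Khinchine inequality. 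The two regimes in the definition of $\|\cdot\|^\ast_{K,\mathcal{J},p}$ — the constraint $\sum_{\ii_{J_k}}(\alpha^{(k)}_{\ii_{J_k}})^2\le p$ versus the ``slice'' constraint $\sum_{\ii_{\nomax J_k}}(\alpha^{(k)}_{\ii_{J_k}})^2\le 1$ for each fixed value of the maximal index — are precisely what one obtains by splitting the $\varepsilon^{(d)}$-sum into its ``$\sqrt p$-mass'' behaviour at scale $p$ and its sub-Gaussian behaviour at scale $1$; this is where the base case $d=1$ (Khinchine: $\|\sum_i a_i \varepsilon_i\|_p \gtrsim \sqrt p\,(\sum |a_i|^2 \wedge 1)^{1/2}$ plus the $\ell_2$-bound) does the real work, and where the index $d$ in $\max J$ gets attached to the block containing $d$.

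\emph{Step 2: matching the new block $\{d\}$ with the old partition.} The bookkeeping step is to check that, as $\varepsilon^{(d)}$ is integrated out, each order-$(d-1)$ norm $\|(b)\|_{L,\mathcal{K},p}^\ast$ produces exactly one order-$d$ norm $\|(a)\|_{K,\mathcal{J},p}^\ast$: either $d$ is adjoined to one of the existing blocks $J_i$ of $\mathcal{K}$ (and then $\max J_i$ becomes $d$, which is why the ``no-max'' slice in the new block is the full old block), or $\{d\}$ becomes a new singleton block, or — when $K=\emptyset$ at the $(d-1)$-level — $d$ is swallowed into $K$, turning the $H$-valued functional of the $K$-coordinates into an $H$-valued functional of $K\cup\{d\}$. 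Tracking the Rademacher scale through this identification shows the exponents of $p$ are consistent, so summing over all order-$(d-1)$ terms reproduces the full sum over $K\subseteq I_d,\ \mathcal{J}\in\mathcal{P}_{I_d\backslash K}$ on the right-hand side, with a loss of only a multiplicative $L_d$.

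\emph{Step 3: from the moment bound to the tail bound.} Once $\|S\|_p \ge L_d^{-1} M$ with $M := \sum_{K,\mathcal{J}} \|(a_\ii)\|_{K,\mathcal{J},p}^\ast$, the in-probability statement follows from the Paley--Zygmund inequality together with the reverse relation $\|S\|_{2p} \le L_d^{?}\,\|S\|_p$ coming from hypercontractivity of Rademacher chaos of fixed order $d$ (equivalently, from the upper moment bounds dual to the quantities above): $\|S\|_{2p}\le L_d\|S\|_p$ gives $\p(S\ge \tfrac12\|S\|_p)\ge c_d$, and choosing the constant $c_d$ small enough absorbs the $L_d$'s; the $e^{-p}$ branch accounts for the possibility that $\|S\|_p$ is dominated by its sub-Gaussian part, handled by comparing with $\|S\|_q$ for $q\asymp p$ and a standard Chebyshev/Paley--Zygmund dichotomy.

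\emph{Main obstacle.} The analytic content (Khinchine/hypercontractivity, Paley--Zygmund) is routine; the genuine difficulty is Step 2 — verifying that the recursive ``merge the new coordinate $d$'' operation on partitions is a bijection onto the order-$d$ index set $\{(K,\mathcal{J})\}$ that preserves both the $\ell_2$/slice constraints and the powers of $p$. This is exactly the kind of notation-heavy combinatorial check that already appeared in the proof of Lemma \ref{crucial_lemma}, and as there I expect the cleanest route is to phrase the chaos and all the $\|\cdot\|^\ast$-norms as injective tensor norms on $H\otimes\bigotimes_{k}\ell_2^{(n)}$ (with the two scales $p$ and $1$ encoded by a fixed two-parameter norm on each $\ell_2$ factor), so that stripping the $d$-th factor and the associativity/distributivity of $\otimes$ over $\oplus$ do the combinatorics automatically.
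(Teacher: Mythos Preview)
Your plan is valid in principle but takes a genuinely different route from the paper. You propose an induction on $d$, peeling off the last coordinate and redoing the partition combinatorics at each step. The paper instead \emph{imports} the real-valued case wholesale: it cites the already-established lower bound for scalar Rademacher chaoses (Lemma~3 in \cite{AmcLat}, recorded here as \eqref{real_chaos}) and reduces the Hilbert-valued statement to it in one short step. The reduction is this: for fixed $K\subseteq I_d$, first apply hypercontractivity of the order-$\#K$ chaos in the variables $(\varepsilon^{(k)})_{k\in K}$, conditionally on the remaining Rademachers, to replace those signs by the $\ell_2(\ii_K;H)$-norm $(\sum_{\ii_K}|\sum_{\ii_{I_d\setminus K}} a_\ii\prod_{k\notin K}\varepsilon_{i_k}^{(k)}|^2)^{1/2}$; then dualize this norm as a supremum over $\alpha^{(0)}_{\ii_K}$ with $\sum|\alpha^{(0)}_{\ii_K}|^2\le 1$, pull the sup outside the expectation by Jensen, and apply the scalar result to the resulting real chaos of order $d-\#K$. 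This yields $\|(a_\ii)\|_{K,\mathcal{J},p}^\ast$ directly, with no inductive bookkeeping at all --- precisely the ``main obstacle'' you identified simply does not arise, because all the partition combinatorics is hidden inside the cited scalar lemma.

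What your approach buys is self-containedness (you would not need \cite{AmcLat}), at the cost of reproducing that lemma's proof; what the paper's approach buys is a five-line argument. Two small remarks: the chaos $S$ is already decoupled (it uses $\edi$), so your Step~1 comment about passing to a decoupled version is unnecessary; and for the tail bound the paper does not use $\|S\|_{2p}\le L_d\|S\|_p$ as you suggest, but rather the scaling $\|(a_\ii)\|_{K,\mathcal{J},tp}^\ast\le t^{\deg\mathcal{J}}\|(a_\ii)\|_{K,\mathcal{J},p}^\ast$ combined with Paley--Zygmund, following \cite{KG,L3}.
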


\paragraph{Remark} For $K=\emptyset$, we define
\begin{align*}
\|(a_\ii)\|_{\emptyset,\mathcal{J},p}^\ast := \sup\Big\{&\Big|\sum_{|\ii| \le n}
a_\ii\prod_{k=1}^m \alpha_{\ii_{J_k}}^{(k)}\Big| \colon (\alpha_{\ii_{J_k}}^{(k)})_{\ii_{J_k}} \in \R^{(I_n^{\#J_k})},
\sum_{\ii_{J_k}}(\alpha_{\ii_{J_k}}^{(k)})^2 \le p,\\
&\forall_{i_{\max J_k}\in I_n}
\sum_{\ii_{\nomax{J_k}}}(\alpha_{\ii_{J_k}}^{(k)})^2 \le 1, \; k
= 1,\ldots,m\Big\},
\end{align*}

It is also easy to see that for a $d$-indexed matrix, $\|(a_\ii)_\ii\|_{I_d,\{\emptyset\},p} = \sqrt{\sum_{\ii} |a_\ii|^2} = \|S\|_2$ and thus does
not depend on $p$. Since it will not be important in the applications, we keep a uniform notation with the subscript $p$.

\paragraph{Examples}
For $d=1$, we have
\begin{align*}
\|(a_i)_{i\le n}\|^\ast_{\emptyset,\{\{1\}\},p}&=\sup\big\{\big|\sum_{i=1}^n
a_i\alpha_i\big|\colon \sum_{i=1}^n \alpha_i^2\le p, |\alpha_i|\le
1,\; i=1,\ldots,n\big\},\\
\|(a_i)_{i\le n}\|^\ast_{\{1\},\emptyset,p} &= \sup\Big\{\sum \langle a_i, \alpha_i\rangle \colon \sum|\alpha_i|^2\le 1\Big\} = \sqrt{\sum_{i=1}^n|a_i|^2},
\end{align*}
whereas for $d=2$, we get
\begin{align*}
\|(a_{ij})_{i,j\le n}\|^\ast_{\emptyset,\{\{1\},\{2\}\},p} & =
\sup\big\{\big|\sum_{i,j=1}^n a_{ij}\alpha_i\beta_j\big|\colon
\sum_{i=1}^n \alpha_i^2 \le p,\sum_{j=1}^n\beta_j^2\le p,\\
&\phantom{xxxxxx} \forall_{i\in I_n}|\alpha_i|\le 1,\forall_{j\in
I_n}|\beta_j|\le 1\big\},
\\
\|(a_{ij})_{i,j\le n}\|^\ast_{\emptyset,\{I_2\},p} & =
\sup\big\{\big|\sum_{i,j=1}^n a_{ij}\alpha_{ij}\big|\colon
\sum_{i,j=1}^n \alpha_{ij}^2 \le p, \forall_{j\in I_n}\sum_{i=1}^n
\alpha_{ij}^2\le 1\big\},\\
\|(a_{ij})_{i,j\le n}\|^\ast_{\{1\},\{\{2\}\},p} & =
\sup\big\{\big|\sum_{i,j=1}^n \langle a_{ij},\alpha_i\rangle\beta_j\big|\colon
\sum_{i=1}^n |\alpha_i|^2 \le 1,\\
&\phantom{xxxxxx}\sum_{j=1}^n\beta_j^2\le p, \forall_{j\in
I_n}|\beta_j|\le 1\big\},\\
\|(a_{ij})_{i,j\le n}\|^\ast_{I_2,\emptyset,p} & =
\sup\big\{\big|\sum_{i,j=1}^n \langle a_{ij},\alpha_{ij}\rangle \big|\colon
\sum_{i,j=1}^n \alpha_{ij}^2 \le 1 \big\} = \sqrt{\sum_{ij} |a_{ij}|^2}.
\end{align*}
\begin{proof}[Proof of Lemma \ref{chaos}] We will combine the classical hypercontractivity property of Rademacher chaoses (see e.g. \cite{dlp2}, p. 110-116) with Lemma 3 in \cite{AmcLat},
which says that for $H = \R$ we have
\begin{align}\label{real_chaos}
\|S\|_p \ge \frac{1}{L_d}\sum_{\mathcal{J}\in\mathcal{P}_{I_d}} \|(a_\ii)\|_{\emptyset,\mathcal{J},p}.
\end{align}
Since $\|(a_\ii)\|_{I_d,\{\emptyset\},p} = \sqrt{\sum_{\ii}|a_\ii|^2} = \|S\|_2$, the inequality
$\|S\|_p \ge L^{-1}\|(a_\ii)\|_{I_d,\{\emptyset\},p}$ is just Jensen's inequality ($p \ge 2$) or the aforesaid
hypercontractivity of Rademacher chaos ($p \in (1,2)$). On the other hand, for $K\neq I_d$ and $\mathcal{J} \in \mathcal{P}_{I_d\backslash K}$, we have
\begin{align*}
\|S\|_p &= \Big(\E_{I_d\backslash K} \E_K \Big|\sum_{\ii_K}\prod_{k\in K}\varepsilon_{i_k}^{(k)}\sum_{\ii_{I_d\backslash K}} a_\ii \prod_{k\notin K} \varepsilon_{i_k}^{(k)}\Big|^p\Big)^{1/p}\\
&\ge \frac{1}{L_{\#K}}\Big(\E_{I_d\backslash K} \Big(\sum_{\ii_K} \Big|\sum_{\ii_{I_d\backslash K}} a_\ii \prod_{k\notin K} \varepsilon_{i_k}^{(k)}\Big|^2\Big)^{p/2}\Big)^{1/p}\\
& = \frac{1}{L_{\#K}}\Big(\E_{I_d\backslash K} \sup_{\sum_{\ii_K}|\alpha_{\ii_K}^{(0)}|^2\le 1}\Big|\sum_{\ii_{I_d\backslash K}} \sum_{\ii_K}\langle \alpha_{\ii_K}^{(0)},
a_\ii\rangle\prod_{k\notin K} \varepsilon_{i_k}^{(k)}\Big|^{p}\Big)^{1/p}\\
&\ge \frac{1}{L_{\#K}}\Big(\sup_{\sum_{\ii_K}|\alpha_{\ii_K}^{(0)}|^2\le 1}\E_{I_d\backslash K} \Big|\sum_{\ii_{I_d\backslash K}} \sum_{\ii_K}\langle \alpha_{\ii_K}^{(0)},
a_\ii\rangle\prod_{k\notin K} \varepsilon_{i_k}^{(k)}\Big|^{p}\Big)^{1/p}\\
&\ge \frac{1}{L_{\#K}L_{d-\#K}}\sup_{\sum_{\ii_K}|\alpha_{\ii_K}^{(0)}|^2\le 1}\Big\|(\sum_{\ii_K}\langle \alpha^{(0)}_{\ii_K},a_\ii\rangle)_{\ii_{I_d\backslash K}}\Big\|_{\emptyset,\mathcal{J},p}\\
& = \frac{1}{L_{\#K}L_{d-\#K}}\|(a_\ii)\|_{K,\mathcal{J},p},
\end{align*}
where the first inequality follows from hypercontractivity applied conditionally on $(\varepsilon^{(k)}_i)_{k\notin K,i\in I_n}$, the second is Jensen's inequality and the
third is (\ref{real_chaos}) applied for a chaos of order $d-\#K$.

The tail estimate follows from moment estimates by the Paley-Zygmund inequality and the inequality
$\|(a_\ii)\|_{K,\mathcal{J},tp} \le t^{\deg\mathcal{J}} \|(a_\ii)\|_{K,\mathcal{J},p}$ for $t \ge 1$ just like in \cite{KG, L3}.
\end{proof}

\begin{proof}[Proof of necessity]

First we will prove the integrability condition (\ref{nasc_integrability}).
Let us notice that by classical hypercontractive estimates for Rademacher chaoses and the Paley-Zygmund inequality (or by Lemma \ref{chaos}), we have
\begin{displaymath}
\p_\varepsilon\Big(\Big|\sum_{|\ii|\le 2^n} \edi h(\xdi)\Big| \ge c_d \sqrt{\sum_{|\ii|\le 2^n} h(\xdi)^2}\Big) \ge c_d
\end{displaymath}
for some constant $c_d > 0$. By the Fubini theorem it gives
\begin{displaymath}
\p_\varepsilon\Big(\Big|\sum_{|\ii|\le 2^n} \edi h(\xdi)\Big| \ge D 2^{nd/2}\log^{d/2}n\Big) \ge c_d\p\Big(\sum_{|\ii|\le 2^n} h(\xdi)^2 \ge D^2 c_d^{-2}2^{nd}\log^d n\Big),
\end{displaymath}
which together with Lemma \ref{GineZhang} yields
\begin{displaymath}
\sum_n \p\Big(\sum_{|\ii|\le 2^n} h(\xdi)^2 \ge D^2 c_d^{-2}2^{nd}\log^d n\Big) < \infty.
\end{displaymath}
The integrability condition (\ref{nasc_integrability}) follows now from Corollary \ref{convergent_series}.

Before we proceed to the proof of (\ref{nasc}), let us notice that (\ref{nasc_integrability}) and Lemma \ref{nowy_lemat_nie_mam_sily_wymyslac_nazwy} imply that
\begin{align}\label{growth_condition}
\E (|h|^2\wedge u) \le K(\loglog u)^d
\end{align}
 for $n$ large enough. The proof of (\ref{nasc}) can be now obtained by adapting
the argument for the real valued case.

Since $\lim_{n \to \infty}
\sum_{k=n}^{2n}\frac{1}{k} = \log 2$, (\ref{first_series}) implies that there exists $N_0$, such
that for all $N > N_0$, there exists $N \le n \le 2N$, satisfying
\begin{equation}\label{co_drugie_n}
\p\Big(\Big|\sum_{|\ii|\le 2^n}\edi h(\xdi)\Big| > L_dC2^{nd/2}\log^{d/2} n\Big) <
\frac{1}{10n}.
\end{equation}

Let us thus fix $N > N_0$ and consider $n$ as above. Let
$K \subseteq I_d$, $\mathcal{J} = \{J_1,\ldots,J_k\} \in \mathcal{P}_{I_d\backslash K}$. Let us
also fix functions $g\colon \Sigma^{\#K} \to H$, $f_j \colon \Sigma^{\#J_j} \to \R$,
$j=1,\ldots,k$, such that
\begin{align*}
\|g(X_k)\|_2 &\le 1, \|g(X_K)\|_\infty \le 2^{n/(2k+3)},\\
\|f_j(X_{J_j})\|_2 &\le 1, \|f_j(X_{J_j})\|_\infty \le 2^{n/(2k+3)}.
\end{align*}

The Chebyshev inequality gives
\begin{equation}\label{Chebyshev}
\p(\sum_{|\ii_{J_j}|\le 2^n} f_j(\xdij)^2\log n \le 10\cdot 2^{d}
2^{\#J_j n}\log n) \ge 1 - \frac{1}{10\cdot 2^d}.
\end{equation}
Similarly, if $K \neq \emptyset$,
\begin{equation}\label{Chebyshev1}
\p(\sum_{|\ii_{K}|\le 2^n} |g(\xdik)|^2\le 10\cdot 2^{d}
2^{\#K n}) \ge 1 - \frac{1}{10\cdot 2^d}
\end{equation}
and for $K=\emptyset$, $|g|\le 1$ (recall that
for $K = \emptyset$, the function $g$ is constant).

Moreover for $j = 1,\ldots,k$ and sufficiently large $N$,
\begin{align*}
\sum_{|\ii_{\nomax{J_j}}| \le 2^n} \frac{1}{2^{n\#J_j}}
f_j(\xdij)^2 \cdot \log n &\le \frac{2^{n\#
\nomax{J_j}}2^{2n/(2k+3)} \log n}{2^{n \# J_j}}
\\
&\le \frac{2^{2n/(2k+3)}\log n}{2^n} \le 1.
\end{align*}

Without loss of generality we may assume that the sequences
$(X_i^{(j)})_{i,j}$ and $(\varepsilon_i^{(j)})_{i,j}$ are defined
as coordinates of a product probability space. If for each
$j=1,\ldots,k$ we denote the set from (\ref{Chebyshev}) by $A_k$, and the set from (\ref{Chebyshev1}) by $A_0$,
we have $\p(\bigcap_{j=0}^k A_k) \ge 0.9$. Recall now Lemma
\ref{chaos}. On $\bigcap_{j=0}^k A_k$ we can estimate the
$\|\cdot\|^\ast_{K,\mathcal{J},\log n}$ norms of the matrix
$(h(\xdi))_{|\ii|\le 2^n}$ by using the test sequences
\begin{displaymath}
\alpha_{\ii_{J_j}}^{(j)} = \frac{f_j(\xdij)\sqrt{\log
n}}{10^{1/2}2^{d/2}2^{n\#J_j/2}}
\end{displaymath}
for $j = 1,\ldots, k$ and
\begin{displaymath}
\alpha_{\ii_{K}}^{(0)} = \frac{g(\xdik)}{10^{1/2}2^{d/2}2^{n\#K/2}}.
\end{displaymath}
Therefore with probability at least 0.9 we have

\begin{align}\label{oszacowanie_normy}
&\|(h(\xdi))_{|\ii|\le 2^n}\|^\ast_{K,\mathcal{J},\log n} \\
&\ge
\frac{(\log n)^{k/2}}{2^{d(k+1)/2}10^{(k+1)/2}2^{(\#K+\sum_j\# J_j)n/2}}
|\sum_{|\ii|\le 2^n}\langle g(\xdik), h(\xdi)\rangle\prod_{j=1}^k f_j(\xdij)| \nonumber\\
&= \frac{(\log n)^{k/2}}{2^{d(k+1)/2}10^{(k+1)/2}2^{dn/2}}|\sum_{|\ii|\le
2^n} \langle g(\xdik), h(\xdi)\rangle\prod_{j=1}^k f_j(\xdij)|.\nonumber
\end{align}

Our aim is now to further bound from below the right hand side of
the above inequality, to have, via Lemma \ref{chaos}, control from
below on the conditional tail probability of $\sum_{|\ii|\le
2^n}\edi h(\xdi)$, given the sample $(X_{i}^{(j)})$.

From now on let us assume that
\begin{equation}
|\E \langle g(X_K),h(X)\rangle\prod_{j=1}^kf_j(X_{J_j})| > 1. \label{assumption}
\end{equation}
The Markov inequality, (\ref{growth_condition}) and Lemma \ref{calkowalnosc} give
\begin{align}\label{Markow}
\p\big(|&\sum_{|\ii|\le 2^n} \langle g(\xdk), h(\xdik)\rangle\ind{|h(\xdi)| >
2^n}\prod_{j=1}^kf_j(\xdij)| \ge \frac{2^{nd}|\E
\langle g,h\rangle\prod_{j=1}^kf_j|}{4}\big) \nonumber\\
&\le 4\frac{2^{nd}(\|g\|_\infty\prod_{j=1}^{k}\|f_j\|_\infty)\cdot
\E|h|\ind{|h|> 2^n}}{2^{nd}|\E\langle g, h\rangle\prod_{j=1}^kf_j|} \le
42^{n(k+1)/(2k+3)}\E|h|\ind{|h|> 2^n} \nonumber\\
&\le 4K \frac{(\log n)^{d}}{2^{\frac{n(k+2)}{2k+3}}}.
\end{align}
Let now $h_n = h\ind{|h|\le 2^n}$. By the Chebyshev inequality,
Lemma \ref{variance} and (\ref{growth_condition})
\begin{align}\label{czebyszew}
\p\Bigg(|\sum_{|\ii|\le 2^n} \langle g(\xdik), &h_n(\xdi)\rangle\prod_{j=1}^kf_j(\xdij) -
2^{nd}\E \langle g,h_n\rangle
\prod_{j=1}^kf_j| \nonumber\\
&\ge \frac{2^{nd}}{5}|\E \langle g,h_n\rangle\prod_{j=1}^kf_j|\Bigg) \nonumber\\
&\le 25\frac{\Var(\sum_{|\ii|\le 2^n}\langle g(\xdik),h_n(\xdi)\rangle\prod_{j=1}^kf_j(\xdij))}{2^{2nd}|\E \langle g,h_n\rangle\prod_{j=1}^kf_j|^2}\nonumber\\
&\le 25\frac{(2^d - 1)2^{n(2d - 1)}}{2^{2nd}|\E \langle g,h_n\rangle\prod_{j=1}^kf_j|^2}\E|\langle g,h_n\rangle\prod_{j=1}^kf_j|^2 \nonumber \\
&\le 25(2^d - 1)\frac{2^{2n(k+1)/(2k+3)}\E |h_n|^2}{2^n|\E
\langle g,h_n\rangle\prod_{j=1}^kf_j|^2}\nonumber\\
&\le 25K(2^d - 1)\frac{\log^{d} n}{2^{n/(2k+3)}|\E
\langle g,h_n\rangle\prod_{j=1}^kf_j|^2}.
\end{align}

Let us also notice that for large $n$, by
(\ref{growth_condition}), Lemma \ref{calkowalnosc} and
(\ref{assumption})
\begin{align}\label{calka_obciecia}
|\E \langle &g,h_n\rangle\prod_{j=1}^k f_j| \ge |\E \langle g,h \rangle\prod_{j=1}^k f_j| - |\E
\langle g,h\rangle\ind{|h| >
2^n}\prod_{j=1}^{k}f_j| \nonumber\\
&\ge |\E \langle g,h\rangle \prod_{j=1}^k f_j| - 2^{n(k+1)/(2k+3)}K\frac{(\log
n)^{d}}{2^n} \ge \frac{5}{8}|\E \langle g,h \rangle\prod_{j=1}^k f_j| \ge
\frac{5}{8}.
\end{align}

Inequalities (\ref{Markow}), (\ref{czebyszew}) and
(\ref{calka_obciecia}) imply, that for large $n$ with probability
at least $0.9$ we have
\begin{align*}
&|\sum_{|\ii|\le 2^n}\langle g(\xdik), h(\xdi)\rangle\prod_{j=1}^kf_j(\xdij)| \nonumber\\
&\ge |\sum_{|\ii|\le 2^n} \langle g(\xdik),h_n(\xdi)\rangle\prod_{j=1}^k f_j(\xdij)| \\
&\phantom{xx}-
|\sum_{|\ii|\le 2^n}
\langle g(\xdik),h(\xdi)\rangle\ind{|h(\xdi)| > 2^n}\prod_{j=1}^kf_j(\xdij)| \nonumber\\
&\ge 2^{nd}\big(\frac{4}{5}|\E \langle g,h_n\rangle\prod_{j=1}^nf_j| -
\frac{1}{4}|\E
\langle g,h\rangle\prod_{j=1}^kf_j|\big) \nonumber\\
&\ge 2^{nd}\big(\frac{4}{5}\cdot\frac{5}{8}|\E \langle g,h\rangle\prod_{j=1}^nf_j|
- \frac{1}{4}|\E \langle g,h\rangle\prod_{j=1}^kf_j|\big)\ge\frac{2^{nd}}{4}|\E\langle g,
h\rangle\prod_{j=1}^kf_j|.
\end{align*}

Together with (\ref{oszacowanie_normy}) this yields that for large
$n$ with probability at least $0.8$,
\begin{displaymath}
\|(h_\ii)_{|\ii|\le 2^n}\|^\ast_{K,\mathcal{J},\log n} \ge
\frac{2^{nd/2}\log^{k/2}n}{4 \cdot 2^{d(k+1)/2}10^{(k+1)/2}}|\E
\langle g,h\rangle\prod_{j=1}^kf_j|.
\end{displaymath}

Thus, by Lemma \ref{chaos}, for large $n$
\begin{displaymath}
\p\big(\big|\sum_{|\ii|\le 2^n} \edi h(\xdi)\big| \ge c_d
\frac{2^{nd/2}\log^{k/2}n}{4 \cdot 2^{d(k+1)/2}10^{(k+1)/2}}|\E
\langle g,h\rangle\prod_{j=1}^kf_j|\big) \ge \frac{8}{10n},
\end{displaymath}
which together with (\ref{co_drugie_n}) gives
\begin{displaymath}
|\E \langle g,h\rangle\prod_{j=1}^kf_j| \le L_d C
\frac{4\cdot2^{d(k+1)/2}10^{(k+1)/2}}{c_d}\log^{(d-k)/2}n.
\end{displaymath}
In particular for sufficiently large $N$, for arbitrary functions
$g\colon \Sigma^{\#K} \to H$, $f_j \colon \Sigma^{\#J_j} \to \R$, $j=1,\ldots,k$, such that
\begin{align*}
\|g(\xdk)\|_\infty, \|f_j(X_{J_j})\|_2 &\le 1, \\
\|g(\xdk)\|_2, \|f_j(X_{J_j})\|_\infty &\le 2^{N/(2k+3)}
\end{align*}
we have
\begin{displaymath}
|\E \langle g,h\rangle\prod_{j=1}^kf_j| \le L_d C
\frac{4\cdot2^{d(k+1)/2}10^{(k+1)/2}}{c_d}\log^{(d-k)/2}n \le \tilde{L}_d
C \log^{(d-k)/2}N,
\end{displaymath}
which clearly implies (\ref{nasc}).
\end{proof}

%-------------------------------------------------------------------------------------------
\section{Sufficiency}

\begin{lemma} \label{szereg} Let $H = H(X_1,\ldots,X_d)$ be a nonnegative random variable, such that $\E H^2 < \infty$.
Then for $I \subseteq I_d$, $I\neq \emptyset$,$I_d$,
\begin{displaymath}
\sum_{l=0}^{\infty}\sum_{n=1}^\infty 2^{l+\#I^{c}n}
\p_{I^c}(\E_{I}H^2 \geq
2^{2l+\#I^{c}n})<\infty.
\end{displaymath}
\end{lemma}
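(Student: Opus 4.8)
The plan is to reduce the double series to a single series and then to the finite quantity $\E H^2$. Set $m := \#I^c$; since $\emptyset \neq I \neq I_d$ we have $1 \le m \le d-1$. Put $W := \E_I H^2$, which is a nonnegative measurable function of $X_{I^c}$ satisfying $\E_{I^c} W = \E H^2 < \infty$ by Tonelli's theorem. In this notation the series in question is
\[
S := \sum_{l=0}^{\infty}\sum_{n=1}^{\infty} 2^{l+mn}\,\p_{I^c}\big(W \ge 2^{2l+mn}\big).
\]

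The first step is to reindex by $k := 2l+mn$. Since $l\ge 0$ and $n\ge 1$, the level $k$ runs over integers $\ge m \ge 1$, and each pair $(l,n)$ contributing to level $k$ satisfies $l+mn = (k+mn)/2 \le k$ because $mn \le k$. Moreover, distinct admissible pairs $(l,n)$ at level $k$ yield distinct integer values of $l+mn$, all $\le k$; hence the weights $2^{l+mn}$ occurring at level $k$ sum to at most $\sum_{j\ge 0}2^{k-j} = 2^{k+1}$. Grouping the double sum by $k$ therefore gives
\[
S \;\le\; \sum_{k\ge 1} 2^{k+1}\,\p_{I^c}\big(W\ge 2^k\big) \;=\; 2\sum_{k\ge 1} 2^{k}\,\p_{I^c}\big(W\ge 2^k\big).
\]

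The last step is a comparison with a first moment: for $k\ge 1$, monotonicity of $t\mapsto \p_{I^c}(W\ge t)$ gives $2^{k}\,\p_{I^c}(W\ge 2^k) = 2\cdot 2^{k-1}\p_{I^c}(W\ge 2^k) \le 2\int_{2^{k-1}}^{2^k}\p_{I^c}(W\ge t)\,dt$, and summing over $k\ge 1$ yields $\sum_{k\ge 1}2^{k}\p_{I^c}(W\ge 2^k) \le 2\int_0^\infty \p_{I^c}(W\ge t)\,dt = 2\,\E_{I^c}W = 2\,\E H^2$. Combining the two displays gives $S \le 4\,\E H^2 < \infty$, which is the assertion. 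The computation is elementary; the one point that requires care is the reindexing step — a crude count of the pairs $(l,n)$ with $2l+mn=k$ would introduce a spurious factor $k$ and leave $\sum_k k\,2^k\p_{I^c}(W\ge 2^k)$, which need not converge under the hypothesis $\E H^2<\infty$ alone. The geometric decay of the weights $2^{l+mn}$ at a fixed level $k$ is exactly what keeps the bound linear in $\E H^2$.
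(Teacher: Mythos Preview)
Your argument is correct and yields the same bound $S\le 4\,\E H^2$ as the paper, but the organization differs. The paper keeps $l$ fixed and sums the inner series over $n$ directly: writing $\p_{I^c}(\E_I H^2\ge 2^{2l+\#I^c n})=\E_{I^c}\ind{W\ge 2^{2l+\#I^c n}}$ and using that $\sum_n 2^{\#I^c n}\ind{W\ge 2^{2l+\#I^c n}}\le 2\cdot 2^{-2l}W$ (a geometric sum cut off at the largest admissible $n$), one gets $\sum_n 2^{l+\#I^c n}\p_{I^c}(\cdots)\le 2^{1-l}\E H^2$, and summing over $l$ gives $4\,\E H^2$ in two lines. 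Your route instead reindexes by the level $k=2l+mn$, observes that the surviving weights $2^{l+mn}$ at level $k$ are distinct powers of $2$ bounded by $2^k$ and hence sum to at most $2^{k+1}$, and then invokes the tail-sum identity $\sum_k 2^k\p(W\ge 2^k)\le 2\E W$. Both approaches exploit the same geometric decay; the paper's ordering is a touch shorter, while your level-set grouping makes explicit the point you flag at the end---that a naive count of pairs at level $k$ would introduce a spurious factor $k$ and fail.
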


\begin{proof}
\begin{align*}
\sum_l\sum_n 2^{l+\#I^{c}n} \p_{I^c}(\E_{I}H^2 \geq 2^{2l+\#I^{c}n}) &=
\sum_l 2^l \E_{I^c} \Big[\sum_n 2^{\#I^c n}\ind{\E_I|H|^2\ge 2^{2l + \#I^cn}}\Big]\\
&\le \sum_l 2^{1-l}\E_{I^c}\E_I H^2 \le 4\E H^2 < \infty.
\end{align*}
\end{proof}

\begin{lemma} \label{upper_bounds_only} Let $\mathbf{X} = (X_1,\ldots,X_d)$ and $\mathbf{\tilde{X}}(I) =
((X_i)_{i\in I}, (X^{(1)}_i)_{i \in I^c})$. Denote $H = |h|/(\llog |h|)^{d/2}$. If $\E |H|^2 < \infty$ and
$h_n = h\Ind{A_n}$, where
\begin{align*}
A_n \subseteq \big\{&x \colon  |h(x)|^2 \le 2^{nd}\log^d n\;
\textrm{and}\;\forall_{I\neq \emptyset,I_d}\; \E_I H^2 \le 2^{\#I^cn}\big\},
\end{align*}
then for $I \subseteq I_d$, $I \neq \emptyset$, we have
\begin{displaymath}
\sum_n \frac{2^{-n\#I}}{\log^{2d} n}\E[
|h_n(X)|^2|h_n(\tilde{X}(I))|^2] < \infty.
\end{displaymath}
\end{lemma}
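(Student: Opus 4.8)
The plan is to convert the cross term into a conditional second moment of $|h_n|^2$ and then exploit the two defining restrictions of $A_n$ separately. Throughout, fix $I$ with $\emptyset\neq I\subseteq I_d$, and note that on $A_n$ one has $|h|^2\le 2^{nd}\log^d n$, so that $h_n$ is bounded ($|h_n|\le 2^{nd/2}\log^{d/2}n$) and all moments below are finite.

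First I would decouple. The three families $(X_i)_{i\in I}$, $(X_i)_{i\in I^c}$ and $(X^{(1)}_i)_{i\in I^c}$ are mutually independent and the last two are equidistributed, so conditionally on $(X_i)_{i\in I}$ the variables $h_n(\mathbf{X})$ and $h_n(\mathbf{\tilde{X}}(I))$ are independent with the same law; since $|h_n|^2$ is bounded this yields
\[
\E\big[|h_n(\mathbf{X})|^2\,|h_n(\mathbf{\tilde{X}}(I))|^2\big]=\E_I\big[\big(\E_{I^c}|h_n(\mathbf{X})|^2\big)^2\big]
\]
(for $I=I_d$ both sides are just $\E|h_n|^4$).

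Next I would record two estimates valid on $A_n$. For $n$ past a threshold depending only on $d$, the bound $|h|^2\le 2^{nd}\log^d n$ forces $\llog|h|\le 2\log n$, so pointwise $|h_n|^2=H^2(\llog|h|)^d\Ind{A_n}\le (2\log n)^dH^2$; this I will use to get $\E|h_n|^2=\E[|h|^2\Ind{A_n}]\le(2\log n)^d\E[H^2\Ind{A_n}]$. Second, if $\emptyset\neq I\subsetneq I_d$, then membership in $A_n$ also forces $\E_{I^c}H^2\le 2^{\#I n}$ (this being the defining inequality of $A_n$ for the proper nonempty subset $I^c$, a quantity depending on $X_I$ only); hence $\E_{I^c}|h_n|^2$ vanishes off the set $\{\E_{I^c}H^2\le 2^{\#I n}\}$, and there $\E_{I^c}|h_n|^2\le(2\log n)^d\E_{I^c}H^2\le(2\log n)^d2^{\#I n}$. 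Plugging this deterministic bound into one of the two factors of $\E_{I^c}|h_n|^2$ in the decoupling identity (and, in the remaining case $I=I_d$, using $|h_n|^2\le 2^{nd}\log^d n$ instead) gives, in all cases,
\[
\E\big[|h_n(\mathbf{X})|^2\,|h_n(\mathbf{\tilde{X}}(I))|^2\big]\le L_d\,2^{\#I n}(\log n)^d\,\E|h_n|^2\le L_d\,2^{\#I n}(\log n)^{2d}\,\E\big[H^2\Ind{A_n}\big].
\]

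Finally I would sum. Multiplication by $2^{-n\#I}/\log^{2d}n$ cancels all the powers, so $\sum_n 2^{-n\#I}\log^{-2d}n\,\E[|h_n(\mathbf{X})|^2|h_n(\mathbf{\tilde{X}}(I))|^2]\le L_d\sum_n\E[H^2\Ind{A_n}]\le L_d\,\E|H|^2<\infty$, where the penultimate step uses that the sets $A_n$ are pairwise disjoint (so $\sum_n\Ind{A_n}\le 1$) and the last uses the standing hypothesis; the finitely many initial terms (below the threshold) are individually finite because $|h_n|\le 2^{nd/2}\log^{d/2}n$ there. I do not expect a serious obstacle beyond keeping the exponents straight; the one point that matters is that the restrictions defining $A_n$ are phrased in terms of $\E_JH^2$, not $\E_J|h|^2$, so one must spend the $\llog$-truncation to pass between $|h_n|^2$ and $(\log n)^dH^2$ — and it is precisely this, rather than any boundedness of $h$, that makes the weak integrability $\E|H|^2<\infty$ enough to close the argument once disjointness of the $A_n$ is in hand.
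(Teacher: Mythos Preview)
Your reduction is clean and the estimates
\[
\E\big[|h_n(\mathbf{X})|^2|h_n(\mathbf{\tilde{X}}(I))|^2\big]=\E_I\big[(\E_{I^c}|h_n|^2)^2\big]\le L_d\,2^{\#I n}(\log n)^{2d}\,\E[H^2\Ind{A_n}]
\]
are correct. The gap is in the last step: you invoke pairwise disjointness of the sets $A_n$ to get $\sum_n\E[H^2\Ind{A_n}]\le\E H^2$, but no such disjointness is assumed in the lemma, and in the paper's application (Step~3 of the sufficiency proof) the $A_n$ are certainly not disjoint. Indeed the superset $\{|h|^2\le 2^{nd}\log^d n,\ \forall_{J\neq\emptyset,I_d}\E_JH^2\le 2^{\#J^c n}\}$ is increasing in $n$, and the extra factor $\bigcup_JB_n^J$ in that step does not repair this: for any fixed level set $C_k^J$ one has $k\in K(J,n)$ for all large $n$, so a given $x$ typically lies in $A_n$ for infinitely many $n$. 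With the $A_n$ monotone, $\sum_n\E[H^2\Ind{A_n}]$ diverges whenever $\E H^2>0$, so your argument stops exactly here.

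What the paper does instead is swap $\sum_n$ and $\E$ \emph{before} spending both inequalities on $A_n$, and then sum the resulting series in $n$ pointwise. For $I=I_d$ this is immediate: $\sum_n 2^{-nd}\log^{-2d}n\,\ind{|h|^2\le 2^{nd}\log^d n}$ is a geometric tail, bounded by $L_d|h|^{-2}(\llog|h|)^{-d}$, so one lands on $\E|h|^2/(\llog|h|)^d$. For $\emptyset\neq I\subsetneq I_d$ the paper symmetrises via $\ind{|h|\le|\tilde h|}$, passes from $2^{2nd}$ to $|\tilde h|^2$ inside the truncation, and again sums a geometric tail in $n$ governed by the smallest $n$ for which $\E_{I^c}|h|^2\ind{|h|^2\le|\tilde h|^2}\le L_d2^{\#I n}\log^d n$; the resulting bound telescopes because the conditional expectation in the denominator cancels against the same quantity in the numerator after taking $\E_{I^c}$. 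In short, the missing idea is that the overlap of the $A_n$ is harmless provided one exploits the geometric decay of $2^{-n\#I}$ against the threshold implicit in the indicator, rather than trying to make the indicators sum to $1$.
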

\begin{proof}
\begin{itemize}
\item[a)] $I = I_d$
\begin{align*}
\sum_n \frac{\E |h_n|^4}{2^{nd}\log^{2d}n} &\le
\E |h|^4 \sum_n \frac{1}{2^{nd}\log^{2d}n}\ind{|h|^2 \le 2^{nd}\log^d n} \\
&\le L_d \E |h|^4\frac{1}{|h|^2(\llog |h|)^d} < \infty.
\end{align*}

\item[b)] $I \neq I_d,\emptyset$. Let us denote by $\E_I,
\E_{I^c}, \tilde{\E}_{I^c}$ respectively, the expectation with
respect to $(X_i)_{i\in I}$, $(X_i)_{i\in I^c}$ and
$(X_i^{(1)})_{i \in I^c}$. Let also $\tilde{h}$, $\tilde{h}_n$
stand for $h(\mathbf{\tilde{X}}(I))$,
$h_n(\mathbf{\tilde{X}}(I))$ respectively. Then

\begin{align*}
&\sum_n  \frac{\E (|h_n|^2\cdot |\tilde{h}_n|^2)}{2^{n\#I}\log^{2d}n}
\le 2\sum_n \frac{\E (|h_n|^2\cdot |\tilde{h}_n|^2\ind{|h| \le |\tilde{h}|})}{2^{n\#I}\log^{2d}n}\\
&\le 2\E \Big(|h|^2|\tilde{h}|^2\ind{|h|\le|\tilde{h}|}\\
&\times\sum_n
\frac{1}{2^{n\#I}\log^{2d}n }
\ind{\E_{I^c}|h|^2\ind{|h|^2\le 2^{2nd}}\le L_d 2^{\#In}\log^d n, \; |\tilde{h}|^2 \le 2^{2nd}}\Big)\\
&\le 2\E \Big(|h|^2|\tilde{h}|^2\ind{|h|\le|\tilde{h}|}\\
&\times\sum_n
\frac{1}{2^{n\#I}\log^{2d}n}
\ind{\E_{I^c}|h|^2\ind{|h|^2\le |\tilde{h}|^2}\le L_d 2^{\#In}\log^d n, \; |\tilde{h}|^2 \le 2^{2nd}}\Big)\\
&\le \tilde{L}_d\E \Big(|h|^2|\tilde{h}|^2\ind{|h|\le|\tilde{h}|}\frac{1}{(\E_{I^c}|h|^2\ind{|h|^2\le |\tilde{h}|^2})(\llog |\tilde{h}|)^d}\Big)\\
&= \tilde{L}_d\E_I \tilde{\E}_{I^c} \Big[|\tilde{h}|^2\E_{I^c}\Big(|h|^2\ind{|h|\le |\tilde{h}|}\frac{1}{(\E_{I^c}|h|^2\ind{|h|^2\le |\tilde{h}|^2})(\llog |\tilde{h}|)^d}\Big)\Big]\\
&\le \tilde{L}_d\E\frac{|\tilde{h}|^2}{(\llog |\tilde{h}|)^d} < \infty,
\end{align*}
where to obtain the second inequality, we used the fact that
\begin{align*}
\E_{I^c}& |h|^2\ind{|h|^2\le 2^{2nd}, \E_{I^c} H^2 \le 2^{\#In}} \\
&\le \E_{I^c} \frac{|h|^2}{(\llog |h|)^{d}}(\loglog 2^{nd})^d\ind{\E_{I^c} H^2 \le 2^{\#I n}}\\
&\le L_d \E_{I^c} H^2 \ind{\E_{I^c} H^2 \le 2^{\#In}} \log^{d} n \le L_d 2^{\#In}\log^d n.
\end{align*}
\end{itemize}
\end{proof}

\begin{lemma}\label{l4sequence3} Consider a square integrable, nonnegative random variable $Y$. Let $Y_n = Y\Ind{B_n}$, with
$B_n = \bigcup_{k\in K(n)}C_{k}$, where $C_0, C_1,C_2,\ldots$ are pairwise disjoint subsets of $\Omega$ and
\begin{align*}
K(n) = \{k \le n\colon\E (Y^2\Ind{C_k}) \le 2^{k-n}\}.
\end{align*}
Then
\begin{align*}
\sum_{n}
(\E Y_n^2)^2 < \infty
\end{align*}
\end{lemma}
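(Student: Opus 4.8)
The plan is to estimate $\E Y_n^2$ directly from the definition of $Y_n$ and then sum over $n$. Since $B_n = \bigcup_{k \in K(n)} C_k$ with the $C_k$ pairwise disjoint, we have $\E Y_n^2 = \sum_{k \in K(n)} \E(Y^2 \Ind{C_k})$, and the defining condition $k \in K(n)$ gives $\E(Y^2 \Ind{C_k}) \le 2^{k-n}$ for each such $k$. Therefore $\E Y_n^2 \le \sum_{k \le n} \min\big(\E(Y^2\Ind{C_k}), 2^{k-n}\big)$; it is convenient to work with $b_k := \E(Y^2 \Ind{C_k})$, which satisfies $\sum_k b_k \le \E Y^2 < \infty$, and the bound becomes $\E Y_n^2 \le \sum_{k \le n} \min(b_k, 2^{k-n})$.

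The next step is to show $\sum_n (\sum_{k\le n} \min(b_k, 2^{k-n}))^2 < \infty$. First I would bound $(\sum_{k\le n} \min(b_k, 2^{k-n}))^2 \le (\sum_{k\le n}\min(b_k,2^{k-n})) \cdot (\sum_{k\le n} 2^{k-n}) \le 2 \sum_{k\le n} \min(b_k, 2^{k-n})$, using $\sum_{k \le n} 2^{k-n} \le 2$. Then summing in $n$ and interchanging the order of summation, $\sum_n \E Y_n^2{}^2 \le 2\sum_n \sum_{k\le n}\min(b_k, 2^{k-n}) = 2\sum_k \sum_{n\ge k} \min(b_k, 2^{k-n}) \le 2\sum_k b_k \cdot \#\{n : 2^{k-n}\ge \text{nothing}\}$—here I need to be a little more careful: $\sum_{n\ge k}\min(b_k,2^{k-n})$ splits as the terms with $2^{k-n}\ge b_k$ (finitely many, contributing at most $b_k$ times the count) plus the tail $\sum 2^{k-n}$. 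Writing $m_k$ for the largest $n$ with $2^{k-n}\ge b_k$ (so $m_k - k \le \log_2(1/b_k) \vee 0$), the first part contributes $\le b_k(m_k - k + 1)$ and the second $\le 2 b_k$, so $\sum_{n\ge k}\min(b_k,2^{k-n}) \le b_k(m_k - k + 3) \le b_k(\log_2(1/b_k) + 4)$.

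This reduces everything to showing $\sum_k b_k \log(1/b_k) < \infty$ (up to constants and the trivial terms), given only $\sum_k b_k < \infty$ with $b_k \ge 0$. This is \emph{not} true in general for arbitrary summable sequences, so the actual proof must exploit the index constraint $k \le n$ more efficiently, or I have been too lossy. The cleaner route, which I would use instead, is: $\E Y_n^2 \le \sum_{k\le n} 2^{k-n} \wedge b_k$, and simply bound $\sum_n (\E Y_n^2)^2 \le \sum_n \E Y_n^2 \cdot \sup_m \E Y_m^2$. Since $\E Y_n^2 \le \sum_{k \le n} 2^{k-n} \le 2$ we get a uniform bound $\sup_m \E Y_m^2 \le 2$, and then it suffices that $\sum_n \E Y_n^2 < \infty$. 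But $\sum_n \E Y_n^2 \le \sum_n \sum_{k \le n} \min(b_k, 2^{k-n})$, and the same interchange gives $\sum_k \sum_{n \ge k} \min(b_k, 2^{k-n})$, which again needs the logarithmic bound. The honest resolution: split $\sum_{n\ge k}\min(b_k, 2^{k-n}) \le \sum_{n : 2^{k-n} \le b_k} 2^{k-n} + b_k\cdot\#\{n\ge k : 2^{k-n} > b_k\} \le 2\sqrt{b_k} + b_k\log_2(1/b_k)$ — wait, $\sum_{2^{k-n}\le b_k} 2^{k-n}$ is a geometric tail summing to at most $2 b_k$, not $2\sqrt{b_k}$. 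So we are genuinely left with $\sum_k b_k(\log_2(1/b_k)+O(1))$.

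The main obstacle is therefore handling the logarithmic factor: one must avoid the crude $(\sum \min(b_k,2^{k-n}))^2$ expansion. The right idea is to keep the square: $(\E Y_n^2)^2 \le \big(\sum_{k\le n}\min(b_k, 2^{k-n})\big)^2$, and expand the square as a double sum over $k, k' \le n$ of $\min(b_k,2^{k-n})\min(b_{k'},2^{k'-n})$; by symmetry assume $k \le k'$, bound the $k'$ factor by $2^{k'-n}$ and the $k$ factor by $b_k$, sum over $k' \in [k,n]$ to get $\le 2\cdot 2^{k-n}\cdot\ldots$ — actually summing $2^{k'-n}$ over $k' \le n$ gives $\le 2$, so each diagonal-or-above term is $\le 2 b_k \min(b_k, 2^{k-n})$, and then $\sum_n \sum_{k\le n} 2 b_k \min(b_k,2^{k-n}) = 2\sum_k b_k\sum_{n\ge k}\min(b_k,2^{k-n}) \le 2\sum_k b_k \cdot 2 b_k = 4\sum_k b_k^2 \le 4(\sum_k b_k)^2 < \infty$, since $\sum_{n\ge k}\min(b_k,2^{k-n}) \le \sum_{n\ge k} 2^{k-n}\cdot\mathbf{1} $ capped — more precisely $\sum_{n \ge k}\min(b_k, 2^{k-n}) \le \sum_{m\ge 0}\min(b_k, 2^{-m}) \le 2b_k$ because once $2^{-m} < b_k$ the tail is geometric with sum $< 2b_k$, and the finitely many terms with $2^{-m}\ge b_k$ each contribute $2^{-m}$, total $< 2$, times... no: those contribute $\le \sum_{2^{-m}\ge b_k} 2^{-m}$? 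No — $\min(b_k, 2^{-m}) = b_k$ when $2^{-m} \ge b_k$, so that part is $b_k \cdot \#\{m: 2^{-m}\ge b_k\}$. I keep circling the same obstruction. The genuine fix, which I will commit to in the writeup: bound $\big(\sum_{k\le n}\min(b_k,2^{k-n})\big)^2 \le \big(\sum_{k\le n} 2^{(k-n)/2}\big)\big(\sum_{k\le n} 2^{(n-k)/2}\min(b_k,2^{k-n})^2 / \ldots\big)$ via Cauchy–Schwarz with weights $2^{\pm(k-n)/2}$: $\sum_{k\le n}\min(b_k,2^{k-n}) = \sum_{k\le n} 2^{-(n-k)/4}\cdot 2^{(n-k)/4}\min(b_k,2^{k-n}) \le (\sum_k 2^{-(n-k)/2})^{1/2}(\sum_{k\le n} 2^{(n-k)/2}\min(b_k,2^{k-n})^2)^{1/2}$, and $\min(b_k,2^{k-n})^2 \le b_k 2^{k-n}$, so the second factor is $\le (\sum_{k\le n} 2^{(n-k)/2} b_k 2^{k-n})^{1/2} = (\sum_{k\le n} 2^{(k-n)/2} b_k)^{1/2}$, giving $(\E Y_n^2)^2 \le L\sum_{k\le n} 2^{(k-n)/2} b_k$, and summing over $n$ and interchanging: $\sum_n(\E Y_n^2)^2 \le L\sum_k b_k \sum_{n\ge k} 2^{(k-n)/2} \le L'\sum_k b_k \le L'\E Y^2 < \infty$. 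This is the argument I would present.

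\begin{proof}
Since the sets $C_k$ are pairwise disjoint, for each $n$ we have
\begin{align*}
\E Y_n^2 = \E\big(Y^2 \Ind{B_n}\big) = \sum_{k \in K(n)} \E\big(Y^2 \Ind{C_k}\big) \le \sum_{k=0}^{n} b_k \wedge 2^{k-n},
\end{align*}
where $b_k := \E\big(Y^2\Ind{C_k}\big) \ge 0$; note that $\sum_{k=0}^\infty b_k = \E\big(Y^2 \Ind{\bigcup_k C_k}\big) \le \E Y^2 < \infty$.

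By the Cauchy--Schwarz inequality, using the weights $2^{-(n-k)/4}$,
\begin{align*}
\E Y_n^2 &\le \sum_{k=0}^n 2^{-(n-k)/4}\cdot 2^{(n-k)/4}\big(b_k \wedge 2^{k-n}\big)\\
&\le \Big(\sum_{k=0}^n 2^{-(n-k)/2}\Big)^{1/2}\Big(\sum_{k=0}^n 2^{(n-k)/2}\big(b_k \wedge 2^{k-n}\big)^2\Big)^{1/2}.
\end{align*}
Since $\sum_{k=0}^n 2^{-(n-k)/2} = \sum_{j=0}^n 2^{-j/2} \le L$ and $\big(b_k \wedge 2^{k-n}\big)^2 \le b_k\, 2^{k-n}$, the second factor satisfies
\begin{align*}
\sum_{k=0}^n 2^{(n-k)/2}\big(b_k \wedge 2^{k-n}\big)^2 \le \sum_{k=0}^n 2^{(n-k)/2}\, b_k\, 2^{k-n} = \sum_{k=0}^n 2^{(k-n)/2}\, b_k.
\end{align*}
Hence $\big(\E Y_n^2\big)^2 \le L \sum_{k=0}^n 2^{(k-n)/2}\, b_k$. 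Summing over $n$ and interchanging the order of summation,
\begin{align*}
\sum_{n=0}^\infty \big(\E Y_n^2\big)^2 \le L \sum_{n=0}^\infty \sum_{k=0}^n 2^{(k-n)/2}\, b_k = L \sum_{k=0}^\infty b_k \sum_{n=k}^\infty 2^{(k-n)/2} = L \sum_{k=0}^\infty b_k \sum_{j=0}^\infty 2^{-j/2} \le L' \sum_{k=0}^\infty b_k \le L' \E Y^2 < \infty.
\end{align*}
\end{proof}
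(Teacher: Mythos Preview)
Your final proof is correct and follows essentially the same route as the paper: both apply Cauchy--Schwarz with geometric weights $2^{\pm(n-k)/2}$ to the sum $\sum_{k} \E(Y^2\Ind{C_k})$, then interchange the order of summation in $n$ and $k$. The only cosmetic difference is that you first relax the sum to $\sum_{k\le n} b_k\wedge 2^{k-n}$ and use $(b_k\wedge 2^{k-n})^2\le b_k\,2^{k-n}$, whereas the paper keeps the restriction $k\in K(n)$ and uses $b_k^2\le b_k\,2^{k-n}$ there, together with $\sum_{n:\,k\in K(n)}2^{n-k}\le 2/b_k$; the net effect is the same bound $\sum_n(\E Y_n^2)^2\le L\,\E Y^2$.
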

\begin{proof}
Let us first notice that by the Schwarz inequality, we have
\begin{align*}
\Big(\sum_{k\in K(n)}\E (Y^2\Ind{C_k})\Big)^2 &= \Big((\sum_{k\in K(n)}2^{(n-k)/2}2^{(k-n)/2} \E (Y^2\Ind{C_k})\Big)^2\\
&\le\sum_{k\in K(n)}[2^{n-k} (\E (Y^2\Ind{C_k}))^2]\sum_{k\le n}2^{k-n} \\
&= 2\sum_{k\in K(n)}[2^{n-k} (\E (Y^2\Ind{C_k}))^2].
\end{align*}
Thus
\begin{align*}
\sum_{n}
(\E Y_n^2)^2&\le \sum_n 2\sum_{k\in K(n)}[2^{n-k} (\E (Y^2\Ind{C_k}))^2]\\
&\le 2 \sum_{k\colon \E (Y^2\Ind{C_k})> 0} (\E (Y^2\Ind{C_k}))^2\sum_{n\colon k\in K(n)}2^{n-k}\\
&\le 4\sum_{k\colon \E (Y^2\Ind{C_k})> 0} (\E (Y^2\Ind{C_k}))^2\max_{n\colon k\in K(n)}2^{n-k}\\
&\le4\sum_{k\colon \E (Y^2\Ind{C_k})> 0} (\E (Y^2\Ind{C_k}))^2\frac{1}{\E (Y^2\Ind{C_k})}\\
&\le4 \sum_k \E (Y^2\Ind{C_k})= 4 \E Y^2 < \infty.
\end{align*}
\end{proof}

\begin{proof}[Proof of sufficiency]
The proof consists of several truncation arguments.
The first part of it follows the proofs presented in \cite{GLKZ}
and \cite{AmcLat} for the real-valued case. Then some modifications
are required, reflecting the diminished integrability condition in the
Hilbert space case. At each step we will show that
\begin{equation}
\label{basic_series} \sum_{n=1}^\infty\p\big(\big|\sum_{|\ii|\le
2^n} \pi_d h_n(\xdi)\big| \ge C2^{nd/2}\log^{d/2} n\big) < \infty,
\end{equation}
with $h_n = h\Ind{A_n}$ for some sequence of sets $A_n$.
 In the whole proof we keep the notation $H = |h|/(\llog |h|)^{d/2}$.

Let us also fix $\eta_d \in(0,1)$, such that the following
implication holds
\begin{align}\label{implication}
\forall_{n=1,2,\ldots}\;|h|^2 \le \eta_d^2 2^{nd}\log^d n \implies H^2 \le 2^{nd}.
\end{align}
%----------------------------------------------------------------------------------------------
\paragraph{\textbf{Step 1}} Inequality (\ref{basic_series}) holds for any $C > 0$ if

\begin{displaymath}
A_n \subseteq \big\{x\colon |h(x)|^2 \ge \eta_d^2 2^{nd}\log^d n\big\}.
\end{displaymath}

\noindent We have, by the Chebyshev inequality and the inequality
$\E|\pi_d h_n| \le 2^d\E|h_n|$ (which follows directly from the
definition of $\pi_d$ or may be considered a trivial case of Lemma
\ref{moments_comparison}),

\begin{align*}
\sum_n \p\big(\big|\sum_{|\ii| \le 2^n}& \pi_d h_n(\xdi)\big|\ge C 2^{nd/2}\log^{d/2} n\big) \\
&\le \sum_{n=1}^\infty \frac{\E\big|\sum_{|\ii| \le 2^n} \pi_d h_n(\xdi)\big|}{C2^{nd/2}\log^{d/2}n}\\
&\le 2^d\sum_n \frac{2^{nd}\E|h|\ind{|h| > \eta_d 2^{nd/2}\log^{d/2}n}}{C2^{nd/2}\log^{d/2}n} \\
& =2^dC^{-1} \E\Big(|h| \sum_n \frac{2^{nd/2}}{\log^{d/2}n}\ind{|h| > \eta_d 2^{nd/2}\log^{d/2}n}\Big) \\
&\le L_dC^{-1}\E\frac{|h|^2}{(\llog |h|)^d}< \infty.
\end{align*}

%----------------------------------------------------------------------------------------------
\paragraph{\textbf{Step 2}}  Inequality (\ref{basic_series}) holds for any $C > 0$ if
\begin{displaymath}
A_n \subseteq \big\{x  \colon |h(x)|^2 \le
\eta_d^2 2^{nd}\log^d n, \,\exists_{I\neq\emptyset,I_d}
\;\E_I H^2 \ge 2^{\#I^cn}\big\}.
\end{displaymath}

As in the previous step, it is enough to prove that
\begin{displaymath}
\sum_{n=1}^\infty \frac{\E\big|\sum_{|\ii|\le 2^n} \edi
h_n(\xdi)\big|}{2^{nd/2}\log^{d/2} n} < \infty.
\end{displaymath}

The set $A_n$ can be written as
\begin{displaymath}
A_n = \bigcup_{I\subseteq I_d, I\neq I_d,\emptyset} A_n(I),
\end{displaymath}
where the sets $A_n(I)$ are pairwise disjoint and
\begin{displaymath}
A_{n}(I) \subseteq \{x\colon |h(x)|^2\leq 2^{2nd},\; \E_{I}H^2\geq
2^{\#I^{c}n}\}.
\end{displaymath}
Therefore it suffices to prove that
\begin{equation}\label{modified_second_step}
\sum_{n=1}^\infty \frac{\E\big|\sum_{|\ii|\le 2^n} \edi
h(\xdi)\Ind{A_n(I)}(\xdi)\big|}{2^{nd/2}\log^{d/2} n} < \infty.
\end{equation}
Let for $l\in \N$,
\begin{align*}
A_{n,l}(I):=\{&x\colon |h(x)|^2\leq 2^{2nd},\\
&2^{2l+2+\#I^{c}n}> \E_{I}H^2 \geq 2^{2l+\#I^{c}n}\}\cap A_n(I).
\end{align*}
Then $h_{n}\Ind{A_n(I)}=\sum_{l=0}^{\infty}h_{n,l}$, where
$h_{n,l}:=h_{n}\Ind{A_{n,l}(I)}$ (notice that the sum is actually
finite in each point $x\in \Sigma^d$ as for large $l$, $x \notin
A_{n,l}(I)$).

We have
\begin{align*}
\E|\sum_{|\ii|\leq 2^{n}}\edi h_{n,l}(\xdi)|&\leq
\sum_{|\ii_{I^c}|\le 2^n}\E_{I^{c}}\E_I|\sum_{|\ii_{I}|\leq
2^{n}}\mathbf{\epsilon}_{\ii_I}^{\rm{dec}} h_{n,l}(\xdi)|
\\
&\leq \sum_{|\ii_{I^c}|\le 2^n}\E_{I^{c}}
(\E_{I}|\sum_{|\ii_{I}|\leq 2^{n}}\epsilon^{\rm dec}_{\ii_{I}}
h_{n,l}(\xdi)|^{2})^{1/2}
\\
&\leq 2^{(\#I^{c}+\#I/2)n}\E_{I^{c}}(\E_{I}|h_{n,l}|^{2})^{1/2}
\\
&\leq L_d [2^{(\#I^{c}+d/2)n+l+1}\log^{d/2}n]\p_{I^c}(\E_{I}H^2\geq 2^{2l+\#I^{c}n}),
\end{align*}
where in the last inequality we used the estimate
\begin{align*}
\E_I h_{n,l}^2 \le& L_d\E_I[ (\log n)^d H^2\ind{2^{2l+2+\#I^{c}n}> \E_{I}H^2 \geq 2^{2l+\#I^{c}n}}]\\
\le& L_d 2^{2l+2+\#I^c n}(\log n) ^d \ind{\E_{I}H^2 \geq 2^{2l+\#I^{c}n}}.
\end{align*}
Therefore to get (\ref{modified_second_step}) it is enough to show
that
\begin{displaymath}
\sum_{l=0}^{\infty}\sum_{n}2^{l+\#I^{c}n}
\p_{I^c}(\E_{I}H^2 \geq 2^{2l+\#I^{c}n})<\infty.
\end{displaymath}
But this is just the statement of Lemma \ref{szereg}.
%--------------------------------------------------------------------------------------------------------

\paragraph{\textbf{Step 3}} Inequality (\ref{basic_series}) holds for any $C > 0$ if
\begin{align*}
A_n \subseteq \big\{&x \colon |h(x)|^2 \le \eta_d^2
2^{nd}\log^d n,\; \forall_{I\neq \emptyset,I_d} \;\E_I H^2 \le
2^{\#I^cn}\} \cap \bigcup_{I\subsetneq I_d }B_n^I,
\end{align*}
with $B_n^I = \bigcup_{k\in K(I,n)}C_{k}^I$ and
$C_0^I = \{x\colon \E_I H^2 \le 1\}$, $C_k^I = \{x\colon 2^{\#I^c(k-1)}< \E_I H^2 \le 2^{\#I^ck}\}$, $k\ge 1$,
$K(I,n) = \{k \le n\colon\E (H^2 \Ind{C_k^I})
\le 2^{k-n}\}$.

\noindent By Lemma \ref{moments_comparison} and the Chebyshev
inequality, it is enough to show that
\begin{displaymath}
\sum_n \frac{\E|\sum_{|\ii| \le 2^n}\edi
h_n(\xdi)|^4}{2^{2nd}\log^{2d} n} < \infty.
\end{displaymath}
The Khintchine inequality for Rademacher chaoses gives
\begin{align*}
L_d^{-1}\E|\sum_{|\ii| \le 2^n} \edi h_n(\xdi)|^4 &
\le \E(\sum_{|\ii| \le 2^n}|h_n(\xdi)|^2)^2 \\
&= \sum_{I \subseteq I_d} \sum_{|\ii| \le 2^n}
\sum_{\stackrel{|\jj| \le 2^n\colon}{\{k\colon i_k = j_k\}=I}}\E |[h_n(\xdi)|^2 |h_n(\mathbf{X_\jj^{dec}})|^2] \\
&\le \sum_{I \subseteq I_d} 2^{nd}2^{n(d-\#I)} \E[
|h_n(\mathbf{X})|^2 \cdot |h_n(\mathbf{\tilde{X}}(I))|^2],
\end{align*}
where $\mathbf{X} = (X_1,\ldots,X_d)$ and $\mathbf{\tilde{X}}(I) =
((X_i)_{i\in I}, (X^{(1)}_i)_{i \in I^c})$.

To prove the statement of this step it thus suffices to show that
for all $I\subseteq I_d$,
\begin{align}\label{repeated_argument}
S(I) := \sum_n \frac{2^{-n\#I}}{\log^{2d} n}\E[
|h_n(X)|^2|h_n(\tilde{X}(I))|^2] < \infty.
\end{align}
The case of nonempty $I$ follows from Lemma
\ref{upper_bounds_only}. It thus remains to consider the case
$I=\emptyset$. Set $H_I^2 = \E_I H^2$. We have
\begin{align*}
S(\emptyset) &= \sum_n \frac{(\E |h_n|^2)^2}{\log^{2d}n}= \sum_n (\E
(\frac{|h|^2}{\log^{d}n}\Ind{A_n}))^2 \le L_d \sum_n
(\E (H^2\Ind{A_n}))^2\\
&\le L_d \sum_n
(\E (H^2\sum_{I\subsetneq I_d} \Ind{B^I_n}))^2
\le \tilde{L}_d \sum_{I\subsetneq I_d} \sum_n(\E (H^2 \Ind{B^I_n}))^2\\
&=\tilde{L}_d \sum_{I\subsetneq I_d} \sum_n (\E (H_I^2\Ind{B_n^I}))^2 <
\infty
\end{align*}
by Lemma \ref{l4sequence3}, applied for $Y^2=\E_I H^2$, since $\E H_I^2 =
\E H^2< \infty$.
%-------------------------------------------------------------------------------

\paragraph{\textbf{Step 4}}
Inequality (\ref{basic_series}) holds for some $C \le L_d D$ if
\begin{align*}
A_n = \big\{&x \colon |h(x)|^2 \le \eta_d^2 2^{nd}\log^d n,\;
\forall_{I\neq \emptyset,I_d} \E_I H^2 \le 2^{\#I^cn}\} \cap \bigcap_{I\subsetneq I_d} (B_n^I)^c,
\end{align*}
where $B_n^I$ is defined as in the previous step.

Let us first estimate $\|(E_I |h_n|^2)^{1/2}\|_\infty$ for $I \subsetneq I_d$.
We have
\begin{align*}
\E_I |h_n|^2 &\le \E_I \Big[|h|^2\ind{|h|^2\le \eta_d 2^{nd}\log^d n}\sum_{k\le n, k \notin K(I,n)} \Ind{C_k^I}\Big]\\
&\le L_d \log^d n  \sum_{k\le n, k \notin K(I,n)} \Ind{C_k^I}\E_I H^2.
\end{align*}
The fact that we can restrict the summation to $k\le n$ follows directly from the definition
of $A_n$ for $I \neq \emptyset$ and for $I =\emptyset$ from (\ref{implication}).

The sets $C_k^I$ are pairwise disjoint and thus
\begin{align}\label{l_infinity}
\|\E_I |h_n|^2\|_\infty \le (L_d \log^d n)\max_{k\le n, k\notin K(I,n)} 2^{\#I^c k}
= L_d  2^{\#I^c \kimax(n)}\log^d n,
\end{align}
where
\begin{displaymath}
\kimax(n) = \max\{k\le n\colon k \notin K(I,n)\}.
\end{displaymath}
Therefore for $C > 0$,
\begin{align*}
&\sum_{n}
\exp\Big[-\frac{1}{L_d}\Big(\frac{C2^{nd/2}\log^{d/2} n}{2^{\#In/2}\|(\E_I |h_n|^2)^{1/2}\|_\infty}\Big)^{2/(d+\#I^c)}\Big]\\
&\le \sum_n\sum_{k\le n,k\notin K(I,n)}
\exp\Big[-\frac{1}{L_d}\Big(\frac{C2^{nd/2}\log^{d/2} n}{2^{\#In/2}2^{\#I^c k/2}\log^{d/2}n}\Big)^{2/(d+\#I^c)}\Big]\\
&= \sum_k \sum_{n\ge k,\; k \notin K(I,n)}
\exp\Big[-\frac{1}{\tilde{L}_d}\Big(C2^{\#I^c(n-k)/2}\Big)^{2/(d+\#I^c)}\Big].
\end{align*}

Notice that for each $k$ the inner series is bounded by a geometric series with the ratio smaller than some
$q_{d,C} < 1$ ($q_{d,C}$ depending only on $d$ and $C$). Therefore the right hand side of the above inequality is bounded by
\begin{align*}
K\sum_k \sup_{n\ge k,\; k \notin
K(I,n)}\exp\Big[-\frac{1}{\tilde{L}_d}\Big(C2^{\#I^c(n-k)/2}\Big)^{2/(d+\#I^c)}\Big],
\end{align*}
with the convention $\sup \emptyset = 0$. But $k \notin K(I,n)$
implies that $2^{\#I^c(n-k)/2} \ge (\E(
H^2\Ind{C^I_k}))^{-\#I^c/2}$. Therefore the above quantity is
further bounded by
\begin{align*}
K \sum_k \exp\Big[-\frac{1}{\tilde{L}_d}\Big(C^{-2/\#I^c}\E (H^2\Ind{C^I_k})\Big)^{-\#I^c/(d+\#I^c)}\Big] &\le
\bar{L}_d C^{-2/\#I^c}\sum_k \E (H^2\Ind{C_k^I})\\
&= \bar{L}_dC^{-2/\#I^c}\E H^2 < \infty,
\end{align*}
where we used the inequality
$e^x \ge c_dx^\alpha$ for all $x\ge 0$ and $0 \le \alpha \le 2d$.
We have thus proven that for all $I \subsetneq I_d$ and $C,L_d >
0$,
\begin{align}\label{l_infinity_series_estimation}
\sum_{n\colon A_n \neq \emptyset}
\exp\Big[-\frac{1}{L_d}\Big(\frac{C2^{nd/2}\log^{d/2} n}{2^{\#In/2}\|(\E_I |h_n|^2)^{1/2}\|_\infty}\Big)^{2/(d+\#I^c)}\Big] < \infty.
\end{align}

Now we will turn to the estimation of $\|h_n\|_{J_0,\mathcal{J}}$.
Let us consider $J_0\subseteq I_d$, $\mathcal{J} =
\{J_1,\ldots,J_l\} \in \mathcal{P}_{I_d\backslash J_0}$ and denote
as before $X = (X_1,\ldots,X_d)$, $X_{I} = (X_i)_{i\in I}$.
Recall that

\begin{align*}
\|h_n\|_{J_0,\mathcal{J}} = \sup\big\{&\E \langle h_n(X),
f_0(\xdjz)\rangle\prod_{i=1}^l f_i (X_{J_i}) \colon
\E|f_0(\xdjz)|^2\le
1,\\
&\E f_i^2(X_{J_i}) \le 1, \; i\ge 1 \big\}.
\end{align*}

In what follows, to simplify the already quite complicated
notation, let us suppress the arguments of all the functions and
write just $h$ instead of $h(X)$ and $f_i$ instead of
$f_i(X_{J_i})$.

Let us also remark that although $f_0$ plays special role in the
definition of $\|\cdot\|_{J_0,\mathcal{J}}$, in what follows  the
same arguments will apply to all $f_i$'s with the obvious use of
Schwarz inequality for the scalar product in $H$. We will
therefore not distinguish the case $i=0$ and $f_i^2$ will denote
either the usual power or $\langle f_0,f_0\rangle$, whereas
$\|f_i\|_2$ for $i=0$ will be the norm in $L^2(H,X_{J_0})$, which
may happen to be equal just $H$ if $J_0 = \emptyset$.

Since $\E |f_i|^2 \le 1$, $i=0,\ldots,l$, then for each $j =0,\ldots,l$ and $J \subsetneq J_j$ by the Schwarz inequality
applied conditionally to $X_{J_j\backslash J}$

\begin{align*}
\E |\langle &h_n,f_0\rangle \prod_{i=1}^l f_i\ind{\E_J f_j^2 >
a^2}| \\
&\le \E_{J_j\backslash J}
\big[(\E_{(J_j\backslash J)^c} \prod_{i=0}^l f_i^2)^{1/2}\ind{\E_J f_j^2 \ge a^2} (\E_{(J_j\backslash J)^c} |h_n|^2)^{1/2}\big]\\
&\le\E_{J_j\backslash J}\big[(\E_J f_j^2)^{1/2}\ind{\E_J f_j^2 \ge a^2} (\E_{(J_j\backslash J)^c} |h_n|^2)^{1/2} \big]\\
&\le L_d 2^{\kijmax(n)\#(J_j\backslash J)/2}\log^{d/2} n\E_{J_j\backslash J}[(\E_J f_j^2)^{1/2}\ind{\E_J f_j^2 \ge a^2}] \\
&\le L_d [2^{\kijmax(n)\#(J_j \backslash J)/2}\log^{d/2} n]a^{-1},
\end{align*}
where the third inequality follows from (\ref{l_infinity}) and the last one from the elementary fact $\E|X|\ind{|X|\ge a} \le a^{-1}\E|X|^2$.
This way we obtain
\begin{align}\label{truncation_in_J_norms}
&\|h_n \|_{J_0,\mathcal{J}} \\
&\le \sup\{\E[ \langle h_n,
f_0\rangle\prod_{i=1}^l f_i] \colon \|f_i\|_2 \le 1, \forall_{J \subsetneq J_i}\;\|(\E_J
f_i^2)^{1/2}\|_\infty
\le 2^{n\#(J_i\backslash J)/2} \}\nonumber\\
&+ L_d\sum_{i=0}^l \sum_{J\subsetneq J_i}2^{(\kiimax(n)-n)\#(J_i\backslash J)/2}\log^{d/2} n\nonumber \\
&\le \sup\{\E [\langle h_n,f_0\rangle\prod_{i=1}^l f_i] \colon
\|f_i\|_2 \le 1, \forall_{J \subsetneq J_i}\; \|(\E_J f_i^2)^{1/2}\|_\infty \le
2^{n\#(J_i\backslash J)/2} \}\nonumber\\
&+ L_d\sum_{I\subsetneq I_d}2^{(\kimax(n)-n)\#I^c/2}\log^{d/2}n \nonumber.
\end{align}

Let us thus consider arbitrary $f_i$, $i=0,\ldots,k$ such that
$\|f_i\|_2 \le 1$, $\|(\E_J f_i^2)^{1/2}\|_\infty \le
2^{n\#(J_i\backslash J)/2}$ for all $J \subsetneq J_i$ (note
that the latter condition means in particular that $\|f_i\|_\infty
\le 2^{n\#J_i/2}$).

We have by assumption (\ref{nasc}) for sufficiently large $n$,
\begin{equation*}
|\E [\langle h, f_0\rangle\prod_{i=1}^k f_i]| \le
\|h\|_{K,\mathcal{J},2^{nd/2}} \le L_d D\log^{(d - \deg\mathcal{J})/2}
n.
\end{equation*}

We have also
\begin{align*}
\E|\langle h,f_0\rangle\ind{|h|^2 \ge
\eta_d2^{nd}\log^{d}n}\prod_{i=1}^kf_i|
&\le \E[|h|\ind{|h|^2 \ge \eta_d 2^{nd}\log^d n}]\prod_{i=0}^k\|f_i\|_\infty \\
&\le 2^{nd/2}\E[|h|\ind{|h|^2 \ge \eta_d 2^{nd}\log^{d}n}] =: \alpha_n.
\end{align*}

Also for $I \subseteq I_d, I\neq \emptyset, I_d$, denoting
$\tilde{h}_n = h\ind{|h|^2\le \eta_d 2^{nd}\log^{d}n}$, we get
\begin{align*}
\E|\langle \tilde{h}_n, &f_0\rangle\prod_{i=1}^k f_i \ind{\E_I
H^2 \ge 2^{n\#I^c}}|\\
&\le \E_{I^c}\big[(\E_I |\tilde{h}_n|^2)^{1/2}\ind{\E_I H^2 \ge 2^{n\#I^c}}\prod_{i=0}^k (\E_{J_i \cap I} |f_i|^2)^{1/2}\big]\nonumber\\
&\le [\prod_{i=0}^l 2^{n\#(J_i\cap I^c)/2}]
\E_{I^c}[(\E_I |\tilde{h}_n|^2)^{1/2}\ind{\E_I H^2 \ge 2^{n\#I^c}}]\nonumber\\
&\le L_d 2^{n\#I^c/2} \E_{I^c}[(\E_I H^2\log^d n)^{1/2}\ind{\E_I H^2 \ge 2^{n\#I^c}}]\\
&\le L_d [2^{n\#I^c/2}\log^{d/2} n]\E_{I^c}[(\E_I H^2)^{1/2}\ind{\E_I H^2 \ge 2^{n\#I^c}}]=:\beta^I_n.
\end{align*}

Let us denote $\bar{h}_n = \tilde{h}_n\prod_{\emptyset\neq I\subsetneq I_d}\ind{\E_I H^2 \le 2^{\#I^c n}}$ and
$\gamma_n^I = \E|\bar{h}_n\Ind{B^I_n}|^2$.
Combining the three last inequalities we obtain
\begin{align*}
|\E \langle h_n,f_0\rangle \prod_{i=1}^l f_i| \le& |\E \langle h,f_0\rangle \prod_{i=0}^l f_i|
+ |\E \langle h_n\Ind{A_n^c},f_0 \rangle \prod_{i=1}^l f_i|\\
\le& L_dD\log^{(d-\deg\mathcal{J})/2}n + \E|\langle h\ind{|h|^2\ge 2^{nd}\log^{d}n},f_0\rangle\prod_{i=1}^lf_i|\nonumber\\
&+\sum_{\emptyset \neq I\subsetneq I_d}\E|\langle \tilde{h}_n\ind{\E_I H^2 \ge 2^{n\#I^c}}, f_0\rangle \prod_{i=1}^l f_i|\nonumber\\
&+ \sum_{I\subsetneq I_d}\E|\langle\bar{h}_n\Ind{B^I_n},f_0\rangle\prod_{i=1}^l f_i|\nonumber\\
\le& L_dD\log^{(d-\deg\mathcal{J})/2}n + \alpha_n + \sum_{\emptyset\neq I\subsetneq I_d}\beta_n^I +
\sum_{I\subsetneq I_d}\sqrt{\gamma_n^I}\nonumber.
\end{align*}

Now, combining the above estimate with (\ref{truncation_in_J_norms}), we obtain
\begin{align}\label{J_norm_estimate_nowe}
\|h_n\|_{J_0,\mathcal{J}} &\le L_d\sum_{I\subsetneq I_d} 2^{(\kimax(n)-n)\#I^c/2}\log^{d/2}n+
L_dD\log^{(d-\deg\mathcal{J})/2}n\\
& + \alpha_n + \sum_{\emptyset\neq I\subsetneq I_d}\beta_n^I +
\sum_{I\subsetneq I_d}\sqrt{\gamma_n^I}.\nonumber
\end{align}

Let us notice that
\begin{align}\label{alphabetagammaseries}
\sum_n \frac{\alpha_n}{\log^{d/2} n} &< \infty, \nonumber\\
\forall_{I\neq\emptyset, I_d}\sum_n \frac{\beta_n^I}{\log^{d/2} n} & < \infty,\\
\forall_{I\neq\emptyset, I_d} \nonumber \sum_n \frac{(\gamma_n^I)^2}{\log^{2d} n} &< \infty.
\end{align}

The first inequality was proved in Step 1. The proof of the second one is straightforward. Indeed, we have
\begin{align*}
\sum_n \frac{\beta_n^I}{\log^{d/2} n} &= L_d\E_{I^c} [(\E_I H^2)^{1/2}\sum_n 2^{n\#I^c/2}\ind{\E_I H^2 \ge 2^{n\#I^c}}]\\
&\le \tilde{L}_d \E_{I^c} \E_I H^2 = \tilde{L}_d \E H^2 < \infty.
\end{align*}
The third inequality is implicitly proved in Step 3. Let us however present an explicit argument.
\begin{align*}
\sum_n \frac{(\gamma_n^I)^2}{\log^{2d} n} & \le \sum_n \Bigg(\E\frac{|h|^2\ind{|h|\le \eta_d 2^{nd/2}\log^{d/2}n}\Ind{B_n^I}}{\log^d n}\Bigg)^2\\
&\le L_d \sum_n (\E_{I^c} \E_I (H^2\Ind{B_n^I}))^2 < \infty
\end{align*}
by Lemma \ref{l4sequence3} applied to the random variable
$\sqrt{\E_I H^2}$.

We are now in position to finish the proof. Let us notice that we have either
$\E( |h|^2\ind{|h|^2\le 2^{2nd}}) \le 1$, or we can use the function
\begin{displaymath}
g = \frac{h\ind{\|h|^2\le 2^{2nd}}}{(\E (|h|^2\ind{|h|^2\le 2^{2nd}}))^{1/2}}
\end{displaymath}
as a test function in the definition of $\|h\|_{I_d,\emptyset,2^{nd}}$, obtaining
\begin{displaymath}
(\E (|h|^2\ind{|h|^2 \le 2^{2nd}}))^{1/2} = \E\langle h,g\rangle \le \|h\|_{I_d,\emptyset,2^{nd}} <D \log^d n
\end{displaymath}
for large $n$. Combining this estimate with
Corollary \ref{tail_Hoeffding},
 we can now write
\begin{align}\label{final_series}
&\sum_{n}\p\big(|\sum_{|\ii|\le 2^n} \pi_d h_n(\xdi)| \ge \tilde{L}_d(D+C)2^{nd/2}\log^{d/2}n\big)\\
&\le \tilde{L}_d \sum_{J_0\subsetneq I_d}\sum_{\mathcal{J}\in
\mathcal{P}_{I_d\backslash J_0}}\sum_n
\exp\Big[-\frac{1}{\tilde{L}_d}\Big(\frac{C2^{nd/2}\log^{d/2}n}{2^{nd/2}\|h_n\|_{J_o,\mathcal{J}}}\Big)^{2/\deg\mathcal{J}}\Big]\nonumber\\
&+ \tilde{L}_d\sum_{I\subsetneq I_d}\sum_n
\exp\Big[-\frac{1}{\tilde{L}_d}
\Big(\frac{C2^{nd/2}\log^{d/2}n}{2^{n\#I/2}\|(\E_I|h_n|^2)^{1/2}\|_\infty}\Big)^{2/(d+\#I^c)}\Big]\nonumber.
\end{align}
The second series is convergent by (\ref{l_infinity_series_estimation}).

Thus it remains to prove the convergence
of the first series. By  (\ref{J_norm_estimate_nowe}), we have for all $J_0, \mathcal{J}$
\begin{align*}
&\exp\Big[-\frac{1}{\tilde{L}_d}\Big(\frac{C\log^{d/2}n}{\|h_n\|_{J_o,\mathcal{J}}}\Big)^{2/\deg\mathcal{J}}\Big]\\
&\le \sum_{I\subsetneq I_d}\exp\Big[-\frac{1}{L_d}\Big(\frac{C\log^{d/2}n}{2^{(\kimax(n)-n)\#I^c/2}\log^{d/2}n}\Big)^{2/\deg\mathcal{J}}\Big]\\
&+ \exp\Big[-\frac{1}{L_d}\Big(\frac{C\log^{d/2}n}{D\log^{(d-\deg\mathcal{J})/2}n}\Big)^{2/\deg\mathcal{J}}\Big]
+ \exp\Big[-\frac{1}{L_d}\Big(\frac{C\log^{d/2}n}{\alpha_n}\Big)^{2/\deg\mathcal{J}}\Big]\\
&+\exp\Big[-\frac{1}{L_d}\Big(\frac{C\log^{d/2}n}{\sum_{\emptyset\neq I\subsetneq I_d}\beta_n^I}\Big)^{2/\deg\mathcal{J}}\Big]
+\exp\Big[-\frac{1}{L_d}\Big(\frac{C\log^{d/2}n}{\sum_{I\subsetneq I_d}\sqrt{\gamma_n^I}}\Big)^{2/\deg\mathcal{J}}\Big],
\end{align*}
(under our permanent convention that the values of $L_d$ in different equations need not be the same).
The series determined by the three last components at the right-hand side are convergent by (\ref{alphabetagammaseries})
since $e^{-x} \le L_r x^{-r}$ for $r> 0$. The series corresponding to the second component is convergent for
$C$ large enough and we can take $C = L_d D$. As for the series corresponding to the first term, we have, just as in
the proof of (\ref{l_infinity_series_estimation}) for any $I\subsetneq I_d$,
\begin{align*}
&\sum_n \exp\Big[-\frac{1}{L_d}\Big(\frac{C\log^{d/2}n}{L_d2^{(\kimax(n)-n)\#I^c/2}\log^{d/2}n}\Big)^{2/\deg\mathcal{J}}\Big]\\
&\le \sum_k\sum_{n\ge k, k\notin K(I,n)} \exp\Big[-\frac{1}{\tilde{L_d}}\Big(C2^{(n-k)\#I^c/2}\Big)^{2/\deg\mathcal{J}}\Big]\\
& \le K\sum_k \sup_{n\ge k, k\notin K(I,n)}\exp\Big[-\frac{1}{\tilde{L}_d}\Big(C2^{(n-k)\#I^c/2}\Big)^{2/\deg\mathcal{J}}\Big]\\
&\le \bar{K}\sum_k \E( H^2\Ind{C_k^I}) = \bar{K}\E H^2 < \infty.
\end{align*}
We have thus proven the convergence of the series at the left-hand side of (\ref{final_series}) with $C \le L_dD$,
which ends Step 5.

\paragraph{}
Now to finish the proof, we just split $\Sigma^d$ for each $n$ into four sets, described by steps 1--4 and use the triangle inequality, to show
that
\begin{displaymath}
\sum_n \p\Big(\Big|\sum_{|\ii|\le 2^n} h(\xdi)| \ge L_d D 2^{nd/2}\log^{d/2} n\Big|\Big) < \infty,
\end{displaymath}
which proves the sufficiency part of the theorem by Corollary \ref{from_series_to_lil}.
\end{proof}

\section{The undecoupled case}
\begin{theorem}\label{undecoupled_lil_theorem}
For any function $h\colon \Sigma^d \to H$ and a sequence $X_1,X_2,\ldots$ of $i.i.d.$, $\Sigma$-valued random
variables, the LIL (\ref{lil}) holds if and only if $h$
\begin{displaymath}
\E\frac{|h|^2}{(\llog |h|)^d} < \infty,
\end{displaymath}
$h$ is completely degenerate for the law of $X_1$ and the growth conditions
(\ref{nasc}) are satisfied.

More precisely, if (\ref{lil}) holds, then (\ref{nasc}) is satisfied with $D = L_d C$ and
conversely, (\ref{nasc}) together with complete degeneration and the integrability condition imply (\ref{lil}) with $C = L_d D$.
\end{theorem}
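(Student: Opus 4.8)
The plan is to reduce Theorem \ref{undecoupled_lil_theorem} to Theorem \ref{decoupled_lil_theorem} together with the bridge results of Section \ref{LIL_statements_equiv}, the only genuinely new point being that the LIL forces the kernel to be completely degenerate. We may and do assume throughout that $h$ is symmetric, since the undecoupled $U$-statistic $\sum_{\ii\in\nodiag{n}}h(\xui)$ is unchanged when $h$ is replaced by its symmetrization.

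Once complete degeneration is known, the equivalence is immediate. If $h$ is symmetric and canonical, Corollary \ref{lil_dec_diagLIL} shows that (\ref{lil}) holds for some $C$ if and only if the decoupled LIL with diagonal (\ref{decoupled_diagonal_lil}) -- which is verbatim (\ref{LIL_theorem}) -- holds for some $D$, the two constants being related by $L_d$-factors; and Theorem \ref{decoupled_lil_theorem} characterizes (\ref{LIL_theorem}) by the conjunction of (\ref{nasc_integrability}) and (\ref{nasc}), again with constants related by $L_d$-factors. Composing the two equivalences and tracking the constants gives exactly the assertion, in both directions.

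It thus remains to prove that (\ref{lil}) implies (\ref{nasc_integrability}) and complete degeneration. The integrability condition costs nothing: the opening argument in the proof of necessity of Theorem \ref{decoupled_lil_theorem} (hypercontractivity of Rademacher chaoses, Lemma \ref{GineZhang}, Corollary \ref{convergent_series}) never uses that $h$ is canonical, so (\ref{lil}) already yields $\E|h|^2/(\llog|h|)^d<\infty$, and in particular $\E|h|<\infty$. For the degeneration I would use the Hoeffding decomposition $h=\sum_{k=0}^{d}h^{(k)}$ into canonical projections and the resulting identity $\sum_{\ii\in\nodiag{n}}h(\xui)=\sum_{k=0}^{d}a_{n,k}W_n^{(k)}$, where $a_{n,k}=\binom{d}{k}(n-k)(n-k-1)\cdots(n-d+1)\asymp n^{d-k}$ and $W_n^{(k)}$ is the order-$k$ canonical symmetric $U$-statistic with kernel $h^{(k)}$. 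Suppose $h$ is not canonical and let $k_0\le d-1$ be the smallest $k$ with $h^{(k)}\neq 0$. If $k_0=0$ then $\E h\neq 0$, and the strong law of large numbers for $U$-statistics gives $n^{-d}\sum_{\ii\in\nodiag{n}}h(\xui)\to\E h$ a.s., so the ratio in (\ref{lil}) tends to infinity, a contradiction. If $k_0\ge 1$, fix $\phi\in H$, $|\phi|=1$, with $\E\langle\phi,h^{(k_0)}(X)\rangle^2>0$; by the classical central limit theorem for real degenerate $U$-statistics of order $k_0$ (which needs only square integrability of the kernel, available since $\E|h|^2<\infty$), $n^{-k_0/2}\langle\phi,W_n^{(k_0)}\rangle$ converges in distribution to a nondegenerate limit, whence $\p(|\langle\phi,W_n^{(k_0)}\rangle|\ge\varepsilon_0 n^{k_0/2})\ge p_0$ for all large $n$ and suitable $\varepsilon_0,p_0>0$. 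Using $L^2$-orthogonality of canonical $U$-statistics of distinct orders and the elementary bound $\E|W_n^{(k)}|^2\le L_d n^{k}\E|h|^2$ one obtains $\E\big|\sum_{k>k_0}a_{n,k}W_n^{(k)}\big|^2\le L_d\E|h|^2\,n^{2d-k_0-1}$, so by Chebyshev's inequality this remainder exceeds $\tfrac12 a_{n,k_0}\varepsilon_0 n^{k_0/2}$ only with probability $O(n^{-1})$. Combining the two estimates yields $\p\big(\big|\sum_{\ii\in\nodiag{n}}h(\xui)\big|\ge c_d\varepsilon_0 n^{d-k_0/2}\big)\ge p_0/2$ for all large $n$, whereas (\ref{lil}) forces $\p\big(\big|\sum_{\ii\in\nodiag{n}}h(\xui)\big|\ge C(n\loglog n)^{d/2}\big)\to 0$; since $k_0<d$ we have $n^{d-k_0/2}\gg(n\loglog n)^{d/2}$, a contradiction. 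Hence $h^{(k)}=0$ for every $k<d$, i.e. $h$ is canonical.

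I expect the degeneration step to be the only real obstacle: everything analytically substantial has already been done in Theorem \ref{decoupled_lil_theorem} and Section \ref{LIL_statements_equiv}, and what is left is the bookkeeping of the Hoeffding decomposition of the undecoupled statistic, the elementary scale count $d-k_0/2>d/2$, and the external classical fact that a nonzero canonical kernel produces a nondegenerate $U$-statistic CLT limit, which is precisely what prevents the leading Hoeffding component from being uniformly negligible along any subsequence.
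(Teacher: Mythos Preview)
Your sufficiency argument and your overall strategy of reducing everything to Theorem \ref{decoupled_lil_theorem} through the bridge results of Section \ref{LIL_statements_equiv} are correct and match the paper. However, there is a genuine gap in your degeneration argument.

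You write that the CLT for canonical $U$-statistics of order $k_0$ applies ``since $\E|h|^2<\infty$''. But the integrability condition you have just established is only $\E\,|h|^2/(\llog|h|)^d<\infty$, which is strictly weaker; in general it does not imply $\E|h|^2<\infty$, nor does it imply $\E|h^{(k_0)}|^2<\infty$ for $k_0<d$. (A product kernel $h(x_1,\ldots,x_d)=f(x_1)$ with $\E f^2=\infty$ but $\E f^2/(\llog|f|)^d<\infty$ already shows this.) Without square integrability of $h^{(k_0)}$ the CLT step and the variance bound $\E|W_n^{(k)}|^2\le L_d n^k\E|h|^2$ you invoke for the remainder both fail, so the scale comparison $n^{d-k_0/2}\gg (n\loglog n)^{d/2}$ never gets off the ground.

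The paper circumvents this entirely. For necessity it does \emph{not} first establish canonicality and then apply Corollary \ref{lil_dec_diagLIL}. Instead, from (\ref{lil}) it passes via Lemma \ref{GineZhang} and Corollary \ref{randomized_decoupled_series} to the randomized decoupled LIL for the kernel $\varepsilon_1\cdots\varepsilon_d\,h$, which is canonical automatically, regardless of $h$; Theorem \ref{decoupled_lil_theorem} then delivers (\ref{nasc_integrability}) and (\ref{nasc}) directly, since the $\|\cdot\|_{K,\mathcal{J},u}$ norms of $h$ and of $\varepsilon_1\cdots\varepsilon_d h$ coincide. Degeneracy itself is obtained by projecting: for each $\varphi\in H$, the real kernel $\langle\varphi,h\rangle$ satisfies the real-valued LIL, so the already-known one-dimensional result from \cite{AmcLat} forces $\langle\varphi,h\rangle$ to be canonical; since $\E_i h$ exists in the Bochner sense by (\ref{nasc_integrability}), this gives $\E_i h=0$. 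This route never needs $\E|h|^2<\infty$ and avoids the Hoeffding--CLT bookkeeping altogether.
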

\begin{proof}

Sufficiency follows from Corollary \ref{lil_dec_diagLIL} and Theorem \ref{decoupled_lil_theorem}.
To prove the necessity assume
that (\ref{lil}) holds and observe that from Lemma \ref{GineZhang}
and Corollary \ref{randomized_decoupled_series}, $h$ satisfies the
randomized decoupled LIL (\ref{randomized_decoupled_lil}) and
thus, by Theorem \ref{decoupled_lil_theorem}, (\ref{nasc_integrability}) holds and the growth
conditions (\ref{nasc}) on functions $\|h\|_{K,\mathcal{J},u}$ are satisfied
(note that the $\|\cdot\|_{\mathcal{J},u}$ norms of the kernel
$h(X_1,\ldots,X_d)$ and $\varepsilon_1\cdots\varepsilon_d
h(X_1,\ldots,X_d)$ are equal). The complete degeneracy of $\langle \varphi, h\rangle$ for any $\varphi \in H$
follows from the necessary conditions for real-valued kernels. Since by (\ref{nasc_integrability}), $\E_i h$ is well defined in
the Bochner sense, we must have $\E_i h = 0$.
\end{proof}

\end{document}